\newcommand{\R}{\mathbb{R}}
\newcommand{\E}{{\mathbb E}}
\def\argmin{\mathop{\rm argmin}}
\def\Leb{\mathrm{Leb}}
\def\Unc{\mathrm{Unc}}
\def\Cert{\mathrm{Cert}} 
\def\Unif{\mathrm{Unif}}
\title{Regret Minimization in Isotonic, Heavy-Tailed Contextual Bandits via Adaptive Confidence Bands}
\author{Sabyasachi Chatterjee\thanks{Department of Statistics, UIUC} \and 
  Subhabrata Sen\thanks{Department of Statistics, Harvard
    University, Cambridge, MA 02138, U.S.A.} }
\theoremstyle{plain}\newtheorem{lemma}{\textbf{Lemma}}\newtheorem{theorem}{\textbf{Theorem}}\newtheorem{proposition}{\textbf{Proposition}}
\theoremstyle{definition}
\theoremstyle{definition}\newtheorem{remark}{\textbf{Remark}}
\renewenvironment{proof}[1][\proofname] {
	\par\pushQED{\qed}\normalfont
	\topsep6\p@\@plus6\p@\relax
	\trivlist\item[\hskip\labelsep\bfseries#1\@addpunct{:}]
 	\ignorespaces
} {
	\popQED\endtrivlist\@endpefalse
}
\begin{document}

\maketitle

\begin{abstract}
    In this paper we initiate a study of non parametric contextual bandits under shape constraints on the mean reward function. Specifically, we study a setting where the context is one dimensional, and the mean reward function is isotonic with respect to this context. We propose a policy for this problem and show that it attains minimax rate optimal regret. Moreover, we show that the same policy enjoys automatic adaptation; that is, for subclasses of the parameter space where the true mean reward functions are also piecewise constant with $k$ pieces, this policy remains minimax rate optimal simultaneously for all $k \geq 1.$ Automatic adaptation phenomena are well-known for shape constrained problems in the offline setting;
    we show that such phenomena carry over to the online setting. 
    The main technical ingredient underlying our policy is a procedure to derive confidence bands for an underlying isotonic function using the isotonic quantile estimator. The confidence band we propose is valid under heavy tailed noise, and its average width goes to $0$ at an adaptively optimal rate. We consider this to be an independent contribution to the isotonic regression literature. 
\end{abstract}

\section{Introduction}
\label{sec:introduction}


The Multi-Armed Bandit (MAB) problem \cite{thompson1933likelihood,robbins1952some}  is a widely studied model for sequential decision making. Motivated by diverse applications such as clinical trials and recommendation systems, this problem has been extensively explored across statistics, machine learning, operations research, etc. The MAB problem exhibits the well-known exploration/exploitation tradeoff---the decision maker must balance the desire to maximize reward based on current information with the need to explore alternatives, with the promise of higher future rewards. We refer the interested reader to \cite{slivkins2019introduction,bubeck2012regret} for a textbook introduction. 

Contextual bandits present a generalization of the MAB problem, where the decision-maker observes additional information about the usefulness of each action.  The contextual bandit problem with stochastic contexts is particularly relevant in clinical trial applications, where the context can model the demographic features of each individual. It is natural to believe that upon utilizing these contexts, one could design policies with improved regret guarantees.

From a technical standpoint, it is clear that the intrinsic hardness of the problem is governed by the complexity of the mean reward function  for each arm. The simplest assumption in this regard is to posit a parametric (usually linear) model on these mean reward-functions---we point the interested reader to \cite{slivkins2019introduction} for some seminal results in this setting. In diverse applications, the parametric assumption on the mean-reward might be too restrictive, and a non-parametric approach might be desirable. Nonparametric contextual bandit problems have been studied recently, mostly under smoothness assumptions on the mean-reward functions (we survey these results in detail in Section~\ref{subsec:current art}). Further, the traditional literature on the contextual bandit problem assumes bounded/subgaussian reward distributions for each arm. However, modern applications in finance, e-commerce etc. routinely encounter heavy-tailed data, and this motivates a thorough study of contextual bandits with heavy-tailed reward distributions.

In this paper, we initiate a study of a non-parametric contextual bandits problem with shape-constraints on the mean reward functions and heavy-tailed reward distributions. Specifically, we assume that the mean-reward functions are monotonic. Such an assumption is very common in the clinical trials setting. Suppose the arms represent two competing drugs, and the context represents the age of the patient. In this case, it is natural to assume that the efficacy of the drug would deteriorate with increasing age.

First, we delineate our formal setup. 
Consider a contextual bandit problem with two arms, referred to as $0$ and $1$. At the $i^{th}$ step, we see the context $X_i$, taking values in the  context space $\mathcal{X} = [0,1]$. Depending on the history and the current context, we draw one of the arms $A_i \in \{0,1\}$, and see the reward $Y_i(A_i)$ corresponding to the arm drawn. Throughout, we assume that $\{(X_i, Y_i(0), Y_i(1)): i \geq 1\}$ are iid, and we denote the arm drawn at the $i^{th}$ step as $A_i$. 


For simplicity of exposition, we assume that 
$X_i \sim \Unif(0,1)$ iid random variables.  Our results generalize easily to the setting of non-uniform distributions, as long as the density of contexts is uniformly bounded away from zero. 
%
Set 
\begin{align}
\mathcal{F}_{[0,1]\uparrow} = \{ F:[0,1] \to \mathbb{R}: F \,\textrm{monotone non-decreasing}, 0 \leq F(x) \leq 1\}. \label{def:parameter-space} 
\end{align} 
Let us denote by $\mathcal{D}(\tilde{C},L)$ the set of all possible joint distributions on $(X,Y(0),Y(1))$ which satisfy
\begin{itemize}
    \item[(i)] $X \sim \Unif(0,1)$. 
    \item[(ii)] Fix $F_0, F_1 \in \mathcal{F}_{[0,1]\uparrow}$. We assume 
    \begin{align}
        Y(0) = F_0(X) + \varepsilon, \,\,\,\, Y(1) = F_1(X) + \varepsilon, \nonumber 
    \end{align}
    where $\varepsilon$ satisfies $\mathbb{E}[\varepsilon]=0$, and it's distribution is symmetric around zero. Further, we assume that $\varepsilon$ and $X$ are independent. 
    Thus given $X$, $Y(0)$ and $Y(1)$ have isotonic conditional means $F_0$, $F_1$ respectively, and their distribution is symmetric around the conditional mean functions. Due to symmetry, given $X$, the conditional median functions of $Y(0)$ and $Y(1)$ are also $F_0$, $F_1$ respectively. We emphasize that $\varepsilon$ can be heavy-tailed, and not have any moment beyond it's expectation.

    \item[(iii)] The error distribution $\varepsilon$  satisfies Assumption A \eqref{eq:assum} with parameters $\tilde{C}$ and $L$. 
\end{itemize}


\noindent 
Assumption A pertains to the local growth of $F_0, F_1$ around zero---we will assume throughout that $\tilde{C}$ and $L$ are known to the statistician. Such local growth conditions are standard in the quantile regression literature and are quite mild (see Remark~\ref{rem:growth} for further discussion of this point).  

The set of distributions $\mathcal{D}(\tilde{C},L)$ form the data generating model or the parameter space. In other words, we assume that there is one distribution in $\mathcal{D}(\tilde{C},L)$ which generates the i.i.d sequence $\{ (X_i , Y_i(0), Y_i(1)): i \geq 1\}.$ 
Given any two functions $F_0, F_1$, denote 
\begin{align}
F_*(x) = \max\{ F_0(x), F_1(x) \}. \nonumber 
\end{align}

We wish to design a policy $\pi$ which performs ``optimally". To make sense of this, define $\mathscr{F}_{t} = \sigma( \{X_i, Y_i(A_i) : i \leq t\})$. A policy $\pi$ will refer to a scheme where $A_{t+1}$ is a $\{0,1\}$-valued function measurable with respect to $\sigma( \mathscr{F}_t, X_{t+1})$. We compare any policy to the oracle-optimum policy, which knows the optimal arm for each context. The oracle optimum policy has expected payoff $\sum_{i=1}^{T} \mathbb{E}[F_*(X_i)]$, and thus we define 
\begin{align}
\mathrm{Regret}_T(\pi) = \sum_{i=1}^{T} \mathbb{E} \Big[ F_*(X_i) - Y_i(A_i) \Big]. \nonumber 
\end{align}
\noindent
We design a policy $\pi^*$ which attains the optimal rate for the worst case regret (see Theorem~\ref{thm:upperinform} below for an informal statement). This policy can be thought of as a functional version of the Successive Elimination algorithm which is a standard Multi Armed Bandit algorithm (see Chapter $2$ of Slivkins). We provide a a very brief high level description of the optimal policy below. 
The full details of our policy are given in Section~\ref{sec:policy}.
\begin{enumerate}
	\item[(i)] The policy proceeds in epochs, and at each epoch, it forms confidence bands for the two isotonic conditional mean/median functions $F_0,F_1$.
	
	\item[(ii)] After each epoch, based on the constructed bands, we isolate a part of the context space $\mathcal{X}$ where the bands are non-overlapping. In subsequent epochs, this part becomes a part of the \textit{exploitation} region, and we draw the arm corresponding to the dominant estimate. The complement of this set remains in the \textit{exploration} region. 
\end{enumerate}


\noindent
The two main results about our policy $\pi^*$ are presented below informally. Our first theorem establishes that the policy $\pi^*$ is minimax rate optimal. 
\begin{theorem}[Informal]
\label{thm:upperinform}
We have, 
\begin{align*}
\max_{D \in \mathcal{D}(\tilde{C},L)} \mathrm{Regret}_T(\pi^*) \leq \tilde{O}(T^{2/3}), \nonumber 
\end{align*}
where $\tilde{O}(\cdot)$ hides log-terms in $T$. Moreover, we have 
\begin{equation*}
\min_{\pi} \max_{D \in \mathcal{D}(\tilde{C},L)} \mathrm{Regret}_T(\pi) \geq \Theta(T^{2/3}).
\end{equation*}
\end{theorem}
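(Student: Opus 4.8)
\emph{Proof strategy.} The plan is to prove the two bounds separately; in the body we will state and prove non-asymptotic versions of each (and the adaptive refinement in which $T^{2/3}$ is replaced by the $k$-piecewise-constant rate), but here we only outline the architecture for the worst-case statement.

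\emph{Upper bound.} The policy $\pi^*$ proceeds in $O(\log T)$ geometrically growing epochs, epoch $j$ using $m_j\asymp 2^j$ fresh rounds, so that after epoch $j$ roughly $n_j\asymp 2^j$ samples are available for each arm. At the end of epoch $j$ we form, from the isotonic quantile estimator, confidence bands $[\widehat L^{(a)}_{j},\widehat U^{(a)}_{j}]$ for $F_a$, $a\in\{0,1\}$, at level $\alpha_j=T^{-2}$, and we will invoke two properties guaranteed by the confidence-band theorem stated later: \textbf{(a) validity} --- after a union bound over the $O(\log T)$ epochs, with probability at least $1-T^{-1}$ every band contains its $F_a$ on all of $[0,1]$; and \textbf{(b) average width} --- on that event $\E_X[\widehat U^{(a)}_{j}(X)-\widehat L^{(a)}_{j}(X)]\le\bar w(n_j)$ with $\bar w(n)=\tilde{O}(n^{-1/3})$, valid even under the heavy-tailed noise permitted by $\mathcal{D}(\tilde{C},L)$. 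Conditioning on the validity event, a context $x$ is moved into the exploitation region only once the two bands are separated at $x$, at which point the policy commits there to the arm whose band lies above --- which by validity is the optimal arm --- so no further regret is ever charged at such an $x$. Hence all regret comes from rounds whose context still lies in the exploration region $\Unc_{j-1}$ at the start of the epoch $j$ in which the round occurs, and for $x\in\Unc_{j-1}$ the bands overlap at $x$, so by validity
\begin{align}
|F_0(x)-F_1(x)| \;\le\; \sum_{a\in\{0,1\}}\big(\widehat U^{(a)}_{j-1}(x)-\widehat L^{(a)}_{j-1}(x)\big). \nonumber
\end{align}
Since in the exploration region the policy samples each arm a constant fraction of the time, the regret incurred in epoch $j$ is at most $m_j\,\E_X\!\big[|F_0(X)-F_1(X)|\,\mathbbm{1}\{X\in\Unc_{j-1}\}\big]=\tilde{O}\!\big(m_j\,n_{j-1}^{-1/3}\big)$ by (b). Summing over $j$ with $m_j\asymp n_j\asymp 2^j$ up to $n_J\asymp T$ gives $\sum_{j\le J}\tilde{O}(2^{2j/3})=\tilde{O}(T^{2/3})$, and the failure event contributes at most $T\cdot T^{-1}\cdot\|F_*\|_\infty=O(1)$. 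This proves the first part of Theorem~\ref{thm:upperinform}.

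\emph{Lower bound.} Set $\Delta=c_1T^{-1/3}$ and partition $[0,1]$ into $K=c_2T^{1/3}$ equal bins $I_1,\dots,I_K$ (so $|I_k|\asymp\Delta$). Fix the base pair $F_0(x)=\tfrac14+\tfrac x2$, and for $\eta\in\{-1,+1\}^K$ set $F_1^{\eta}=F_0+\Delta\,\psi_\eta$, where $\psi_\eta$ is piecewise linear with $|\psi_\eta|\le1$, $\psi_\eta\equiv\eta_k$ on the middle third of $I_k$, and $|\psi_\eta'|\lesssim K$; choosing the product $c_1c_2$ small makes $(F_1^\eta)'=\tfrac12+\Delta\psi_\eta'>0$, so $F_0,F_1^\eta\in\mathcal{F}_{[0,1]\uparrow}$ and both lie inside $[0,1]$ for $T$ large. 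We attach a symmetric, light-tailed noise $\varepsilon$ (e.g.\ a suitably scaled Gaussian) chosen so that Assumption~A holds with the prescribed $\tilde{C},L$; each resulting instance then lies in $\mathcal{D}(\tilde{C},L)$ --- in particular heavy tails cannot make the problem easier. On the middle third of $I_k$ one has $|F_0-F_1^\eta|=\Delta$ with optimal arm $\mathbbm{1}\{\eta_k=+1\}$, and a context lands in that region with probability $\asymp 1/K$, so $\asymp T/K\asymp 1/\Delta^2$ rounds have context there in expectation.

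The information-theoretic core is an Assouad-type argument along the coordinates of $\eta$: flipping $\eta_k$ changes only the law of arm-$1$ rewards, and only for contexts in the middle third of $I_k$, by $2\Delta$, so the KL between one such observation under $\eta$ and under the flipped $\eta^{(k)}$ is $O(\Delta^2)$; the contexts themselves carry no information about $\eta$, so by the chain rule applied to the interaction transcript its two laws have KL at most $O(\Delta^2)\cdot\E_\eta[N_k]$, where $N_k$ counts rounds with context in that region on which arm $1$ is pulled, and $\E_\eta[N_k]\le\E_\eta[\#\{i:X_i\in I_k\}]\asymp 1/\Delta^2$. Tuning $c_1$ small makes this KL at most $1/4$, so for every round landing in the middle third of $I_k$ the policy plays the suboptimal arm with probability bounded away from $0$ after averaging over $\eta_k$; hence a constant fraction of the $\asymp T/K$ such rounds are suboptimal, each costing $\Delta$. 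Summing over $k$ gives expected regret (over $\eta\sim\Unif(\{-1,+1\}^K)$, hence for some fixed instance) at least $\gtrsim K\cdot(T/K)\cdot\Delta=T\Delta=\Theta(T^{2/3})$, which is the second part of Theorem~\ref{thm:upperinform}.

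\emph{Main obstacles.} For the upper bound the delicate point is that the width control in (b) is only in $L_1(\Unif)$-average while the certain/uncertain split is pointwise; reconciling the two via the overlap characterization above, arguing that committed contexts never leak regret over later epochs, and doing so under noise with no moment beyond the first, is where the work lies --- though the heavy-tailed validity and average-width statement itself is deferred to the separate confidence-band theorem, which is the deepest ingredient overall. For the lower bound the crux is the simultaneous requirement that each instance be monotone, lie in $\mathcal{D}(\tilde{C},L)$, and be statistically hard: monotonicity forces $\Delta\lesssim 1/K$, which, together with the fact that learning the sign in a bin needs $\asymp 1/\Delta^2$ samples while only $\asymp T/K$ are available there, is exactly what pins the exponent at $2/3$; making the sequential KL bookkeeping rigorous against an adaptive policy is the main technical step.
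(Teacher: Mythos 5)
Your proposal is correct and follows essentially the same route as the paper: the upper bound is the same epoch-based successive elimination with adaptive confidence bands (validity at level $T^{-2}$, band-overlap bound on $|F_0-F_1|$ in the uncertain region, average-width $\tilde{O}(n^{-1/3})$ deferred to the band theorem, geometric summation over epochs), and the lower bound is the same hypercube construction with $\Theta(T^{1/3})$ bins, gap $\Theta(T^{-1/3})$, Gaussian noise satisfying Assumption A, and a KL chain-rule/transcript argument giving $\Theta(T^{2/3})$. The only differences are cosmetic: you perturb $F_1$ alone around a fixed increasing $F_0$ via piecewise-linear bumps and phrase the testing step as Assouad, whereas the paper interleaves two monotone staircases and invokes per-bin binary testing, and the paper additionally handles the random per-epoch sample counts (the binomial concentration and $\Leb(\Unc)$ cancellation) that your sketch correctly flags as the delicate step.
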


\noindent 
In practice, the minimax criteria might be too conservative, and it might be possible to achieve substantially smaller worst case regret if the parameters $F_0, F_1$ are restricted to a subset of the parameter space. This naturally motivates the question of \textit{adaptation}---is the policy $\pi^*$ optimal over certain simpler sub-classes? 

Our next result establishes that the policy $\pi^*$ is adaptively minimax optimal over the sub-class of piecewise constant monotone functions. To this end, let us now define for any positive integer $k \geq 1$,
\begin{align}
\mathcal{F}^{(k)}_{[0,1]\uparrow} = \{ F:[0,1] \to \mathbb{R}: F \,\textrm{monotone non-decreasing with k constant pieces}, 0 \leq F(x) \leq 1\}. \label{def:parameter-space} 
\end{align} 
\noindent 
For each positive integer $k \geq 1$, the above defines a sub parameter space $\mathcal{D}^{(k)}(\tilde{C},L) \subset \mathcal{D}(\tilde{C},L)$ where both the conditional mean functions $F_0,F_1 \in \mathcal{F}^{(k)}_{[0,1]\uparrow}.$ 

\begin{theorem}[Informal]
	\label{thm:upperinform}
	We have, 
	\begin{align*}
	\max_{D \in \mathcal{D}^{(k)}(\tilde{C},L)} \mathrm{Regret}_T(\pi^*) \leq \tilde{O}( \sqrt{kT}), \nonumber 
	\end{align*}
	where $\tilde{O}(\cdot)$ hides log-terms in $T$. Moreover, we have 
	\begin{equation*}
	\min_{\pi} \max_{D \in \mathcal{D}^{(k)}(\tilde{C},L)} \mathrm{Regret}_T(\pi) \geq O(\sqrt{kT}).
	\end{equation*}
\end{theorem}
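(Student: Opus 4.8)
The plan is to prove the two halves of the (adaptive) theorem separately: an upper bound of $\tilde O(\sqrt{kT})$ for the policy $\pi^*$ on $\mathcal{D}^{(k)}(\tilde C,L)$, and a matching $\Omega(\sqrt{kT})$ lower bound over the same subclass. For the upper bound, I would exploit the adaptivity of the confidence-band construction advertised in the abstract: when $F_0,F_1$ are piecewise constant with $k$ pieces, the average width of the band built from the isotonic quantile estimator after $n$ samples shrinks at the parametric-up-to-logs rate $\tilde O(\sqrt{k/n})$, rather than the $\tilde O(n^{-1/3})$ worst-case rate. I would feed this into the epoch/successive-elimination analysis used for Theorem~\ref{thm:upperinform}: at each epoch the regret incurred on the exploration region is controlled by the measure of the set where the two bands still overlap, and on that set the gap $|F_0(x)-F_1(x)|$ is at most (a constant times) the band width. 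Summing the per-epoch contributions — the exploitation region contributes zero extra regret by construction, and the exploration region contributes (number of pulls in the epoch) $\times$ (current band width) — and optimizing the epoch schedule yields a geometric-type sum dominated by its last term, giving total regret $\tilde O(\sqrt{kT})$. The bookkeeping is essentially the same as in the minimax proof, with $n^{-1/3}$ replaced everywhere by $\sqrt{k/n}$.

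For the lower bound I would reduce to a $k$-block construction. Partition $[0,1]$ into $k$ consecutive intervals of length $1/k$; on each interval place a two-point hypothesis for the pair $(F_0,F_1)$ that is monotone, piecewise constant, stays in $[0,1]$, and has a gap of order $\Delta$ between the two arms with the sign of the gap unknown. Because the contexts are uniform, each block receives $\Theta(T/k)$ samples in expectation, and the sub-problem on a single block is (up to constants) a two-armed bandit with gap $\Delta$; a standard information-theoretic two-point argument (Le Cam / KL between the two reward laws, which is finite and $O(\Delta^2)$ under Assumption~A since the error density is nice near zero) shows that any policy must incur $\Omega(\min\{\Delta \cdot T/k,\; 1/\Delta\})$ regret on that block. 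Choosing $\Delta \asymp \sqrt{k/T}$ balances the two terms at $\Theta(\sqrt{T/k})$ per block, and summing over the $k$ independent blocks gives $\Omega(\sqrt{kT})$. One must check that these $k$ single-block perturbations can be combined into one global hypothesis class inside $\mathcal{D}^{(k)}(\tilde C,L)$ — monotonicity across block boundaries forces the blocks to be arranged at increasing heights, which is fine as long as $k\Delta \lesssim 1$, i.e. $\sqrt{kT}\gtrsim k$, the regime where the claimed rate is the operative one.

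I expect the main obstacle to be the upper bound, specifically transferring the \emph{adaptive} width guarantee of the confidence band through the epoch structure cleanly. Two subtleties: first, the band width is itself random and data-dependent, so one needs a high-probability statement that holds uniformly over all epochs (a union bound over $O(\log T)$ epochs is cheap, but the band's validity under heavy tails is where Assumption~A does the real work); second, the number of ``effective pieces'' seen in an epoch is what governs the width, and one must argue that restricting attention to the exploration region (a union of sub-intervals) does not inflate the piece count beyond $O(k)$ up to logarithmic factors — intuitively the exploration region is a union of at most $O(k)$ intervals because the overlap set of two $k$-piecewise-constant bands has bounded complexity, but this should be made precise. The lower-bound side is comparatively routine: the only care needed is verifying the finiteness and $O(\Delta^2)$ bound on the KL divergence between the shifted error laws, which again follows from the local growth condition in Assumption~A, and checking the boundary/monotonicity compatibility of the block construction noted above.
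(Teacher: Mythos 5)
Your proposal is correct and follows essentially the same route as the paper: the upper bound is exactly the paper's epoch/successive-elimination analysis with the adaptive width bound $\tilde{O}(\sqrt{k/n})$ from the confidence-band results (Proposition~\ref{prop:isoband}) substituted for $n^{-1/3}$, and the lower bound is the same reduction to $k$ blocks of width $1/k$, each an independent two-armed problem with gap $\Delta\asymp\sqrt{k/T}$, which is how Lemma~\ref{lem:lower} argues (the paper simply cites the standard $k$-armed bound where you spell out the Le Cam/KL step). Two small nits that do not affect correctness: for the KL bound one should just instantiate Gaussian noise inside the class (as the paper does) rather than deducing it from Assumption~A, and your worry about piece-count inflation on the exploration region is moot, since restricting a monotone function with $k$ values to any subregion cannot increase the number of distinct values, which is the quantity entering the width bound.
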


\subsection{Confidence Bands for Isotonic Quantile Functions}
We construct adaptive confidence bands for isotonic median functions at each epoch of our policy. Construction of such confidence bands is a statistical question of natural interest, and has been relatively unexplored in the existing literature.  
We develop novel, finite sample valid, optimally rate adaptive confidence bands for isotonic quantile (for a general quantile level $0 < \tau < 1$) regression with heavy tailed errors. This is one of our main technical contributions in this paper. 

We first provide an informal glimpse of our confidence bands to whet the appetite of the reader. 
Suppose $f:[0,1] \rightarrow [0,1]$ is a nondecreasing/isotonic function. Consider the setting where $x_1,\dots,x_n$ i.i.d $\Unif(0,1)$ and we observe $y_i = f(x_i) + \epsilon_i$. For a given $0 < \tau < 1$, $\epsilon_i$  are i.i.d with CDF $F$ with $F(0) = \tau.$ This implies that the $\tau$ quantile of $y_i$ is $f(x_i)$. We sometimes refer to the $\epsilon$ variables as \textit{errors}. We do not assume any tail decay/moment assumptions on the errors. Under this setting we consider the problem of deriving upper and lower confidence bands for $f$; i.e two data dependent functions $\hat{U},\hat{L} : [0,1] \rightarrow \mathbb{R}$ such that for a given coverage probability $\alpha$ we have 
\begin{equation*}
\mathbb{P}(\hat{L}(x) \leq f(x) \leq \hat{U}(x) \:\:\forall x \in [0,1]) \geq 1 - \alpha.
\end{equation*}

\noindent 
Our confidence band method is based on the isotonic $\tau$ quantile estimator function $\hat{f}.$ This estimator is defined as follows:
\begin{equation*}
\hat{f} = \argmin_{f \in \mathcal{F}_{[0,1]\uparrow}} \sum_{i = 1}^{n} \rho_{\tau}(y_i - f(x_i))
\end{equation*} 
where $\rho_{\tau}: \R \rightarrow \R$ is the piecewise linear convex function $x \rightarrow x(\tau - \mathbb{1}(x < 0)).$  Note that the description above specifies the optimizer only on the design points. In fact, even on the design points, the definition is non-unique---our results are valid for any minimizer of this objective. Subsequently, we extrapolate the function to the interval $[0,1]$ using a natural piecewise constant extrapolation scheme. This ensures that the final estimate $\hat{f}$ is piecewise constant. 
%
%
%
%
%
%
Informally, the confidence band we propose in this paper takes the following form:

\begin{equation}\label{eq:bandus}
[\hat{L}(x), \hat{U}(x)] = \Big[\hat{f}(x) -  \frac{q_{\alpha} \sqrt{\log n}}{\sqrt{x - \hat{l}(x)}}, \hat{f}(x) +  \frac{q_{\alpha} \sqrt{\log n}}{\sqrt{\hat{u}(x) - x}} \Big]
\end{equation}
where $[\hat{l}(x),\hat{u}(x)]$ is the constant piece of $\hat{f}$ containing $x.$ In the above display, $q_{\alpha}$ is a specific number only depending on $\tilde{C},L$ and thus is pivotal for a given parameter space $\mathcal{D}(\tilde{C},L).$

The confidence band above is meant to provide an initial glimpse into our band construction. It is morally correct but is not entirely accurate as our construction involves additional details. In the actual construction, we first fit the isotonic quantile estimator $\hat{f}$ on the design points. This fit is non-decreasing, and thus piecewise constant. We use the recipe described above to construct the confidence bands on the design points. However, we have to modify the construction at design points which lie near the edge of the constant blocks in the estimated $\hat{f}$. This modification is needed to ensure the finite sample validity of the confidence bands in the presence of heavy-tailed errors. Finally, the band is extended to the interval [0,1] using a natural piecewise-constant, monotonic extension. The full details are given in Section~\ref{subsec:band_construction}.
 


\noindent 
The following result collects our main theoretical guarantees regarding the confidence band. 
\begin{theorem}[Informal]\label{thm:widthinform}
	The confidence band functions $L,U$ satisfy the following two properties:
	\begin{itemize}
		\item \textbf{Finite Sample Coverage:} We have $P(L(x) \leq f(x) \leq U(x) \:\:\forall x \in [0,1]) \geq 1 - \alpha.$
		
		\item \textbf{Adaptive Rate Optimal Width:} If $Z \sim \Unif(0,1)$ is a new test point independently drawn from the training data $\{x_i,y_i\}_{i = 1}^{n}$; then we have $$\E \:\left(U(Z) - L(Z)\right) \lesssim 
		\min\Big\{\frac{1}{n^{1/3}},\sqrt{\frac{k}{n}} \Big\}$$
		where $k$ is the number of constant pieces of the underlying true quantile function $f.$
	\end{itemize}
\end{theorem}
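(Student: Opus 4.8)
I would prove the coverage claim and the width claim separately.

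\medskip
\noindent\emph{Finite-sample coverage.} Because $f$ is nondecreasing and $\hat{f},\hat{l},\hat{u}$ are piecewise constant with breakpoints among the design points, the monotone, piecewise-constant way the band is extended off the design points reduces the statement to verifying $\hat{L}(x_i)\le f(x_i)\le\hat{U}(x_i)$ at the design points. The key structural input is a description of the isotonic $\tau$-quantile fit: writing $[p,r]$ for the maximal constant block of $\hat{f}$ containing the index $i$, I would use the min--max representation of the isotonic quantile estimator, together with the fact that its optimizing indices may be taken at the block endpoints, to get
\[
q^{-}_\tau\!\left(\{y_j:\ i\le j\le r\}\right)\ \le\ \hat{f}(x_i)\ \le\ q^{+}_\tau\!\left(\{y_j:\ p\le j\le i\}\right),
\]
where $q^{-}_\tau,q^{+}_\tau$ are the lower and upper empirical $\tau$-quantiles. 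Since $y_j=f(x_j)+\varepsilon_j$ with $f(x_j)\ge f(x_i)$ on $[i,r]$ and $f(x_j)\le f(x_i)$ on $[p,i]$, and since empirical quantiles are monotone in and translation-equivariant under the data, this gives the sandwich
\[
q^{-}_\tau\!\left(\{\varepsilon_j:\ i\le j\le r\}\right)\ \le\ \hat{f}(x_i)-f(x_i)\ \le\ q^{+}_\tau\!\left(\{\varepsilon_j:\ p\le j\le i\}\right).
\]
Coverage at $x_i$ then follows once one controls, uniformly over all index intervals $[a,b]$, the empirical $\tau$-quantile of $\{\varepsilon_j:a\le j\le b\}$ about its population value $0$. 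As $\#\{j\in[a,b]:\varepsilon_j\le t\}$ is $\mathrm{Binomial}(b-a+1,F(t))$, a Bernstein/Chernoff bound combined with Assumption~A (which lower bounds the growth of $F$ near $0$ in terms of $\tilde{C},L$) shows that with probability $1-\alpha n^{-2}$ this empirical quantile lies within $q_\alpha\sqrt{\log n/(b-a+1)}$ of $0$; a union bound over the $O(n^2)$ intervals, absorbed into $q_\alpha=q_\alpha(\tilde{C},L,\alpha)$, makes it simultaneous. Feeding $(a,b)=(i,r)$ and $(a,b)=(p,i)$ into the two sides of the sandwich recovers exactly the prescribed upper- and lower-band half-widths $q_\alpha\sqrt{\log n/(r-i+1)}$ and $q_\alpha\sqrt{\log n/(i-p+1)}$, hence coverage. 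The one place where this plain recipe fails is at design points for which $r-i$, respectively $i-p$, is below a $\log n$ threshold: there the relevant empirical $\tau$-quantile runs over a bounded number of \emph{heavy-tailed} errors and is not controlled at the $\sqrt{\log n/(\cdot)}$ scale, and this is precisely the role of the edge modification, which at such points replaces the formula by a band inherited, via monotonicity of $f$, from the nearest core design points.

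\medskip
\noindent\emph{Width: reduction to a block-count bound.} Let $\ell_1,\ell_2,\dots$ be the lengths in $[0,1]$ of the constant blocks of $\hat{f}$, so $\sum_m\ell_m=1$. At a core point $x_i$ in the block $[p,r]$ the half-widths are $q_\alpha\sqrt{\log n/(r-i+1)}$ and $q_\alpha\sqrt{\log n/(i-p+1)}$, and since $r-i\asymp n(\hat{u}(x_i)-x_i)$ and $i-p\asymp n(x_i-\hat{l}(x_i))$, the band width at a generic point $z$ is $\asymp q_\alpha\sqrt{\log n/n}\,\big((\hat{u}(z)-z)^{-1/2}+(z-\hat{l}(z))^{-1/2}\big)$. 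Conditionally on the data, when $Z\sim\Unif(0,1)$ lands in a block of length $\ell_m$ both $\hat{u}(Z)-Z$ and $Z-\hat{l}(Z)$ are $\Unif(0,\ell_m)$, so $\E_Z\!\big[(\hat{u}(Z)-Z)^{-1/2}+(Z-\hat{l}(Z))^{-1/2}\big]=4\sum_m\sqrt{\ell_m}$. The edge-modified points lie in blocks of length $O(\log n/n)$ and contribute only $O(n^{-2/3}\,\mathrm{polylog}\,n)$ in expectation, so
\[
\E\,[U(Z)-L(Z)]\ \lesssim\ \sqrt{\tfrac{\log n}{n}}\cdot\E\!\Big[\textstyle\sum_m\sqrt{\ell_m}\Big]\ +\ n^{-2/3}\,\mathrm{polylog}\,n .
\]
By Cauchy--Schwarz $\sum_m\sqrt{\ell_m}\le\sqrt{K}$, where $K$ is the number of blocks, and by Jensen $\E[\sqrt K]\le\sqrt{\E K}$; hence it suffices to establish the block-count estimate $\E[K]\lesssim\min\{k,\ n^{1/3}\}\cdot\mathrm{polylog}\,n$.

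\medskip
\noindent\emph{Width: the block-count bound.} This is the ``automatic model selection'' phenomenon for the isotonic fit. For the least-squares isotonic estimator it is classical that (i) the fit of a $k$-piecewise-constant signal has $O(k\,\mathrm{polylog}\,n)$ blocks, via the greatest-convex-minorant description of the estimator together with the fact that the convex minorant of a length-$m$ random walk has $O(\log m)$ faces in expectation; and (ii) for any signal with range at most $1$ the fit has $O(n^{1/3})$ blocks, the convexity of the cumulative signal being able to ``force'' at most $O(n^{1/3})$ extra faces. I would carry both facts over to the isotonic $\tau$-quantile estimator: its block structure admits the analogous min--max description, and Assumption~A (the local growth of $F$ at $0$) plays the role the error variance plays in the least-squares analysis, letting the relevant fluctuation process be compared with a well-behaved one despite the heavy tails. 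Combining with the previous paragraph, $\E[U(Z)-L(Z)]\lesssim\sqrt{\log n/n}\cdot\min\{\sqrt{k},n^{1/6}\}\cdot\mathrm{polylog}\,n=\min\{n^{-1/3},\sqrt{k/n}\}\cdot\mathrm{polylog}\,n$, which is the claim, the $\lesssim$ absorbing the polylog factors.

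\medskip
\noindent\emph{Main obstacle.} The crux is the block-count estimate just above: quantitative ``automatic adaptation'' results for isotonic regression are developed for sub-Gaussian errors, and re-deriving them for the isotonic $\tau$-quantile estimator under purely heavy-tailed noise — which is exactly where Assumption~A must be used — together with a careful treatment of the $O(k)$ blocks that straddle the true jump points of $f$, is the main technical work. Two further points need care but should be routine: the structural sandwich in the coverage part, because the quantile objective has a non-unique minimizer, so the min--max representation and the ``optimizers at block endpoints'' statement must be phrased for an arbitrary minimizer; and pinning down the edge modification so that it simultaneously restores finite-sample coverage and keeps the average width at the stated rate.
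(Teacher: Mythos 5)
Your coverage argument and your width reduction track the paper's own proof closely. The sandwich you extract from the min--max/perturbation description is exactly the paper's deterministic lemma, namely $\theta_i^*+\tau(\varepsilon_{i:\hat u_i})\le\hat\theta_i\le\theta_i^*+\tau(\varepsilon_{\hat l_i:i})$, obtained there by perturbing the fit on its constant block and using optimality, with monotonicity of $\theta^*$ transferring the bound from the $y$'s to the errors; the uniform control of empirical $\tau$-quantiles over all index intervals of length at least $\Gamma_2\log n$ is the paper's Hoeffding-plus-union-bound lemma, with Assumption~A entering exactly as you describe; and your treatment of near-edge indices by inheriting the band from the nearest core index via monotonicity is the paper's Case~2. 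Likewise, your width bound reduces, as in the paper, to $\sqrt{\log n/n}\cdot\mathbb{E}\sqrt{\hat{k}}$ via per-block sums of $1/\sqrt{j}$, Cauchy--Schwarz and Jensen, plus an $\mathbb{E}[\hat{k}]\log n/n$ contribution from the edge indices. One small caveat: your step $r-i\asymp n(\hat u(x_i)-x_i)$ needs two-sided control of the design spacings, which fails below the $1/n$ scale; the paper sidesteps this by bounding the continuum expected width by the maximal spacing (at most $C\log n/n$ with probability $1-n^{-2}$) times the discrete sum over design points, which only requires the one-sided max-gap bound.

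The genuine gap is the block-count estimate $\mathbb{E}[\hat{k}]\lesssim\min\{n^{1/3},k\log n\}$, which you propose to ``carry over'' from the least-squares theory; this is precisely the paper's main technical contribution, and the carry-over is not routine. The LSE facts you invoke rest on the greatest-convex-minorant representation of the isotonic LSE as the slope of the GCM of the partial-sum process; the isotonic $\tau$-quantile fit admits no such representation, so the ``$O(\log m)$ faces of a random-walk minorant'' argument has no direct analogue. The paper instead writes $\mathbb{E}[\hat{k}]=1+\sum_i\mathbb{P}(\hat\theta_{i-1}<\hat\theta_i)$, shows by local perturbations that $\{\hat\theta_{i-1}<\hat\theta_i\}$ is contained in an event comparing empirical $\tau$-quantiles of the errors over a left interval ending at $i-1$ and a right interval starting at $i$, and then proves a quantile analogue of Meyer--Woodroofe's key proposition: $\mathbb{P}\big[\max_{1\le i\le m}\tau(X_{1:i})+\max_{1\le j\le n}(1-\tau)(-Y_{1:j})\le z\big]\le C(1/m+1/n+z^2)$. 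Establishing this requires a first-passage estimate $\mathbb{P}[\min_{i\le m}S_i\ge 0]\asymp m^{-1/2}$ for random walks with increments $\mathbf{1}(\varepsilon_j\le 0)-\tau$, and, more delicately, an anti-concentration (``density near zero'') bound for the maximum, proved via an exponential tilting chosen so that $h(F(z))=h(F(-w))$ together with an optional-stopping/martingale comparison; this is where Assumption~A is actually used, not merely as a surrogate for an error variance. In addition, converting the resulting per-index bound $\min_{l_i,u_i}\{(u_i-i+1)^{-1}+(i-l_i+1)^{-1}+(\theta^*_{u_i}-\theta^*_{l_i})^2\}$ into $\min\{n^{1/3},k\log n\}$ needs a separate deterministic dyadic-counting argument, also absent from your sketch. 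Since you explicitly flag the block-count step as the crux and leave it unproven, the proposal as written does not yet constitute a proof of the width statement.
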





\subsection{Background and Comparisons with Existing Literature}\label{subsec:current art}

\begin{itemize}
    \item[(i)] Contextual bandits --- The contextual bandit problem is known to be a useful midway between the classical multi-armed bandit problem with iid rewards, and the adversarial bandit setting. Contexts are common in applications arising in clinical trials, e-commerce, finance etc., and this has motivated an extensive investigation into variants of the contextual bandit problem. We refer the interested reader to Chapter $8$ of \cite{slivkins2019introduction} for an introduction to this model and some classical results. The challenge of this problem is governed by the complexity of the conditional mean functions. Traditionally, one assumes a parametric form on the conditional means.  Currently, there is an emerging line of work which studies bandit problems with ``high-dimensional" contexts (see e.g. \cite{abbasi2013online,bastani2020online,ren2020dynamic,wang2018minimax,kim2019doubly,carpentier2012bandit}).  These developments exploit recent advances in high-dimensional statistics to design appropriate policies which are applicable in this setting.

     Recently, there has been substantial interest in non-parametric contextual MAB problems, which impose weaker structural assumptions on the mean-reward functions.~\cite{rigollet2010nonparametric} formulated a non-parametric model for the contextual bandit problem with two arms, and assuming smoothness  of the conditional response functions, characterized the minimax Regret in this setting. These results were generalized in \cite{perchet2013multi} to the setting of $k$ arms. These results assumed that the conditional response functions were Holder smooth with index $\beta<1$. 
     Recently, \cite{hu2020smooth} extended this result, and introduced a family of regret-optimal policies for any $\beta \in [1, \infty)$. Unfortunately, the optimal policies derived in these papers assume access to the true smoothness $\beta$ of the underlying conditional response functions. This is rarely true in practice; this prompts the question of adaptation. \textit{Is is possible to design policies that achieve the optimal regret, without explicit knowledge of the underlying smoothness?}.  This question was explored in \cite{gur2019smoothness}, who discovered that this is \emph{not possible} in general. They established that the absence of  adaptive policies arises from the non-existence of adaptive confidence sets in non-parametric regression under smoothness classes. Subsequently, they leveraged recent breakthroughs in non-parametric regression to establish that adaptive policies exist under additional \emph{self-similarity} assumptions on the mean reward functions.

    These existing works on nonparametric contextual bandits assume Holder smoothness of the mean reward functions. The corresponding problem assuming shape constraints on the mean reward functions such as monotonicity, convexity etc. has not been studied at all in the literature. Such shape constraints are arguably more natural than Holder smoothness constraints in several applications. This paper initiates a study of this problem.  
    We establish that when the conditional mean reward functions are isotonic, there exist globally minimax rate optimal policies that adapt automatically to the sub-parameter space of piecewise constant functions. Such auto-adaptation phenomena are well-understood in the offline setting of nonparametric regression under isotonic constraints; see~\cite{chatterjee2015risk},~\cite{chatterjee2018matrix},~\cite{han2019isotonic}. We exhibit that similar phenomena carry over to the online setting.  
    \item[(ii)] Bandits with heavy tails --- In the multi-armed bandit literature, one typically assumes that the reward distributions are bounded and sub-gaussian. This assumption is restrictive in applications with heavy-tailed data, arising e.g. in financial applications. Another concern is with data corruptions. \cite{bubeck2013bandits,liu2011multi} are early attempts at mitgating this problem. More recently, rapid progress has been achieved in combining recent advances in algorithmic high-dimensional robust statistics and online learning (see e.g.  \cite{lee2020optimal,lu2019optimal, liu2011multi, yu2018pure, shao2018almost, medina2016no, wei2020minimax,dubey2020cooperative,agrawal2021regret, tao2021optimal,altschuler2019best,chen2020online}).  This prompts us to consider heavy tailed noise in contrast to the usual  subgaussian assumption. 
    
    
    \item[(iii)] Thresholding Bandits under shape constraints---Recently, \cite{cheshire2020influence} has initiated a study of the thresholding bandit problem under shape constraints on the mean-rewards of the arms. In this problem, one wishes to identify the arms with mean-rewards above a certain level. This problem is more closely related to the Multi-armed Bandit problem, and the corresponding insights do not seem directly related to the problem studied in this paper.

    \item[(iv)] Isotonic Quantile regression --- 
    The Isotonic Least Squares Estimator (LSE) has a long history in Statistics and has been thoroughly studied from several aspects, see Section $2.1$ of~\cite{groeneboom2014nonparametric} for a textbook reference. The Isotonic Quantile Regression (IQR) estimator studied in this paper is the quantile version of the Isotonic LSE. The IQR estimator has been studied far less compared to its LSE counterpart. The IQR estimator appears to have been first proposed by~\cite{cryer1972monotone}. Pointwise limiting behaviour of this estimator is known, with the cube root $O(n^{-1/3})$ asymptotic rate of convergence; 
    see~\cite{abrevaya2005isotonic},~\cite{van1990estimating}. Apart from this, nothing much else appears to have been established about the IQR estimator. 

    Coming to the question of constructing confidence bands in isotonic regression, there appears to be only a handful of papers in the literature giving rigorously valid confidence bands. A multi scale testing based confidence band method was pioneered by~\cite{dumbgen2003optimal} for shape constrained mean regression. This multi scale testing approach to build confidence bands was then generalized to shape constrained quantile regression; see~\cite{dumbgen2004confidence} and~\cite{dumbgen2007confidence}. More recently,~\cite{yang2019contraction} gives a confidence band method based on Isotonic LSE for mean regression.

    Our confidence band method for an underlying isotonic quantile function is different from the multiscale testing based method of~\cite{dumbgen2003optimal}. 
    The main difference is that our band is based on the IQR estimator whereas the traditional bands are based on kernel estimators. Second, the confidence band of \cite{dumbgen2003optimal} is only valid asymptotically whereas our band is valid for finite samples. The bands in~\cite{dumbgen2003optimal} also rely on Monte Carlo simulations which make it computationally heavier than our band which can be computed in $O(n)$ time; see Remark~\ref{rem:comp}.

    The flavor of our band is more similar to the bands for isotonic regression given in~\cite{yang2019contraction}. Their bands, which are based on the Isotonic LSE, are also finite sample valid and use concentration inequalities (as do we) to construct the band; see Theorem $3$ in~\cite{yang2019contraction}. However, since our band is based on the IQR estimator, our band is more robust to heavy tailed errors. In fact, the band given in~\cite{yang2019contraction} is valid only under sub gaussian errors. In comparison, our band remains valid even when the errors have no finite first moment like the Cauchy distribution.

A major point of difference in our analysis here with~\cite{dumbgen2003optimal},~\cite{yang2019contraction} is that both do not give the rigorous adaptive width guarantee as in Theorem $3$. To the best of our knowledge, such an adaptive width guarantee result for a isotonic confidence band method is new. Theorem $3$ establishes that adaptive inference is possible for IQR and consequently adaptive estimation is also possible for IQR. To place Theorem $3$ in context, it is necessary to mention that several recent papers have established an automatic adaptation property of Shape Constrained Least Squares Estimators; see the survey paper~\cite{guntuboyina2017adaptive} and references therein. The main theme of this line of research is that the shape constrained  LSE often automatically adapts (exhibiting faster rates of convergence) to certain kinds of \textit{sparsity} in the parameter space without explicit regularization. For instance, asymptotic risk bounds are known for the Isotonic LSE; see~\cite{Zhang02},~\cite{chatterjee2015risk}. These bounds not only give the cube root rate of convergence of the Isotonic LSE for a global loss function like the mean squared error (MSE) but also establish a near parametric rate $\tilde{O}(k/n)$ rate of convergence of the same Isotonic LSE estimator when the true underlying function is piecewise constant with $k$ pieces in addition to being monotone. This adaptivity of the shape constrained LSE estimator needs no external regularization which is why the adaptation is said to be automatic.

The automatic adaptation property of shape constrained LSE for estimation raises the following two questions. 
\begin{itemize}
    \item Do Shape Constrained Quantile Regression Estimators also enjoy adaptive estimation guarantees?
    \item Is Adaptive Inference also possible under Shape Constraints? For example, does the width of the confidence intervals/bands based on shape constrained estimators adapt?
\end{itemize}

The two questions above appear to be largely open research directions for general shape constraints. Theorem $3$ makes a new contribution to the isotonic regression literature by answering the above two questions in the affirmative in the setting of univariate isotonic regression.

\item[(v)] Using Local Information for Inference---The form of our confidence bands bears some similarities to the recent work of~\cite{deng2020confidence}. They consider the problem of pointwise inference for an isotonic mean function at a given point $x \in [0,1]$ say. They construct an asymptotically valid confidence interval for $f(x)$ which is of the form 

\begin{equation}\label{eq:isobandcunhui}
I = \Big[\hat{f}(x) - q_{\alpha} \frac{\sigma}{\sqrt{\hat{w}(x)}},\hat{f}(x) + q_{\alpha} \frac{\sigma}{\sqrt{\hat{w}(x)}} \Big]
\end{equation}
where $\hat{f}$ is the isotonic LSE function (which is piecewise constant by nature), $\hat{w}(x)$ is the length of the constant piece of $\hat{f}$ containing $x_0$ and $q_{\alpha}$ is the $1 - \alpha$ quantile of a pivotal distribution which is related to the Chernoff distribution~\cite{Groeneboom89Airy},~\cite{groeneboom2001computing}. The novelty of this approach, compared to earlier approaches, is in using the local information $\hat{w}(x)$ in the width of the confidence interval. The asymptotic distribution of $n^{1/3} \big(\hat{f}(x) - f(x)\big)$ is known from classical results in shape constrained regression literature (see Section $4$ in~\cite{guntuboyina2018nonparametric} and references therein) and these results can obviously be used to derive asymptotic confidence intervals for $f(x).$ However, the asymptotic confidence interval constructed following this classical approach would involve several nuisance parameters. In contrast, the new approach in~\cite{deng2020confidence} does not suffer from this issue.

In our setting, we require confidence bands and not just confidence intervals for a single point $x_0 \in [0,1].$ We also require finite sample coverage since we are interested in applications to Contextual Bandits. Moreover, we base our confidence band on the IQR estimator and not the Isotonic LSE. Despite these differences, the similarity here is in the fact that both their confidence interval and our confidence bands use information about the random interval which is the constant piece containing $x$ in the piecewise constant estimators Isotonic LSE and IQR. Note that compared to~\eqref{eq:isobandcunhui}, our band uses the additional information about the position of $x$ within $[\hat{l}(x_0),\hat{u}(x)]$ and not just its length $\hat{w}(x) = \hat{u}(x) - \hat{l}(x).$ The fact that by using this slightly more detailed information one may obtain confidence bands has not been explicitly realized in the shape constrained regression literature so far.

\item[(vi)] Computation--- It is well known that the Isotonic LSE estimator can be computed by the Pooled Adjacent Violators Algorithm (PAVA) in $O(n)$ time; see~\cite{mair2009isotone} and references therein.
Versions of the PAVA algorithm exist for the Isotonic Median estimator which can be computed in $O(n \log n)$ time; see~\cite{stout2008isotonic}. It has recently been established that the Isotonic Quantile Regression estimator (for a general quantile $\tau$) can be computed in $O(n \log n)$ time by dynamic programming; see~\cite{hochbaum2017faster}. This implies that our entire confidence band can actually be computed in $O(n \log n)$ time; see Remark~\ref{rem:comp}.

\end{itemize}

\noindent 
\textbf{Notation:} Throughout this paper, we use the standard Landau notation $O(\cdot)$, $o(\cdot)$, and $\Theta(\cdot)$ for asymptotics. We say that $a_n \lesssim b_n$ if there exists some constant $C>0$ such that $\limsup a_n/b_n \leq C$. Sometimes, we abuse notation, and use $\lesssim$ to also drop polylog factors. We use $C,C'$ as universal constants throughout the paper---these will be positive constants independent of the problem parameters. The precise constants $C,C'$ will change from line to line.  

\noindent
\textbf{Outline:} The rest of the paper is structured as follows. We describe our policy and collect our main result on its worst case regret in Section~\ref{sec:results}. We describe our confidence band construction and the related guarantees in Section~\ref{sec:confidence_sets}. Section~\ref{sec:discussions} discusses some complements to our main results, and collects some directions for follow up research. In Section \ref{sec:sims}, we collect some numerical simulations exploring the finite sample performance of our confidence bands.  Finally, Section~\ref{sec:proofs} contains the proofs of the main results.

\section{Policy Description and Regret Bounds}
\label{sec:results}

\subsection{Policy}
\label{sec:policy}
We introduce a policy $\pi^*$ for our problem for a horizon of $T$ rounds. The policy proceeds in epochs, and in epoch $i$, we observe $N_i$ new datapoints. We set $N_1 = \lceil \sqrt{T} \rceil$ and $N_{i + 1} = 2 N_i$ for $i > 1$. We leave it implicit that the last epoch is potentially of a smaller size. This means that the total number of epochs $K$ is at most $\log_{2}(1 + \frac{T}{N_1}) - 1.$ Also, set $\alpha_{T} = T^{-2}.$  We describe the epochs of this policy concretely below. 

\textbf{Epoch 1:} We observe $N_1$ datapoints. Denote the corresponding contexts as $X_1, \cdots, X_{N_1} \sim \Unif(0,1)$ iid random variables. At this point, we have no information, so we sample the arms $\{A_l : 1\leq l \leq N_1\}$ iid uniform from $\{0,1\}$, and we observe the corresponding outcomes $Y_l(A_l)$. We define 
\begin{align}
\mathcal{S}^{(1)}_j = \{ l \in [N_1]: A_l =j \} , \,\,\, j=0,1\}. \nonumber 
\end{align}
We fit a separate isotonic regression on each of the datasets $\{X_l,Y_l(A_l)\}_{l \in \mathcal{S}_j^{(1)}}$ for $j \in \{0,1\}$, and construct $1 - \alpha_{T}$ confidence bands $U_j^{(1)}$ and $L_j^{(1)}$ for $ j \in \{0,1\}.$
At this point, we will be able to determine the optimal arm in certain parts of the space. To this end, we define the sets
\begin{align}
\Cert_0^{(1)} = \{x \in [0,1] : L^{(1)}_0(x) > U^{(1)}_1(x)\}. \nonumber \\
\Cert_1^{(1)} = \{x \in [0,1] : L^{(1)}_1(x) > U^{(1)}_0(x)\}. \nonumber 
\end{align} 
Finally, we set $\Unc^{(1)} = [0,1] \cap [\Cert_0^{(1)} \cup \Cert_1^{(1)}]^c$. This completes the operations in Epoch 1. 

\textbf{Epoch 2:} We observe $N_2$ new observations---denote the corresponding contexts as $X_{N_1 +1}, \cdots, X_{N_1+N_2}$. We proceed in the following steps: 
\begin{itemize}
	\item If $X_{N_1 + l} \in \Cert_j^{(1)}$ for $j \in \{0,1\}$, we pull arm $j$ and observe $Y_{N_1 + l}(j)$. If $X_{N_1 + l} \in \Unc^{(1)}$, pull the arm $A_{N_1 + l}$ iid uniform from $\{0,1\}.$
	
	 \item We define for $j \in \{0,1\}$,
	\begin{align}
	\mathcal{S}^{(2)}_j = \{l \in [N_1 + 1,N_2]: X_{l} \in \Unc^{(1)},A_l =j \}. \nonumber 
	\end{align}
	
	\item We fit a separate isotonic regression on each of the datasets $\{X_l,Y_l(A_l)\}_{l \in \mathcal{S}_j^{(2)}}$ for $j \in \{0,1\}$, and construct $1 -  \alpha_{T}$ confidence bands $U_j^{(2)}$ and $L_j^{(2)}$ for $j \in \{0,1\}.$
	
	\item At this point, we have identified an additional part of the context space where one of the arms dominates. We define the sets 
	\begin{align}
	\Cert^{(2)}_0 = \{ x \in \Unc^{(1)}: L^{(2)}_0(x) > U^{(2)}_1(x) \} , \nonumber \\
	\Cert^{(2)}_1 = \{ x \in \Unc^{(1)}: L^{(2)}_1(x) > U^{(2)}_0(x) \}. \nonumber 
	\end{align}
\end{itemize}
Finally, we set $\Unc^{(2)} = \Unc^{(1)} \cap [\Cert^{(2)}_0 \cup \Cert^{(2)}_1]^{c}$. 
This completes the operations in Epoch 2. 

\textbf{Epoch $i$:} For notational convenience, for $m \geq 1$, we set $\bar{N}_m = \sum_{k=1}^{m} N_k$. At this step, we observe $N_i$ new observations---we denote the corresponding contexts as $X_{\bar{N}_{i-1} + 1} , \cdots, X_{\bar{N}_i}$. We proceed as follows:
\begin{itemize}
	\item If $X_{N_{i - 1} + l} \in \cup_{m = 1}^{i - 1} \Cert_j^{(m)}$ for $j \in \{0,1\}$, we pull arm $j$ and observe $Y_{N_{i - 1} + l}(j)$. If $X_{N_{i - 1} + l} \in \Unc^{(i - 1)}$, pull the arm $A_{N_{i - 1} + l}$ iid uniform from $\{0,1\}.$
	
	\item  We define for $j \in \{0,1\}$,
	\begin{align}
	\mathcal{S}^{(i)}_j = \{l \in [N_{i - 1} + 1,N_i]: X_{l} \in \Unc^{(i - 1)},A_l =j \}. \nonumber 
	\end{align}

	\item Fit isotonic regressions separately to the datasets $\{X_l,Y_l(A_l)\}_{l \in \mathcal{S}_j^{(i)}}$ for $j \in \{0,1\}$, and construct $1 - 2 \alpha_{T}$ confidence bands $U_j^{(i)}$ and $L_j^{(i)}$ for $ j \in \{0,1\}.$

	\item At this point, we have identified an additional part of the context space where one of the arms dominates. We define the sets
	\begin{align}
	\Cert^{(i)}_0 = \{ x \in \Unc^{(i - 1)}: L^{(i)}_0(x) > U^{(i)}_1(x) \} , \nonumber \\
	\Cert^{(i)}_1 = \{ x \in \Unc^{(i - 1)}: L^{(i)}_1(x) > U^{(i)}_0(x) \}. \nonumber 
	\end{align}
\end{itemize}
Finally, we set $\Unc^{(i)} = \Unc^{(i - 1)} \cap [\Cert^{(i)}_0 \cup \Cert^{(i)}_1]^{c}$. This completes the operations in Epoch $i$.

\begin{remark}
Note that for any epoch $i \geq 1$, we have $\Unc^{(i )} \subseteq \Unc^{(i - 1)}$. Also, note that the two sets $\cup_{j = 1}^{i} \big[\Cert_{0}^{(j)} \cup \Cert_{1}^{(j)}\big]$ and $\Unc^{(i)}$ are disjoint and their union is the whole of the context space $[0,1].$ 
\end{remark}

\begin{remark}
At each step, the policy moves certain sub-intervals between consecutive datapoints from $\Unc^{(i-1)}$ to $\cup_{j=1}^{i} \Cert_0^{(j)}\cup \Cert_1^{(j)}$. As a result, note that the set $\Unc^{(i)} \subseteq [0,1]$ is a union of finite disjoint intervals for all $ i\geq 1$. Another way to explain this is our confidence band functions are piecewise constant with finite number of pieces and hence at every epoch $i$, the set $\Cert^{(i)}$ remains a finite union of intervals.  
\end{remark}

\subsection{Regret Analysis}
The following result is our main regret bound on the policy $\pi^*$ described in Section~\ref{sec:policy}.
\begin{theorem}\label{thm:regret}
There exists an absolute constant $C>0$ such that 
\begin{align}
    \max_{D \in \mathcal{D}(\tilde{C},L)} \mathrm{Regret}_T(\pi^*) \leq C T^{2/3} (\log T)^{3/2}, \,\,\, \max_{D \in \mathcal{D}^{(k)}(\tilde{C},L)} \mathrm{Regret}_T(\pi^*) \leq C (\sqrt{k} + 1) \sqrt{T} (\log T)^2. \nonumber   
\end{align}

\end{theorem}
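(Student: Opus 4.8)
The plan is to control the regret epoch by epoch. Fix $T$, recall there are $K=O(\log T)$ epochs with the $N_i$ roughly doubling, and write $\mathrm{Regret}_T(\pi^*)=\sum_{i=1}^K R_i$ with $R_i$ the expected regret accrued in epoch $i$. Let $\mathcal{G}$ be the event that every confidence band built over the run of the algorithm covers its target function on its whole domain; by the finite-sample coverage guarantee and a union bound over the $\le 2K$ bands, $\P(\mathcal{G}^c)\le 2K\alpha_T=O(\log T\cdot T^{-2})$, so the contribution of $\mathcal{G}^c$ to the regret is at most $T\cdot\P(\mathcal{G}^c)=O(\log T/T)$ and is henceforth ignored. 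The first epoch pulls arms uniformly and contributes at most $N_1=\lceil\sqrt T\rceil$ (instantaneous regret $\le 1$), which is negligible against both target rates. It remains to bound $\sum_{i\ge 2}R_i$ on $\mathcal{G}$.

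Within epoch $i\ge 2$, split the $N_i$ fresh contexts according to whether they land in the certain region $\cup_{m<i}[\Cert_0^{(m)}\cup\Cert_1^{(m)}]$ or in $\Unc^{(i-1)}$. If $X_t$ is in the certain region it entered some $\Cert_j^{(m)}$ at an epoch $m<i$, so $L_j^{(m)}(X_t)>U_{1-j}^{(m)}(X_t)$; on $\mathcal{G}$ this forces $F_j(X_t)\ge L_j^{(m)}(X_t)>U_{1-j}^{(m)}(X_t)\ge F_{1-j}(X_t)$, so the arm $j$ the policy pulls is optimal and the instantaneous regret is $0$. If $X_t\in\Unc^{(i-1)}$ the policy pulls a uniform arm, so the conditional expected instantaneous regret is $\tfrac12|F_0(X_t)-F_1(X_t)|$. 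The key deterministic fact is that $x\in\Unc^{(i-1)}$ means $L_0^{(i-1)}(x)\le U_1^{(i-1)}(x)$ and $L_1^{(i-1)}(x)\le U_0^{(i-1)}(x)$ (neither defining inequality of $\Cert_0^{(i-1)},\Cert_1^{(i-1)}$ holds); combined with $F_j(x)\in[L_j^{(i-1)}(x),U_j^{(i-1)}(x)]$ on $\mathcal{G}$, a two-line interval estimate yields $|F_0(x)-F_1(x)|\le w_0^{(i-1)}(x)+w_1^{(i-1)}(x)$, where $w_j^{(i-1)}=U_j^{(i-1)}-L_j^{(i-1)}$. Since the contexts of epoch $i$ are independent of everything built up to and including epoch $i-1$, conditioning on that history and integrating the $N_i$ fresh uniform contexts gives
\[
R_i \;\lesssim\; N_i\,\E\!\left[\int_{\Unc^{(i-1)}}\bigl(w_0^{(i-1)}(x)+w_1^{(i-1)}(x)\bigr)\,dx\right]
\]
up to the negligible $\mathcal{G}^c$ term.

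It remains to bound the expected average band width over the uncertain region. The band for arm $j$ in epoch $i-1$ is the isotonic-quantile band of Section~\ref{subsec:band_construction} fitted to the $n_j^{(i-1)}:=|\mathcal{S}_j^{(i-1)}|$ points of epoch $i-1$ with contexts in $\Unc^{(i-2)}$ that received arm $j$; conditionally these contexts are i.i.d.\ uniform on $\Unc^{(i-2)}$. Writing $\mu_m:=|\Unc^{(m)}|$ (so $\mu_0=1$), one has $n_j^{(i-1)}\sim\mathrm{Bin}(N_{i-1},\mu_{i-2}/2)$, which concentrates about its mean $N_{i-1}\mu_{i-2}/2$ whenever that mean is $\gtrsim\log T$; when it is $\lesssim\log T$ we instead use the trivial bound $R_i\le N_i\mu_{i-1}\le N_i\mu_{i-2}=O(\log T)$, costing only $O(\log^2 T)$ over all epochs. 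In the main regime, the width guarantee of Theorem~\ref{thm:widthinform}, applied on the domain $\Unc^{(i-2)}\supseteq\Unc^{(i-1)}$ with $n\asymp N_{i-1}\mu_{i-2}$ design points, bounds $\E\int_{\Unc^{(i-1)}}w_j^{(i-1)}$ by $\mu_{i-2}$ times $\min\{n^{-1/3},\sqrt{k/n}\}$ up to polylog factors. Substituting $n\asymp N_{i-1}\mu_{i-2}$ and $N_i\asymp 2N_{i-1}$ and using $\mu_{i-2}\le 1$ gives, up to polylog factors, $R_i\lesssim N_{i-1}^{2/3}$ in general and $R_i\lesssim\sqrt{kN_{i-1}}$ on $\mathcal{D}^{(k)}(\tilde C,L)$. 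Because the $N_i$ grow geometrically up to $T$, each sum telescopes to its last term: $\sum_i N_i^{2/3}\lesssim T^{2/3}$ and $\sum_i\sqrt{kN_i}\lesssim\sqrt{kT}$. Collecting the epoch-$1$, good-event and bad-event contributions, and tracking the $\sqrt{\log T}$ factors from the pivotal constant $q_{\alpha_T}$ (with $\alpha_T=T^{-2}$) and from the per-epoch union bounds, produces the stated $(\log T)^{3/2}$ and $(\log T)^{2}$ powers.

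The step I expect to be most delicate is the application of the width guarantee on the random, data-dependent domain $\Unc^{(i-2)}$: this set is not an interval but a finite union of intervals which a priori could be fragmented into many short pieces. For the minimax $T^{2/3}$ bound this is benign, since the general-isotonic width bound $\lesssim n^{-1/3}$ times domain length is essentially scale-robust. For the adaptive $\sqrt{kT}$ bound, however, one must argue that the ``effective number of constant pieces'' of the restricted problem $F_j|_{\Unc^{(i-2)}}$ — which governs the $\sqrt{k/n}$ width bound through the block count of the isotonic-quantile fit together with the identity $\int_B(\hat u(x)-x)^{-1/2}\,dx=2\sqrt{|B|}$ for a block $B=[\hat l,\hat u]$ and Cauchy--Schwarz — is still $O(k\,\mathrm{polylog})$. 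This should follow from the sandwich $\Unc^{(i-2)}\subseteq\{x:|F_0(x)-F_1(x)|\lesssim \bar w^{(i-2)}\}$, valid on $\mathcal{G}$ with $\bar w^{(i-2)}$ the largest epoch-$(i-2)$ band width, together with the fact that $|F_0-F_1|$ is piecewise constant with $O(k)$ pieces for $D\in\mathcal{D}^{(k)}(\tilde C,L)$, and an adaptive bound on the number of blocks of the isotonic quantile estimator. Carrying this through, along with the bookkeeping of the interplay between the shrinking $\mu_m$ and the random sample sizes $n_j^{(m)}$, is the technical heart of the argument.
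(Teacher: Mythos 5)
Your proposal is correct and follows essentially the same route as the paper's proof: epoch-wise decomposition, a global coverage event with $\alpha_T=T^{-2}$, zero regret on the certain region, the bound $|F_0-F_1|\le w_0^{(i-1)}+w_1^{(i-1)}$ on $\Unc^{(i-1)}$, binomial concentration for the per-arm sample sizes, an application of Proposition~\ref{prop:isoband} with $A=\Unc^{(i-2)}$, and geometric summation over epochs; your handling of the regime $N_{i-1}\Leb(\Unc^{(i-2)})\lesssim\log T$ by a trivial $O(\log T)$ per-epoch bound is a harmless variant of the paper's stopping time $\tau$ at threshold $\log T/\sqrt T$. The one point worth noting is that the step you flag as the ``technical heart'' is not actually delicate: in Proposition~\ref{prop:isoband} the adaptivity parameter $k$ is the cardinality of $\{f(x):x\in[0,1]\}$, and since $F_j$ is monotone with at most $k$ distinct values, the sorted design sequence on the fragmented set $\Unc^{(i-2)}$ is again monotone with at most $k$ constant pieces, so no sandwich argument or control of the ``effective number of pieces'' is needed.
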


\noindent 
The following lemma shows that our policy $\pi^*$ attains minimax rate optimal regret. 
 \begin{lemma}
 \label{lem:lower}
 There exists a universal constant $c>0$ such that for all integer $k \geq 1$ 
 \begin{align}
 \min_{\pi} \max_{D \in \mathcal{D}(\tilde{C},L)} \mathrm{Regret}_T(\pi) \geq c T^{2/3}, \,\,\,\,\,  \min_{\pi} \max_{D \in \mathcal{D}^{(k)}(\tilde{C},L)} \mathrm{Regret}_T(\pi) \geq c \sqrt{k T} .\nonumber 
 \end{align}
 \end{lemma}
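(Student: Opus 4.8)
# Proof Proposal for Lemma~\ref{lem:lower}

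The plan is to establish both lower bounds by reducing to standard minimax arguments, exploiting the structure of the isotonic parameter space. For the $T^{2/3}$ bound, I would construct a hard instance where the two reward functions $F_0, F_1$ cross somewhere in $[0,1]$, and the "gap" $|F_0(x) - F_1(x)|$ is small (of order $\Delta$) in a region of width of order $\Delta$ near the crossing. The isotonicity constraint is what forces this local structure: if the functions cross and both are monotone with range in $[0,1]$, the region where they are within $\Delta$ of each other can be made to have Lebesgue measure on the order of $\Delta$ in the worst case, but crucially we can also engineer many such near-crossings. On any round where the context lands in this ambiguous region, any policy incurs expected per-step regret of order $\Delta$ unless it has accumulated enough samples there to distinguish the arms; distinguishing requires roughly $1/\Delta^2$ samples (standard two-point testing / Le Cam), but only a $\Delta$-fraction of contexts land there, so it takes $\Theta(1/\Delta^3)$ rounds. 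Balancing the exploration cost $\Theta(1/\Delta^3) \cdot \Delta$ against the exploitation regret on the remaining $T$ rounds $\Theta(T \Delta)$ — or more carefully, setting $\Delta \asymp T^{-1/3}$ so that the regret $\asymp T \cdot T^{-1/3} = T^{2/3}$ — gives the claimed rate. I would make this rigorous via a prior over instances (a single parameter $\theta \in \{0,1\}$ indicating which arm dominates in the ambiguous block) and a Le Cam / Assouad two-point argument bounding the KL divergence between the induced distributions on the observed transcript.

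For the $\sqrt{kT}$ bound, I would embed $\Theta(k)$ independent two-armed bandit instances. Partition $[0,1]$ into $k$ consecutive blocks; on each block both $F_0$ and $F_1$ are constant, with the constant levels chosen so that (a) the overall functions remain monotone non-decreasing with values in $[0,1]$ (this constrains the "staircase" to be increasing across blocks, but within each block we have freedom to set a small gap $\pm \delta_j$ around a block-specific base level) and (b) in block $j$ the dominant arm is an independent unknown bit $\theta_j \in \{0,1\}$ with gap $\delta$. Since a $1/k$ fraction of the $T$ contexts fall in each block, block $j$ effectively receives $\asymp T/k$ samples; the classical two-armed bandit lower bound then gives regret $\asymp \min\{\delta T/k, 1/\delta\}$ per block, summing to $\asymp k \min\{\delta T / k, 1/\delta\}$, optimized at $\delta \asymp \sqrt{k/T}$ to yield $\asymp \sqrt{kT}$. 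Care is needed because the blocks are not literally independent — the policy's behavior in one block can depend on observations in others — but since the $\theta_j$ are independent and the reward in block $j$ carries no information about $\theta_{j'}$, a standard chain-rule decomposition of the KL divergence (as in the classical MAB lower bound, e.g. the argument in Slivkins Chapter 2 or Auer et al.) handles this.

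The key technical points to verify carefully are: (i) that the constructed instances genuinely lie in $\mathcal{D}(\tilde{C},L)$ and $\mathcal{D}^{(k)}(\tilde{C},L)$ respectively — in particular the error distribution must be symmetric, mean zero, and satisfy Assumption~A with the prescribed $\tilde C, L$; choosing $\varepsilon$ to be, say, a fixed symmetric light-tailed distribution (Gaussian or uniform) makes this straightforward and also makes the KL computations clean (heavy tails only help the lower bound, so we may as well use a convenient noise law); (ii) the monotonicity bookkeeping for the staircase construction in the $\sqrt{kT}$ case, ensuring the small within-block perturbations $\pm\delta$ do not violate the global monotone constraint — this is handled by spacing the base levels of consecutive blocks by more than $2\delta$; and (iii) translating the information-theoretic bound on identifying $\theta$ (or the vector $(\theta_j)$) into a regret lower bound, which requires the standard observation that a policy with small regret must identify the good arm on most rounds, hence solves the testing problem.

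I expect the main obstacle to be the $T^{2/3}$ lower bound construction: unlike the $\sqrt{kT}$ case, which is essentially a direct embedding of known MAB lower bounds, the $T^{2/3}$ rate arises from the interplay between the isotonic shape constraint and the stochastic (uniform) contexts, and one must construct the hard instance so that the ambiguous region has exactly the right width relative to the gap. One clean route is to reduce to the $\sqrt{kT}$ bound itself: since $\mathcal{F}^{(k)}_{[0,1]\uparrow} \subset \mathcal{F}_{[0,1]\uparrow}$, we have $\min_\pi \max_{\mathcal{D}(\tilde C, L)} \mathrm{Regret}_T \geq \min_\pi \max_{\mathcal{D}^{(k)}(\tilde C,L)} \mathrm{Regret}_T \gtrsim \sqrt{kT}$ for every $k$, and taking $k \asymp T^{1/3}$ yields $\sqrt{T^{1/3} \cdot T} = T^{2/3}$ — so the first bound follows from the second by optimizing over $k$ (subject to $k \lesssim n$-type constraints, here $k \lesssim T^{1/3}$ which is satisfied). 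This makes the $\sqrt{kT}$ lower bound the real crux, and the $T^{2/3}$ bound a corollary.
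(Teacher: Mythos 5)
Your final plan is correct and rests on the same family of hard instances as the paper -- monotone ``staircases'' with $k$ blocks, a within-block gap whose sign is an independent Rademacher bit per block, Gaussian noise tuned to satisfy Assumption~A, and an Assouad/KL chain-rule argument converting testing error into inferior sampling and hence regret -- but you organize the logic in the opposite order. The paper proves the $T^{2/3}$ bound \emph{directly}: it takes $m$ blocks with the two functions sitting at consecutive grid levels $(i-1)/m$ and $i/m$ (so gap $1/m$, and monotonicity is automatic because adjacent blocks share an endpoint), carries out the full Assouad computation with the per-round conditional KL bounded by $C/m^3$, and sets $m=T^{1/3}$; it then disposes of the $\sqrt{kT}$ bound by observing that known-block piecewise-constant instances embed a $k$-armed contextual bandit and citing the classical lower bound. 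You instead make the $\sqrt{kT}$ embedding the primary argument (with gap $\delta\asymp\sqrt{k/T}$ and the KL chain rule handling cross-block dependence) and obtain $T^{2/3}$ as a corollary via $\mathcal{D}^{(k)}(\tilde C,L)\subset\mathcal{D}(\tilde C,L)$ with $k\asymp T^{1/3}$; this is legitimate, and in fact your corollary instance coincides with the paper's direct construction, since $1/m=\sqrt{m/T}$ exactly when $m=T^{1/3}$. What the paper's route buys is a self-contained KL computation for the headline $T^{2/3}$ rate; what your route buys is that the adaptive bound does double duty and the $T^{2/3}$ case needs no separate argument.

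Two small cautions. First, your opening heuristic (a single crossing with gap $\Delta$ on an ambiguous region of width $\Delta$) does not by itself give $T^{2/3}$ -- the exploitation regret there is $T\Delta\cdot\Delta=T\Delta^2\asymp T^{1/3}$; it is precisely the many-block construction you adopt afterwards that restores the missing factor, so you are right to discard that picture. Second, the range constraint needs a constant's worth of care: with base levels spaced by $2\delta$ you need $2\delta k\le 1$, which at $\delta=\sqrt{k/T}$, $k=T^{1/3}$ fails by a factor of $2$; either shrink $\delta$ by a constant or share endpoints between adjacent blocks as the paper does. Relatedly, both your construction and the paper's implicitly require $k\lesssim T^{1/3}$ for the staircase with gap $\sqrt{k/T}$ to fit inside $[0,1]$ (beyond that regime the $T^{2/3}$ bound is the binding one anyway), so this is not a defect of your proposal relative to the paper's own proof.
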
 
 
 \subsubsection{Overview of regret upper bound} We now give a sketch of the proof of Theorem~\ref{thm:regret} which bounds the regret of our policy $\pi^*.$ This sketch is presented for the convenience of the reader and is meant to convey the essence of our proof strategy for bounding the regret of our policy. The complete proof is deferred to Section \ref{sec:upper_bound_proof}. 

First, we decompose our total regret into regrets $\{R_i:1 \leq i \leq K\}$ incurred during each of the $K$ epochs as below:

\begin{equation*}
\mathrm{Regret}_{T}(\pi^*) = \sum_{i \in [K]} R_i = \sum_{i \in [K]} \:\sum_{l = \bar{N}_{i-1}+1 }^{\bar{N}_i}\mathbb{E}[ F_*(X_{l}) - F_{A_l}(X_l)],
\end{equation*}
where the last equality follows by taking the expectation of $Y_l(A_l)$ given the past and $A_l$. 

We now fix an epoch $i \in [K]$ and $l \in [\bar{N}_{i-1}+1,\bar{N}_i]$ and bound $\mathbb{E}[ F_*(X_{l}) - F_{A_l}(X_l)].$ 
\begin{itemize}
	\item Step 1: Define the event $$G_{i - 1} =  \cap_{j = 0}^{1}\{U_{j}^{(i - 1)}(x) \geq F_{j}(x) \geq L_{j}^{(i - 1)}(x)\:\:\forall x \in [0,1]\}.$$

In words, $G_{i - 1}$ is the event that the confidence bands formed for $F_0,F_1$ after epoch $i - 1$ actually cover $F_0,F_1$. Since $\alpha_{T} = T^{-2}$ the event $G_{i - 1}^{c}$ has small enough probability so that we can ignore it and hence it suffices to bound 
\begin{equation*}
\mathbb{E}[ F_*(X_{l}) - F_{A_l}(X_l)] \mathbb{1}(G_{i - 1}) = \mathbb{E}[ F_*(X_{l}) - F_{A_l}(X_l)] \mathbb{1}(G_{i - 1}) \mathbb{1}(X_l \in \Unc^{(i - 1)})
\end{equation*}
where the last equality follows because the events $\{X_l \notin \Unc^{(i - 1)}\}$ and $G_{i - 1}$ imply $F_*(X_{l}) = F_{A_l}(X_{l})$.

Now it can be further shown that
\begin{align*}
&[F_*(X_{l}) - F_{A_l}(X_{l})] \mathbb{1}(G_{i - 1}) \mathbb{1}(X_l \in \Unc^{(i - 1)})
\leq \\& [(U^{(i-1)}_1(X_{l}) - L^{(i-1)}_1(X_{l})) + (U^{(i-1)}_0(X_{l}) - L^{(i-1)}_0(X_{l}) )] \mathbb{1}(G_{i - 1}) \mathbb{1}(X_l \in \Unc^{(i - 1)})
\end{align*}
This follows because $\{X_l \in \Unc^{(i - 1)}\}$ implies the two confidence intervals at $X_l$ overlap and $G_{i - 1}$ ensures the true function values $F_0(X_l),F_1(X_l)$ are inside their respective confidence intervals; see the justification after~\eqref{eq:int2} in the full proof.

The main takeaway from the last display is that to bound $\mathbb{E}[ F_*(X_{l}) - F_{A_l}(X_l)]$ it is enough to bound the term 
\begin{equation*}
\mathbb{E} [(U^{(i-1)}_1(X_{l}) - L^{(i-1)}_1(X_{l}))  \mathbb{1}(X_l \in \Unc^{(i - 1)})]
\end{equation*}
as the other term involving $(U^{(i-1)}_0(X_{l}) - L^{(i-1)}_0(X_{l}))$ can be bounded similarly.

\item Step 2: 
Now note that the band functions $U^{(i-1)}_1,L^{(i-1)}_1$ are built out of the data points in $\mathcal{S}^{(i - 1)}_1.$ Throughout this step, we will argue after conditioning on all the events till epoch $i - 2$. Even though the number of points $|\mathcal{S}^{(i - 1)}_1| \sim Bin(N_{i - 1},\Leb(\Unc^{(i - 2)}))$ is random; for this proof sketch we will pretend that $|\mathcal{S}^{(i - 1)}_1|$ is deterministic and equals $N_{i - 1} \Leb(\Unc^{(i - 2)}))$ which is the mean of a $Bin(N_{i - 1},\Leb(\Unc^{(i - 2)}))$ random variable. This is justified in the main proof using an appropriate tail bound for a Binomial random variable and by considering the two cases where $\Leb(\Unc^{(i - 2)})$ is small and large separately. Furthermore, we observe that each of the context points in $\mathcal{S}^{(i - 1)}_1$ is distributed as $\Unif(\Unc^{(i - 2)}).$ 

Now let us write the trivial upper bound
$$\mathbb{E} [(U^{(i-1)}_1(X_{l}) - L^{(i-1)}_1(X_{l}))]  \mathbb{1}(X_l \in \Unc^{(i - 1)}) \leq \mathbb{E} [(U^{(i-1)}_1(X_{l}) - L^{(i-1)}_1(X_{l}))]  \mathbb{1}(X_l \in \Unc^{(i - 2)}).$$

The advantage of writing the above trivial display is that now we recognize that we can upper bound the R.H.S in the last display by using Proposition~\ref{prop:isoband}, which is our main result on the average width of our confidence bands. 

By setting $A = \Unc^{(i - 2)}$,$n = N_{i - 1} \Leb(\Unc^{(i - 2)})$ and $Z = X_l$ in ~\eqref{eq:width} of Proposition~\ref{prop:isoband} all of whose conditions are met, we can deduce that
\begin{align*}
&\mathbb{E} [(U^{(i-1)}_1(X_{l}) - L^{(i-1)}_1(X_{l}))]  \mathbb{1}(X_l \in \Unc^{(i - 1)}) \lesssim \\& \Leb(\Unc^{(i - 2)}) \min\{[N_{i - 1} \Leb(\Unc^{(i - 2)})]^{-1/3},K^{1/2}[N_{i - 1} \Leb(\Unc^{(i - 2)})]^{-1/2}\} \leq \\&
\min\{N_{i - 1}^{-1/3},K^{1/2} N_{i - 1}^{-1/2}\}
\end{align*}
where in the first inequality we have used the $\lesssim$ symbol to ignore logarithmic and constant factors and negligible terms and in the last inequality we have used the trivial bound $\Leb(\Unc^{(i - 2)}) \leq 1.$

\item Step 3: 
We now bring everything together in this step. Since $(X_l,A_l)$ are i.i.d within epoch $i$ we can simply sum the last display in the previous step $N_i$ times to obtain 
\begin{equation*}
R_i = \mathbb{E} \sum_{l = \bar{N}_{i-1}+1 }^{\bar{N}_i}\mathbb{E}[ F_*(X_{l}) - F_{A_l}(X_l)] \lesssim N_i \min\{N_{i - 1}^{-/3},K^{1/2},N_{i - 1}^{-1/2}\} \leq 2  \min\{N_{i - 1}^{2/3},K^{1/2},N_{i - 1}^{1/2}\}
\end{equation*}
where we have used the fact that $N_i \leq 2 N_{i - 1}$ which further implies $N_i = 2^{i - 1} N_{1}$ for $i \in [K].$ 

Now we can sum the last display over all $i \in [K]$ to finally obtain
\begin{equation*}
\mathrm{Regret}_{T}(\pi^*) \lesssim \sum_{i \in [K]} \min\{N_1^{2/3} 2^{2i/3}, K^{1/2}  N_1^{1/2} 2^{i/2} \} \lesssim \min\{N_1^{2/3} 2^{2K/3},K^{1/2}  N_1^{1/2} 2^{K/2}\} \lesssim \min\{T^{2/3},K^{1/2} T^{1/2}\}
\end{equation*}
where the last inequality follows because $K \leq \log_{2}(1 + \frac{T}{N_1}) - 1.$ This finishes our proof sketch. 
\end{itemize}

\section{Confidence Bands for Isotonic Quantile Functions}
\label{sec:confidence_sets} 
In this section we explain our confidence band construction and the associated results. The readers can treat this as an independent section. Consider the sequence model 
\begin{align}
y_i = \theta_i^* + \varepsilon_i, \label{eq:model} 
\end{align} 
where $\varepsilon_i \sim F$ are iid. Throughout, we assume that the sequence $\mathbf{\theta}^* = (\theta_i^*)_{i=1}^{n} \in \mathcal{M}$, where $\mathcal{M}$ is the set of bounded isotonic sequences
\begin{align}
\mathcal{M} = \{ \theta \in \R^{n}: 0 \leq \theta_1 \leq \cdots \leq \theta_n \leq 1\}. \nonumber 
\end{align} 

\begin{remark}
The assumption that $\theta^*$ has entries in $[0,1]$ is important in our analysis. However the specific bounds $0$ and $1$ are arbitrary, and can be easily replaced by any two known real numbers $a$ and $b$.
\end{remark}

\noindent
For any random variable $X$ and $\tau \in (0,1)$, we define $q$ to be a $\tau$-quantile so that 
\begin{align}
\mathbb{P}[X<q] \leq \tau \leq \mathbb{P}[X \leq q]. \nonumber 
\end{align} 
An equivalent way to define a $\tau$-quantile of $X$ is 
\begin{equation*}
q = \argmin_{a \in \R} \E \rho_{\tau}(X - a)
\end{equation*}
where $\rho_{\tau}: \R \rightarrow \R$ is the piecewise linear convex function $x \rightarrow x(\tau - \mathbb{1}(x < 0)).$ For any finite set of numbers $\{z_1, \cdots, z_n\}$, we use $\tau(z_{1:n})$ to denote any $\tau$-quantile of the empirical distribution of this set.

We assume that the error distribution $F$ has a  unique $\tau$ quantile equal to $0$. For notational convenience, we denote this henceforth as $\tau(F) =0$. This implies that the unique $\tau$ quantile sequence of $y$ is $\theta^*.$ Estimation of the true $\tau$ quantile sequence $\theta^*$ based on the sequence $\mathbf{y}$ is a natural problem in this context. 



Having observed the data $\mathbf{y}$ from \eqref{eq:model}, we use the isotonic quantile regression estimator
\begin{align}
\hat{\theta} = \textrm{argmin}_{\theta \in \mathcal{M}} \sum_{i=1}^{n} \rho_{\tau}(y_i - \theta_i). \label{eq:estimator} 
\end{align}

The estimator $\hat{\theta}$ need not be uniquely defined because of the lack of strong convexity of the $\rho_{\tau}$ function. When there are multiple solutions, $\hat{\theta}$ can be defined by choosing any one of the solutions according to some predefined deterministic rule. We will now construct confidence bands for $\theta^*$ based on the estimator $\hat{\theta}$.

\subsection{Band Construction} 
\label{subsec:band_construction} 
We describe our band construction procedure in detail in this section. 
Our algorithm takes as input two constants $\Gamma_1,\Gamma_2$ and the data vector $\mathbf{y} \in \R^n$ and outputs two sequences $\hat{\theta}^{u},\hat{\theta}^{l}.$ The sequences $\hat{\theta}^{u},\hat{\theta}^{l}$ correspond to the upper/lower confidence bands for the true sequence $\theta^*.$  We refer to this algorithm as the \textit{BAND($\Gamma_1$,$\Gamma_2$)} algorithm.

\begin{itemize}
	\item[(i)] Compute the isotonic $\tau$ quantile estimator $\hat{\theta}$ defined in \eqref{eq:estimator}. 
	\item[(ii)] For $1\leq i \leq n$, let $\hat{l}_i = \min\{k: \hat{\theta}_k = \hat{\theta}_i\}$ and $\hat{u}_i = \max\{ k : \hat{\theta}_k = \hat{\theta}_i\}$. In words, $\hat{l}_i,\hat{u}_i$ are the left and right endpoints of the constant piece of $\hat{\theta}$ containing $i.$
	\item[(iii)] Define the random set $\hat{G} \subset [n]$ as follows:
	\begin{equation*}
	\hat{G} = \{i \in [n]: \min\{\hat{u}_i - i + 1, i - \hat{l}_i + 1\} \geq \Gamma_2 \log n\}.
	\end{equation*}

	\item[(iv)] Take any $i \in \hat{G}.$ Define 
	\begin{equation*}
	\hat{\theta}^{(u)}_i = \min \Big\{\hat{\theta}_i + \frac{\Gamma_1 \sqrt{\log n}}{\sqrt{\hat{u}_i - i + 1,}},1 \Big\}.
	\end{equation*}
	Also define 
	\begin{equation*}
	\hat{\theta}^{(l)}_i = \max\Big\{\hat{\theta}_i - \frac{\Gamma_1 \sqrt{\log n}}{ \sqrt{i - \hat{l}_i + 1}},0 \Big \}.
	\end{equation*}
	This completes the definition of $\hat{\theta}^{(u)},\hat{\theta}^{(l)}$ on $\hat{G}.$
	


\item[(v)] Now take any $i \in \hat{G}^{c} = [n] - \hat{G}.$ Define the integer 
$\hat{k}_{u}(i) = \min\{j \in [n]: j \in \hat{G}, j > i\}$ which is well defined if the set $\{j \in \hat{G}, j > i\}$ is non empty. By definition, $\hat{k}_{u}(i) \in \hat{G}.$
In this case, define $$\hat{\theta}^{(u)}_i = \hat{\theta}^{(u)}_{\hat{k}_{u}(i)}.$$
Otherwise, if the set $\{j \in \hat{G}, j > i\}$ is empty, define $\hat{\theta}^{(u)}_i = 1.$

Similarly, define the integer 
$\hat{k}_{l}(i) = \max\{j \in [n]: j \in \hat{G}, j < i\}$ which is well defined if the set $\{j \in \hat{G}, j < i\}$ is non empty. Again note that $\hat{k}_{l}(i) \in \hat{G}.$
In this case, define $$\hat{\theta}^{(l)}_i = \hat{\theta}^{(l)}_{\hat{k}_{l}(i)}.$$
Otherwise, if the set $\{j \in \hat{G}, j < i\}$ is empty, define $\hat{\theta}^{(l)}_i = 0.$

\item[(vi)]
The band sequences $\hat{\theta}^{(l)},\hat{\theta}^{(u)}$ defined so far need not be monotonic. We perform a final post-processing step to ensure that the constructed bands are monotonic. Specifically, for the upper confidence band, we perform a backward pass through $\hat{\theta}^{(u)}$ and set $\hat{\theta}^{(u)}_i = \hat{\theta}^{(u)}_{i+1}$ if $\hat{\theta}^{(u)}_i > \hat{\theta}^{(u)}_{i+1}$. The operation on the lower bound is similar. Note that this final step conserves the coverage properties of the constructed set, and cannot increase the average width of the confidence band. 
 \end{itemize}

\begin{remark}\label{rem:comp}
The estimator $\hat{\theta}$ can be seen as a solution of a linear program and hence can be computed efficiently. In fact, it can be computed in near linear $O(n \log n)$ time as shown in~\cite{hochbaum2017faster}. Since $\hat{\theta}$ can be computed in near linear time and all the operations decribed after (i) can be performed in $O(n)$ time, it is clear from the above construction that the band functions $\hat{\theta}^{(l)},\hat{\theta}^{(l)}$ can also be computed in $O(n \log n)$ time. 
\end{remark}

\subsection{Validity and Width of the Confidence Bands} 


To state our formal guarantees on the coverage properties of the confidence band introduced in the last section, we need a ``local growth" assumption on the error distribution $F$. 
This is a standard assumption in the quantile regression literature. 

\noindent 
\textbf{\textit{Assumption A:}} There exist constants $L, \tilde{C} > 0$ such that 
\begin{equation}\label{eq:assum}
|F(t) - F(0)| > \tilde{C} t \:\:\text{for}\:\: t \in [-L,L].
\end{equation} 

\begin{remark}
\label{rem:growth}
Assumption $A$ is implied by a standard assumption in the quantile regression literature which assumes that the density (w.r.t lebesgue measure) of the errors is lower bounded by a positive number in a neighborhood of the relevant quantile; see condition 2 in \cite{he1994convergence} and condition D.1 in \cite{belloni2011}. The above assumption ensures that the quantile of $y_i$
is uniquely defined and there is a
uniformly linear growth of the CDF around a neighbourhood of the quantile. An assumption of such a flavor
(making the quantile uniquely defined) is clearly going to be necessary. We think this is a mild assumption
on the distribution of $y_i$ as this should hold for most realistic error distributions. For example, if the errors are i.i.d draws from any density with respect to the Lebesgue measure that is bounded away
from zero on any compact interval then our assumption will hold. In particular, no moment assumptions are
being made on the distribution of the errors. 
\end{remark}
\noindent 
The next result establishes that upon suitable choice of the parameters $\Gamma_1$, $\Gamma_2$, the constructed bands enjoy finite sample coverage property. 

\begin{theorem}\label{thm:valid}
	\label{lem:coverage} 
	Suppose Assumption A holds. Fix a confidence level $0 \leq \alpha \leq 1.$ $\Gamma_1, \Gamma_2$ are inputs to the \textit{BAND($\Gamma_1$, $\Gamma_2$)} algorithm chosen in the following way:
	Choose $\Gamma_1>0$ large enough satisfying
	\begin{equation}\label{eq:cond1}
	2 \log 3 \:\:(\tilde{C}^2 \Gamma_1^2 - 1) \geq \log \frac{1}{\alpha}
	\end{equation}
	and then choose $\Gamma_2>0$ large enough satisfying
	\begin{equation}\label{eq:cond2}
	\frac{\Gamma_1}{\sqrt{\Gamma_2}} \leq L.
	\end{equation}
	Then the following finite sample coverage guarantee holds for the output of the \textit{BAND($\Gamma_1$,$\Gamma_2$)} algorithm $\hat{\theta}^l,\hat{\theta}^u \in \R^n$,
	\begin{align}
	\mathbb{P}\Big[ \hat{\theta}_i^l \leq \theta_i^* \leq \hat{\theta}_i^u\:\:\forall i \in [n] \Big] \geq 1 - \alpha. \nonumber 
	\end{align}
\end{theorem}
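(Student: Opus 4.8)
The plan is to control, for each index $i \in [n]$, the probability that the true value $\theta_i^*$ escapes the band. By the union bound it suffices to show that the "bad" event has probability at most $\alpha / (\text{something summable})$; in fact the right way to organize this is to first reduce to indices $i \in \hat G$, since the post-processing step (vi) and the extension step (v) only relabel band values using values already defined on $\hat G$, and a monotone post-processing pass cannot destroy coverage when $\theta^*$ itself is monotone (this needs a short argument: if the un-postprocessed upper band already covers $\theta^*$ at every point, then replacing $\hat\theta^{(u)}_i$ by $\hat\theta^{(u)}_{i+1}$ when the former exceeds the latter keeps it $\geq \theta^*_i$ because $\theta^*_i \le \theta^*_{i+1} \le \hat\theta^{(u)}_{i+1}$; symmetrically for the lower band). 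Likewise for $i \in \hat G^c$, the band value at $i$ is copied from a nearby $\hat G$-index on the correct side, and monotonicity of $\theta^*$ again gives coverage at $i$ for free once it holds on $\hat G$.

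The core estimate is therefore: for a fixed $i$, on the event that $i$ lands in $\hat G$ with a constant block whose right endpoint is $\hat u_i$, we want
\[
\mathbb{P}\Big[ \hat\theta_i > \theta_i^* + \tfrac{\Gamma_1 \sqrt{\log n}}{\sqrt{\hat u_i - i + 1}} \Big] \;\text{is small,}
\]
and symmetrically on the left. The key structural fact about the isotonic quantile estimator — the analogue of the min-max (Pool Adjacent Violators) representation for the $L_2$ case — is that $\hat\theta_i$ equals $\min_{b \ge i} \max_{a \le i} \; \tau\text{-quantile}\big(y_a, \dots, y_b\big)$. Hence $\{\hat\theta_i > c\}$ is contained in the event that for \emph{every} $b \ge i$ there is some $a \le i$ with the empirical $\tau$-quantile of $y_{a:b}$ exceeding $c$; in particular, taking $b = \hat u_i$ (or rather, handling the randomness of $\hat u_i$ by a union over all candidate right endpoints $m \ge i$), we get that on $\{\hat u_i = m\}$, for each $a \le i$ the empirical $\tau$-quantile of $y_{a:m}$ is at least $\hat\theta_i$, which forces more than a $\tau$-fraction of $\{y_a,\dots,y_m\}$ to lie below $\hat\theta_i \le$ the candidate threshold. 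Turning this around: $\{\hat\theta_i > \theta_i^* + t\}$ on $\{\hat u_i = m\}$ implies that among the $m - i + 1$ indices $j \in \{i,\dots,m\}$, the count of $j$ with $y_j < \theta_i^* + t$ (a count of $\text{Binomial}$-type variables, since $\theta_j^* \ge \theta_i^*$ for $j \ge i$ so $\mathbb{P}[y_j < \theta_i^* + t] \le \mathbb{P}[\varepsilon_j < t] = F(t) \le \tau - \tilde C t$ using Assumption A and monotonicity of $F$) is at least $\tau(m-i+1)$ — a deviation of at least $\tilde C t (m-i+1)$ above its mean. A Bernstein/Hoeffding bound then gives a factor $\exp(-c\, \tilde C^2 t^2 (m-i+1))$, and plugging $t = \Gamma_1 \sqrt{\log n} / \sqrt{m - i + 1}$ makes this $\exp(-c\, \tilde C^2 \Gamma_1^2 \log n) = n^{-c \tilde C^2 \Gamma_1^2}$. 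Summing over $m \ge i$ and then over $i \in [n]$ costs a factor $n^2$, which condition \eqref{eq:cond1} is exactly calibrated to absorb (the constant $2\log 3$ reflecting the Hoeffding constant for $\tau$-type indicator sums); condition \eqref{eq:cond2} guarantees that on $\hat G$ the quantity $t = \Gamma_1\sqrt{\log n}/\sqrt{\hat u_i - i + 1} \le \Gamma_1/\sqrt{\Gamma_2} \le L$ stays inside the window $[-L,L]$ where Assumption A is valid, so the linear lower bound $F(t) \le \tau - \tilde C t$ is applicable.

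I would carry this out in the order: (1) the min-max representation of $\hat\theta$ and the reduction $\{\hat\theta_i > c\} \subseteq \bigcap_{b\ge i}\{\ldots\}$; (2) for fixed $i$ and fixed candidate right endpoint $m$, bound $\mathbb{P}[\hat\theta_i > \theta_i^* + t,\ \hat u_i = m]$ by a binomial deviation, invoking Assumption A via \eqref{eq:cond2} to get the linear drift and a Hoeffding bound to get the exponential decay; (3) union over $m \in \{i,\dots,n\}$ and over $i \in [n]$, using \eqref{eq:cond1} to conclude the total upper-escape probability is at most $\alpha/2$; (4) the mirror-image argument for the lower band with $\hat l_i$, giving another $\alpha/2$; (5) the reduction from general $i$ to $i \in \hat G$ via the copying step (v) and the monotone post-processing step (vi), using monotonicity of $\theta^*$. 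The main obstacle I anticipate is step (2): carefully handling the \emph{random} block endpoint $\hat u_i$ (it is not independent of the $y_j$'s, so one cannot just condition on it) — the clean fix is the union bound over all deterministic candidate values $m$ combined with the observation that the event $\{\hat\theta_i > \theta_i^*+t\} \cap \{\hat u_i = m\}$ is contained in the purely-binomial event "$\ge \tau(m-i+1)$ of the $y_j$, $i\le j\le m$, fall below $\theta_i^* + t$", which no longer references $\hat\theta$ or $\hat u_i$ at all, and whose probability is controlled by a standard concentration inequality. A secondary subtlety is making sure the non-uniqueness of $\hat\theta$ (any minimizer is allowed) does not break the min-max representation; this is fine because the min-max formula identifies a specific minimizer and any other minimizer agrees with it except possibly on a set that does not affect the relevant block structure — alternatively one re-derives the needed one-sided inequality ($\hat\theta_i \le \min_{b\ge i}\max_{a\le i}\tau(y_{a:b})$) directly from optimality of whichever minimizer was chosen.
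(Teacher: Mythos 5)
Your overall route coincides with the paper's: (i) a deterministic one-sided inequality for $\hat{\theta}_i$ obtained from optimality of the fit on its constant block (the paper's Lemma~\ref{lem:determ} does exactly your ``fallback'' perturbation argument and does not need any min-max formula), (ii) conversion to a binomial deviation of error indicators using monotonicity of $\theta^*$ and Assumption A, with \eqref{eq:cond2} ensuring the deviation scale $\Gamma_1\sqrt{\log n}/\sqrt{\hat{u}_i-i+1}\le \Gamma_1/\sqrt{\Gamma_2}\le L$ stays in the window where Assumption A applies, (iii) Hoeffding plus a union bound over the $O(n^2)$ relevant segments, calibrated by \eqref{eq:cond1} (the paper's Lemma~\ref{lem:hoeffding} bounds $\tau(\varepsilon_{k:l})$ uniformly over all pairs with $l-k+1\ge\Gamma_2\log n$, which is the same bookkeeping as your union over candidate values of the random endpoints), and (iv) extension of coverage from $\hat{G}$ to all of $[n]$ via the copying step and monotonicity of $\theta^*$, with the truncation at $0,1$ and the final monotonization harmless since $\theta^*\in[0,1]^n$ is monotone.

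However, the core estimate as written pairs the deviations with the wrong sides of the block, and the claimed containment would fail if executed literally. The lower band's half-width is $\Gamma_1\sqrt{\log n}/\sqrt{i-\hat{l}_i+1}$, so overestimation $\{\hat{\theta}_i>\theta_i^*+t\}$ must be controlled with $t$ tied to the left reach $i-\hat{l}_i+1$; the upper band fails when $\hat{\theta}_i<\theta_i^*-t$ with $t$ tied to the right reach $\hat{u}_i-i+1$. The event you set up, $\{\hat{\theta}_i>\theta_i^*+\Gamma_1\sqrt{\log n}/\sqrt{\hat{u}_i-i+1}\}$, is not a failure event of either band, and its containment in the stated binomial event is false: block optimality on $\{\hat{u}_i=m\}$ only forces at least $\tau(m-i+1)$ of the $y_j$, $i\le j\le m$, to be $\le\hat{\theta}_i$, which says nothing about their lying below $\theta_i^*+t$ when $\hat{\theta}_i$ is large (and indeed this overestimation probability need not be small, e.g.\ when $\theta^*$ jumps sharply just after $i$). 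Relatedly, Assumption A gives $F(t)\ge\tau+\tilde{C}t$ for $t\in(0,L]$; the bound $\le\tau-\tilde{C}t$ you invoke is the one for $F(-t)$. Your actual computation (using $\theta_j^*\ge\theta_i^*$ for $j\ge i$ and the mean bound $\tau-\tilde{C}t$) is precisely the correct one for the underestimation event $\{\hat{\theta}_i<\theta_i^*-t\}$ on the right block; relabel the events accordingly (and argue the mirror image on the left block for overestimation) and your plan becomes the paper's proof. One further caution: the min-max identity only yields ``there exists $a\le i$ with $\tau(y_{a:m})\ge\hat{\theta}_i$,'' not ``for every $a\le i$,'' so you should rely on the direct optimality/perturbation derivation of $\tau(y_{i:\hat{u}_i})\le\hat{\theta}_i\le\tau(y_{\hat{l}_i:i})$, which is exactly Lemma~\ref{lem:determ}.
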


\noindent 
Our next theorem gives an adaptive bound to the average width of our confidence band.
\begin{theorem}\label{thm:width}
	 There exists a universal constant $C > 0$ (independent of the problem parameters) such that the outputs 
	$\hat{\theta}^l,\hat{\theta}^u \in \R^n$ satisfy
	\begin{equation*}
	\max\Big\{\mathbb{E} \sum_{i = 1}^{n} (\hat{\theta}^{(u)}_j - \hat{\theta}_j), \mathbb{E} \sum_{i = 1}^{n} (\hat{\theta}_j - \hat{\theta}^{(l)}_j)\Big\} \leq C \left(\Gamma_1 \min\{n^{2/3} \sqrt{\log n},  \sqrt{kn} \log n\} + \Gamma_2 \log n \min\{n^{1/3}, k \log n\}\right),
	\end{equation*}
	where  $k$ denotes the number of constant pieces of the true quantile sequence $\theta^*$.
\end{theorem}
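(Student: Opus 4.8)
The plan is to split the average band width into a ``bulk'' part coming from indices in the set $\hat G$ and a ``boundary'' part coming from indices in $\hat G^{c}$, and then to reduce the entire statement to a single estimate on $\E[\hat B]$, where $\hat B$ is the (random) number of constant blocks of the isotonic quantile estimator $\hat\theta$. I describe the argument for the upper band; the lower band is symmetric, and since the monotonization step~(vi) only decreases each quantity $\hat\theta^{(u)}_j-\hat\theta_j$ (and only increases each $\hat\theta^{(l)}_j$), it suffices to prove the bound for the bands produced before step~(vi).

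\textbf{Reduction.} Decompose $[n]$ into the constant blocks of $\hat\theta$, of lengths $m_1,\dots,m_{\hat B}$ with $\sum_b m_b=n$. For $i\in\hat G$, step~(iv) gives $\hat\theta^{(u)}_i-\hat\theta_i\le \Gamma_1\sqrt{\log n}\,/\sqrt{\hat u_i-i+1}$; inside a fixed block $\hat u_i-i+1$ takes each value in $\{1,\dots,m_b\}$ at most once, so $\sum_{i\in\hat G}(\hat\theta^{(u)}_i-\hat\theta_i)\le 3\Gamma_1\sqrt{\log n}\sum_b\sqrt{m_b}$. For $i\in\hat G^{c}$ we only use $0\le\hat\theta^{(u)}_i-\hat\theta_i\le 1$; as every block of $\hat\theta$ contains at most $2\Gamma_2\log n$ indices of $\hat G^{c}$, we get $|\hat G^{c}|\le\min\{n,\,2\Gamma_2\log n\cdot\hat B\}$. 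Adding the two parts, applying Cauchy--Schwarz $\sum_b\sqrt{m_b}\le\sqrt{\hat B\,n}$, and taking expectations with Jensen's inequality ($\E\sqrt{\hat B n}\le\sqrt{n\,\E\hat B}$ and $\E\min\{n,2\Gamma_2\log n\,\hat B\}\le\min\{n,2\Gamma_2\log n\,\E\hat B\}$), Theorem~\ref{thm:width} follows once we show
\[
\E[\hat B]\ \le\ C\,\min\bigl\{\,n^{1/3},\ k\log n\,\bigr\}.
\]

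\textbf{Bounding $\E[\hat B]$.} This is the crux and the main obstacle. The structural point that makes the heavy tails of $\varepsilon$ irrelevant is that the block structure of $\hat\theta$ is controlled by the \emph{bounded} ``sign'' processes $S_j(t)=\sum_{i\le j}\bigl(\tau-\mathbb{1}(y_i<t)\bigr)$, $t\in\R$, rather than by the $y_i$ themselves: from the max--min representation of the isotonic quantile estimator, each upper level set $\{i:\hat\theta_i>t\}$ is an up-set $\{j_t,\dots,n\}$ whose endpoint $j_t$ is determined by one-sided inequalities on increments of $S_\cdot(t)$, and $\hat B$ is the number of distinct values of $j_t$. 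Since the increments of $S_\cdot(t)$ lie in $[\tau-1,\tau]$, every concentration step is an Azuma/Bernstein bound for bounded martingales, calibrated so the exceptional events have probability $O(n^{-c})$ for large $c$; on those events the trivial bound $\hat B\le n$ suffices. For the global bound $\E[\hat B]\lesssim n^{1/3}$, Assumption~A~\eqref{eq:assum} converts a deviation of size $\delta$ of $\hat\theta$ from $\theta^*$ on a block of length $\ell$ into drift of order $\tilde{C}\delta\ell$ in the relevant process, which the $\sqrt{\ell\log n}$ fluctuations cannot sustain unless $\delta\lesssim\sqrt{\log n/\ell}$; plugging this into a multiscale/peeling count over dyadic block lengths and using that the monotone $\hat\theta$ has total increase at most $1$ (bounded range) gives $n^{1/3}$ up to logarithms --- the same mechanism that underlies the classical $n^{-2/3}$ isotonic rate. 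For the adaptive bound $\E[\hat B]\lesssim k\log n$, split $[n]$ at the $k-1$ jumps of $\theta^*$; on each of the $k$ maximal stretches where $\theta^*$ is constant, the restriction of $\hat\theta$ is, up to $O(1)$ extra blocks absorbing its two neighbouring pieces, sandwiched between isotonic quantile fits of a genuinely constant signal, and the expected number of blocks of such a fit of length $\ell$ is $O(\log\ell)$ --- the quantile analogue of the Sparre--Andersen $\sum_{j\le\ell}1/j$ count for the number of vertices of the greatest convex minorant of a random walk. Summing over the $k$ stretches and adding $O(k)$ for the straddling blocks gives $k\log n$.

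\textbf{Where the difficulty lies.} The reduction and the global $n^{1/3}$ bound are routine given the standard shape-constrained/quantile-regression toolkit. The delicate part is the adaptive count: transplanting the Sparre--Andersen-type logarithmic vertex bound from the additive least-squares / greatest-convex-minorant picture to the non-additive quantile loss, and handling the dependence between the $k$ true pieces forced by the global monotonicity constraint (the fit on one stretch is influenced by data in the adjacent stretches), all while keeping every failure probability at the $O(n^{-c})$ level so that the estimate survives in expectation. This step is where essentially all of the work goes, and it is precisely what upgrades Theorem~\ref{thm:width} from a worst-case to an adaptive guarantee.
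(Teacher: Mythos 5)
Your reduction step is exactly the paper's: split the width over $\hat G$ and $\hat G^{c}$, bound the $\hat G$ part by harmonic sums and Cauchy--Schwarz across blocks, bound the $\hat G^{c}$ part by $2\Gamma_2\log n$ per block, and apply Jensen, so that everything hinges on the block-count bound $\E[\hat B]\le C\min\{n^{1/3},k\log n\}$. That is precisely the paper's Proposition~\ref{prop:pieces}, and it is the actual content of the theorem --- but you do not prove it; you sketch two mechanisms and explicitly concede that ``essentially all of the work'' lies in the adaptive count. As it stands the proposal is a correct reduction plus an unexecuted program for the key proposition.

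Two concrete problems with the sketched program. First, the adaptive $k\log n$ route via ``sandwiching'' the restriction of $\hat\theta$ to a constant stretch of $\theta^*$ between quantile fits of a constant signal cannot, as stated, control the number of constant pieces: being pointwise between two monotone functions places no bound on how many jumps the middle function has, and the global fit on a stretch genuinely depends on data outside it, so the local Sparre--Andersen count does not transfer through a sandwich. The paper avoids this entirely by bounding each jump probability directly: a perturbation argument gives the inclusion $\{\hat\theta_{i-1}<\hat\theta_i\}\subseteq\{\max_{l\le i-1}\tau(y_{l:i-1})<\min_{u\ge i}\tau(y_{i:u})\}$ (Lemma~\ref{lem:pieces_control}), monotonicity of $\theta^*$ replaces $y$ by $\varepsilon$ at the cost of $\theta^*_{u_i}-\theta^*_{l_i}$ (Lemma~\ref{lemma:prob_pieces}), and a random-walk fluctuation estimate (Lemma~\ref{lem:woodroofe_meyer_argument}) yields $\P[\hat\theta_{i-1}<\hat\theta_i]\lesssim\min_{l_i,u_i}\bigl((u_i-i+1)^{-1}+(i-l_i+1)^{-1}+(\theta^*_{u_i}-\theta^*_{l_i})^2\bigr)$; choosing $l_i,u_i$ inside the constant piece of $\theta^*$ containing $i$ and summing harmonic series gives $k\log n$, while a dyadic choice gives $n^{1/3}$. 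Second, even your global bound is claimed only ``up to logarithms'': a peeling argument with thresholds of order $\sqrt{\log n/\ell}$ produces $\E[\hat B]\lesssim n^{1/3}$ times polylog factors, which is strictly weaker than what the stated bound $C\Gamma_1 n^{2/3}\sqrt{\log n}$ requires, namely $\E[\hat B]\le Cn^{1/3}$ with no logarithmic loss (the paper's dyadic argument achieves this because the comparison term is $(\theta^*_{u_i}-\theta^*_{l_i})^2$ with no $\log n$ calibration). So both halves of the crucial block-count bound need the paper's (or an equivalent) argument; the sandwiching idea in particular would have to be abandoned or reformulated as a jump-probability bound.
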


\begin{remark}
It is easy to see that the above result implies a bound scaling like $\tilde{O}(\min\{n^{-1/3},k^{1/2} n^{-1/2}\})$ on $\frac{1}{n} \mathbb{E} \sum_{i = 1}^{n} (\hat{\theta}^{(u)}_j - \hat{\theta}^{(l)}_j)$ where we can interpret $\frac{1}{n} \mathbb{E} \sum_{i = 1}^{n} (\hat{\theta}^{(u)}_j - \hat{\theta}^{(l)}_j)$ as the average width of our confidence band. Such an automatic adaptive inference result appears to be new. The above inference result also implies adaptive estimation rates for the Isotonic Quantile Regression (IQR) estimator which was also not established before to the best of our knowledge.
\end{remark}
\subsection{Confidence bands: discrete to continuum}
We extend our confidence bands from the sequence model to the random design setting.  
Fix any positive integer $n$ and $0 < \alpha < 1.$ Given any sequence of data points $\{x_i,y_i\}_{i = 1}^{n} \in ([0,1] \times \R)^n$ we will now define two functions $U^{\alpha}_{\{x_i,y_i\}_{i = 1}^{n}}: [0,1] \rightarrow \R$ and $L^{\alpha}_{\{x_i,y_i\}_{i = 1}^{n}}: [0,1] \rightarrow \R$ which will play the role of $1 - \alpha$ confidence bands for an underlying non decreasing regression function. To avoid notational burden we will often drop the superscript and the subscripts and just refer to the functions as $U,L.$ It is to be understood from the context that they are a function of the set of data points and $\alpha.$


Let $x_{(j)}$ be the $j$th order statistic of $\{x_i\}_{i = 1}^{n}.$ Let $y_{(j)}$ denote the entry of $\{y_i\}_{i = 1}^{n}$ which corresponds to $x_{(j)}.$ Let $v \in \R^n$ such that $$v_{j} = y_{(j)}.$$ Now let $u,l \in \R^n$ be the outputs of the BAND($\Gamma_1$,$\Gamma_2$) algorithm applied to the input vector $v$ where $\Gamma_1$,$ \Gamma_2$ satisfy~\eqref{eq:cond1} and~\eqref{eq:cond2}.



For any $j \in [n]$, define 
\begin{align*}\label{eq:band}
U(x_{(j)}) = u_j, \,\,\,\,
L(x_{(j)}) = l_j.
\end{align*} 

\noindent 
This defines $U,L$ at the design points $\{x_i\}_{i = 1}^{n}.$ To define $U,L$ at other points we simply do a piece-wise constant interpolation as follows. 
For a general $x \in [0,1]$, define 
$x^{+} = \{\min x_i: x_i \geq x\}$.  If the defining set above is empty, define $x^{+} = 1.$
Similarly, define 
$x^{-} = \{\max x_i: x_i \leq x\}.$
If the defining set above is empty, define $x^{-} = 0.$
Now for any $x \in [0,1]$, define $U(x) = U(x^{+})$ and $L(x) = L(x^{-})$. 
Note that by definition, $U(x) \geq L(x)$ for all $x \in [0,1]$ and $U(x),L(x)$ are both non decreasing functions. 


\begin{proposition}(Coverage and Average Width of  Confidence Band)\label{prop:isoband}
Let $A \subset [0,1]$ be a finite disjoint union of intervals and $n$ be any positive integer. Let $x_1,\dots,x_n$ be $n$ i.i.d samples from $\Unif(A).$ Let $y_i = f(x_i) + \epsilon_i$, $f \in \mathcal{F}_{[0,1]\uparrow}$ and $\varepsilon_i \sim F$ are iid. 
Suppose the error distribution $F$ satisfies $\tau(F) = 0$ and Assumption $A$ \eqref{eq:assum}. Let $U \colon= U^{\alpha}_{\{x_i,y_i\}_{i = 1}^{n}}$ and $L \colon=L^{\alpha}_{\{x_i,y_i\}_{i = 1}^{n}}$ denote the corresponding confidence band functions. Then the following confidence statement holds:
	
	\begin{equation}\label{eq:confi}
	\mathbb{P}\big(L(x) \leq f(x) \leq U(x) \:\:\forall x \in [0,1]\big) \geq 1 - \alpha.
	\end{equation}
	
	\noindent
	Moreover, let $Z \sim \Unif(0,1)$ be independent of ${\{x_i,y_i\}_{i = 1}^{n}}.$ Then the following average confidence width bound holds:
	\begin{equation}\label{eq:width}
	\mathbb{E} \big[(U(Z) - L(Z)) \mathbb{I}(Z \in A)\big] \leq \Leb(A) \big[\log n \cdot  b_n + r_n].
	\end{equation} 
	Here $b_n$ (defined below) is the same as what appears in the bound (R.H.S  in Theorem~\ref{thm:width}) (except that the precise value of $C$ may be changed) on the average width of the confidence band in the sequence model setting.
	$$b_n = C \left(\Gamma_1 \min\{n^{2/3} \sqrt{\log n},  \sqrt{kn} \log n\} + \Gamma_2 \log n \min\{n^{1/3}, k \log n\}\right).$$
	In the display above, $k$ is the cardinality of the set $\{f(x): x \in [0,1]\}$ which is either a positive integer or $+\infty$. The additional term $r_n$ in~\eqref{eq:width} is the following:
	$$r_n = C' \Big( \frac{\log n}{n} + \frac{1}{n^2} \Big) ,$$
	where $C'$ is another universal constant independent of the problem parameters. 
\end{proposition}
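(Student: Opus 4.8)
The plan is to deduce both claims from the sequence--model results, Theorems~\ref{thm:valid} and~\ref{thm:width}, by conditioning on the unordered design set $\{x_1,\dots,x_n\}$ (equivalently, on the order statistics $x_{(1)} \le \cdots \le x_{(n)}$) and exploiting that the errors are independent of the design. The first observation is that, conditionally on $(x_{(1)},\dots,x_{(n)})$, the vector $v=(y_{(j)})_{j=1}^n$ is an instance of the sequence model~\eqref{eq:model} with $\theta^* = (f(x_{(1)}),\dots,f(x_{(n)})) \in \mathcal{M}$ (it lies in $\mathcal{M}$ since $f$ is nondecreasing and $[0,1]$-valued) and with iid errors $\sim F$: indeed $v_j = f(x_{(j)}) + \varepsilon_{\sigma(j)}$ where $\sigma$ is the sorting permutation, $\sigma$ is a function of $(x_i)_i$ alone and hence independent of $(\varepsilon_i)_i$, and a fixed permutation of iid errors is still iid. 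Note also that $\theta^*$ has at most $k$ constant pieces, where $k$ is as in the statement.

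\textbf{Coverage.} Apply Theorem~\ref{thm:valid} conditionally on $(x_{(j)})_j$ (the hypotheses~\eqref{eq:cond1}--\eqref{eq:cond2} on $\Gamma_1,\Gamma_2$ are assumed), and integrate, to obtain $\mathbb{P}\big(L(x_{(j)}) \le f(x_{(j)}) \le U(x_{(j)})\ \forall j\big) \ge 1-\alpha$. On this event I extend the inclusion to every $x\in[0,1]$ using monotonicity of $f$ and the interpolation rule: if $x^- = x_{(j)}$ is the largest design point $\le x$ then $L(x) = L(x^-) = l_j \le f(x_{(j)}) \le f(x)$, and $L(x)=0\le f(x)$ if no design point is $\le x$; symmetrically $U(x) = U(x^+) \ge f(x^+) \ge f(x)$, and $U(x)=1 \ge f(x)$ if no design point is $\ge x$. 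This gives~\eqref{eq:confi}.

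\textbf{Width.} Since $Z\sim\Unif(0,1)$ has $\mathbb{P}(Z\in A)=\Leb(A)$, I bound $\mathbb{E}\big[(U(Z)-L(Z))\mathbb{1}(Z\in A)\big]$ directly. When $Z$ lies strictly between consecutive design points $x_{(j)},x_{(j+1)}$, the interpolation gives $U(Z)-L(Z) = u_{j+1}-l_j$, which I split into two non-negative half-widths and one non-negative estimator jump:
\begin{equation*}
u_{j+1}-l_j = (u_{j+1}-\hat{\theta}_{j+1}) + (\hat{\theta}_{j+1}-\hat{\theta}_j) + (\hat{\theta}_j - l_j).
\end{equation*}
Weighting by the gap measures $s_j := \Leb([x_{(j)},x_{(j+1)})\cap A)$ and writing $M := \max_j s_j$, and using $\sum_j s_j(\hat{\theta}_{j+1}-\hat{\theta}_j) \le M(\hat{\theta}_n-\hat{\theta}_1) \le M$, the ``interior'' contribution is at most $M\big(\sum_{j=1}^n(u_j-\hat{\theta}_j) + \sum_{j=1}^n(\hat{\theta}_j - l_j) + 1\big)$. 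Taking expectations by first conditioning on $\{x_i\}$: then $M$ is deterministic, and applying Theorem~\ref{thm:width} conditionally on $\{x_i\}$ bounds each of $\mathbb{E}[\sum_j(u_j-\hat{\theta}_j)\mid\{x_i\}]$ and $\mathbb{E}[\sum_j(\hat{\theta}_j-l_j)\mid\{x_i\}]$ by the right-hand side of Theorem~\ref{thm:width}, which depends only on $n,k,\Gamma_1,\Gamma_2$ and hence is uniform over the conditioning. Finally, under the measure-preserving map $x\mapsto\Leb(A\cap[0,x])$ the $s_j$ become the spacings of $n$ iid $\Unif([0,\Leb(A)])$ points, so $\mathbb{E}[M] \lesssim \Leb(A)\,\frac{\log n}{n}$. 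Multiplying gives a bound $\lesssim \Leb(A)\big(\log n\cdot b_n + \frac{\log n}{n}\big)$ with $b_n$ as in the statement. The ``boundary'' part ($Z<x_{(1)}$ or $Z>x_{(n)}$, where $U(Z)-L(Z)\le 1$ and one of $L(Z)=0$, $U(Z)=1$ holds) carries $Z$-mass of expected size $\lesssim \Leb(A)/n$, and together with crude bookkeeping for atypical designs produces the remainder $r_n$; this yields~\eqref{eq:width}.

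\textbf{Main difficulty.} The genuinely delicate point is the continuum width bound: a test point almost surely falls strictly between two design points, so $U(Z)-L(Z)$ equals $u_{j+1}-l_j$ rather than $u_j - l_j$, i.e. one pays for crossing a block boundary of $\hat{\theta}$. The decomposition above shows this costs only one telescoped jump plus half-widths at neighbouring indices, but the quantitative price --- the extra $\log n$ factor and the $\frac{\log n}{n}$ term in $r_n$ --- is exactly the expected size of the \emph{largest} inter-design spacing (of order $\frac{\log n}{n}$) rather than a typical spacing (of order $\frac1n$). Controlling the dependence between this maximal spacing and the random total width, via conditioning on $\{x_i\}$ and the design-independence of Theorem~\ref{thm:width}'s bound, is what makes the estimate go through.
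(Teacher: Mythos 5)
Your proposal is correct, and it follows the paper's overall strategy --- condition on the design to reduce to the sequence model (the sorted responses form an instance of \eqref{eq:model} with $\theta^*=(f(x_{(j)}))_j\in\mathcal{M}$ and iid errors), get coverage at the design points from Theorem~\ref{thm:valid} and extend by monotonicity, and pay a maximal-spacing factor of order $\log n/n$ for the continuum width --- but your width step uses a genuinely different decomposition. The paper writes $U(Z)-L(Z)=(U(Z)-f(Z))+(f(Z)-L(Z))$, telescopes the \emph{true} function over the spacings (the jump term $f(Z^+)-f(Z^-)$ is controlled by the total variation of $f\leq 1$), and handles the maximal spacing through the good event $G=\{\max_j \Leb(I_j)\leq C\log n/n\}$ from Lemma~\ref{lem:orderstat}, whose failure probability is what produces the $n^{-2}$ piece of $r_n$. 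You instead decompose through the estimator, $u_{j+1}-l_j=(u_{j+1}-\hat{\theta}_{j+1})+(\hat{\theta}_{j+1}-\hat{\theta}_j)+(\hat{\theta}_j-l_j)$, telescope $\hat{\theta}$ (total jump $\leq 1$ since $\hat{\theta}\in\mathcal{M}$), and replace the good-event splitting by the observation that the Theorem~\ref{thm:width} bound is uniform over the conditioning on $\{x_i\}$, so one can simply multiply by $\E[\max_j s_j]\lesssim \Leb(A)\log n/n$. Your route buys two things: it only invokes the sums $\sum_j(u_j-\hat{\theta}_j)$ and $\sum_j(\hat{\theta}_j-l_j)$ that Theorem~\ref{thm:width} literally controls, whereas the paper's bound of $\mathbb{E}\sum_j[U(\xi_j)-f(\xi_j)]$ via Theorem~\ref{thm:width} implicitly needs one more step (e.g.\ passing through the coverage event, since $U-f=(U-\hat{\theta})+(\hat{\theta}-\theta^*)$); and no atypical-design bookkeeping is needed, so the $1/n^2$ term in $r_n$ is not even required. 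Note also that your final estimate comes out in the form $\Leb(A)\bigl(\tfrac{\log n}{n}\,b_n+\tfrac{\log n}{n}\bigr)$ with $b_n$ the sum-scale bound of Theorem~\ref{thm:width}; this implies the displayed inequality \eqref{eq:width} and is in fact the (average-width) form in which the proposition is invoked in the proof of Theorem~\ref{thm:regret}, so the discrepancy is only one of normalization, not of substance.
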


\begin{remark}
Proposition~\ref{prop:isoband} essentially guarantees a similar bound on the average width of our confidence band in the random design setting as that guaranteed by Theorem~\ref{thm:width} for the sequence model. This follows as $r_n$ is a lower order term compared to $b_n$.
\end{remark}

\noindent 
We collect several remarks explaining some aspects of our confidence band results.
\begin{remark}
The coverage of our confidence band relies on  Hoeffding's inequality, and is therefore valid for any finite sample size. 
\end{remark}

\begin{remark}
The width of our band is rate optimal since the expected width scales like $\tilde{O}(n^{-1/3})$ which is the optimal rate of estimation for isotonic functions. Moreover, the width of our band is adaptive to piecewise constant structure. If the true $\tau$ quantile function $f$ is piecewise constant with $k$ pieces, the width of our band scales like $\tilde{O}(\sqrt{k/n}).$ Note that this adaptivity is automatic in the sense that there is no explicit regularization necessary to achieve the adaptive rate. 
\end{remark}

\begin{remark}
For the above theorem, we only require a mild restriction on $F$; namely it satisfies assumption A in~\eqref{eq:assum}. In particular, no moment assumptions are required and Theorem~\ref{thm:widthinform} holds even when $F$ is the Cauchy distribution. 
\end{remark}

\begin{remark}
To set $\Gamma_1,\Gamma_2$ satisfying~\eqref{eq:cond1} and~\eqref{eq:cond2} our confidence band algorithm needs to know local growth parameters $\tilde{C},L$ (as in~\eqref{eq:assum}) of the underlying error distribution $F.$ Of course, in practice we would not know the error distribution. However, setting valid values of $\tilde{C},L$ so that the error distribution $F$ satisfies assumption A should not be a problem for most realistic distributions $F.$ We can safely set $L = 1$ and a small enough value of $\tilde{C}.$ 
\end{remark}

\section{Discussions} 
\label{sec:discussions} 
Here we discuss a couple of naturally related matters.
\begin{itemize}
    \item[(i)] Our confidence band method is based on the Isotonic Quantile Estimator.  One can alternatively use the Isotonic LSE, and construct an analogous confidence band. This construction would then be similar to the one  proposed in~\cite{yang2019contraction}. We claim that our proof technique can be extended to prove validity and average width guarantee for this band (based on LSE) as well. We do not include the proof here because of space considerations. This would provide a very different proof of the validity of the band compared to that in ~\cite{yang2019contraction}. The proof there uses a specific property of the Isotonic LSE called the NUNA (non increasing under neighbor averaging). This idea appears to be hard to generalize to the quantile setting. On the other hand, we find that our proof technique can be used both for the Isotonic LSE and the quantile estimator. It is also worth reiterating here that our adaptive bound guarantee on the average width appears to be the first one explicitly stated and proved for a isotonic or any other shape constrained confidence band.

    The main reason for us analyzing the Isotonic Quantile Estimator is that we want our confidence band to be robust to heavy tailed errors. For errors with no moments like the Cauchy distribution, Isotonic LSE performs poorly in practice, see Figure~\ref{fig:lsevsmedian}.  
    
    \item[(ii)] Theorem~\ref{thm:width} gives an adaptive bound on the average width of our confidence band in the sequence model. The main ingredient in the proof of Theorem~\ref{thm:width} is Proposition~\ref{prop:pieces} which bounds the average number of constant pieces of the Isotonic Quantile Estimator. This is one of the main technical contributions of this paper. A corresponding bound for the average number of constant pieces of the Isotonic LSE (when the errors are gaussian) is known; see Theorem $1$ in~\cite{MW00} and Theorem $4.2$ in~\cite{bellec2016adaptive}. However, controlling the number of constant pieces for the Isotonic Quantile Estimator has not been attempted before and our bound is potentially of independent interest. At a high level, our proof technique adapts the general proof strategy of~\cite{MW00} to the non gaussian and quantile setting. Essentially, we reduce the problem of bounding the number of pieces of the Isotonic Quantile Estimator to bounding the maxima of certain random walks with $\pm 1$ increments. We then prove the required bounds on the maxima of these random walks.

    \item[(iii)] A natural question that arises as a follow up to our work is whether similar adaptive confidence bands and consequently policies with adaptive regret bounds can be constructed for multivariate isotonic regression and other shape constraints like convexity. The extension to higher dimensions for isotonic regression appears to not be straightforward. Constructing finite sample valid adaptive confidence band methodology for other shape constraints like convexity also appears to be an open problem. We leave these intriguing follow up questions for future research.

    \item[(iv)] It is known in the Multi Armed Bandit literature that certain algorithms adapt to the gap between the means of the two arms. This gives instance dependent bounds and leads to logarithmic regret in easy problems when the gap between the means of the two arms is at least a constant; see~\cite{foster2020instance} and references therein. One can ask the question whether the same phenomenon is possible for the Isotonic Contextual Bandits problem initiated in this paper and whether the policy proposed in this paper can exhibit logarithmic regret for certain easy problems. We again leave this question for future research.

    
    

\end{itemize}

\section{Simulations}\label{sec:sims}
In this section, we collect some numerical experiments exploring the finite sample performance of our confidence bands. Our simulations are conducted in \texttt{R}. We fit the isotonic quantile regression functions using the R package \texttt{isotone}. 

\begin{enumerate}
    \item Figure~\ref{fig:gaussian_strict}: Consider $f(x) = x$ and $\theta^* = f(i/n)$ for $i \in 1,\dots,n$. We generate a dataset with $n=500$, and additive iid  Gaussian errors with mean zero and standard deviation $0.1$. We plot the data set, along with the isotonic median fit (in \texttt{RED}) in the right pane. The left pane plots the true median sequence in \texttt{BLACK}, and the confidence bands in \texttt{RED}. For these simulations, we use $\Gamma_1 = 0.5$, $\Gamma_2= 0.5$.


    \item Figure~\ref{fig:gaussian_piecewise}: We consider the same setting as in Figure~\ref{fig:gaussian_strict}, except that we change the function $f(x)$ to a piecewise constant function $f(x) = 0.1 + 0.2 \cdot \lfloor 5x \rfloor$. 
    
    \item Figure~\ref{fig:cauchy}: This figure is generated using the same median sequence as  Figure~\ref{fig:gaussian_strict}, but with additive Cauchy noise. We use a Cauchy distribution with $\mathrm{location}=0$ and $\mathrm{scale}=0.1$. We use $\Gamma_1 = 0.5$, $\Gamma_2=0.5$ to construct the confidence bands. In the left pane, we see that there are several data points which have large magnitude compared to the signal value. The signal takes values between $0$ and $1$, whereas several points are larger than $5$ in absolute value. In fact, in this instance there are a couple of points with magnitude more than $100$, which we deleted to achieve better visualization. In this heavy tailed case, our confidence band methodology gives a reasonable solution as shown in the right pane. 
    
    \item Figure~\ref{fig:lsevsmedian}: In the same setting as Figure~\ref{fig:cauchy}, we plot the isotonic median fit (in \texttt{RED}) and the isotonic least squares fit (in \texttt{BLUE}). We see that the isotonic least squares estimator performs poorly in this setting. This is an instance of the well known fact that for heavy tailed errors, least squares can perform poorly and the quantile estimators are significantly more robust. This motivates our confidence set construction using isotonic quantiles rather than the isotonic median.

    
    \item Figure~\ref{fig:cauchy_7}: We replicate a setting analogous to Figure~\ref{fig:cauchy}, but for the $0.7$-quantile. For the left pane, the data is generated as $y_i = \theta_i^* + \varepsilon_i$, where $\theta_i^* = f(i/n) $, and $f(x)= 0.1 + 0.8 \cdot x$. Thus the population $0.7$-quantile sequence is $\theta_i^* + q_{0.7}$,  where $q_{0.7}$ is the $0.7$-quantile of a Cauchy distribution with location zero and scale $0.1$. The pane on the right follows the same recipe, but uses a piecewise constant function $f(x) = 0.1 + 0.2 \cdot \lfloor 5 x \rfloor$. We use $\Gamma_1=1$, $\Gamma_2=0.75$. 
    
\end{enumerate}

\begin{figure}
\centering 
\begin{subfigure}[b]{0.45\linewidth}
    \centering
    \includegraphics[width= \linewidth]{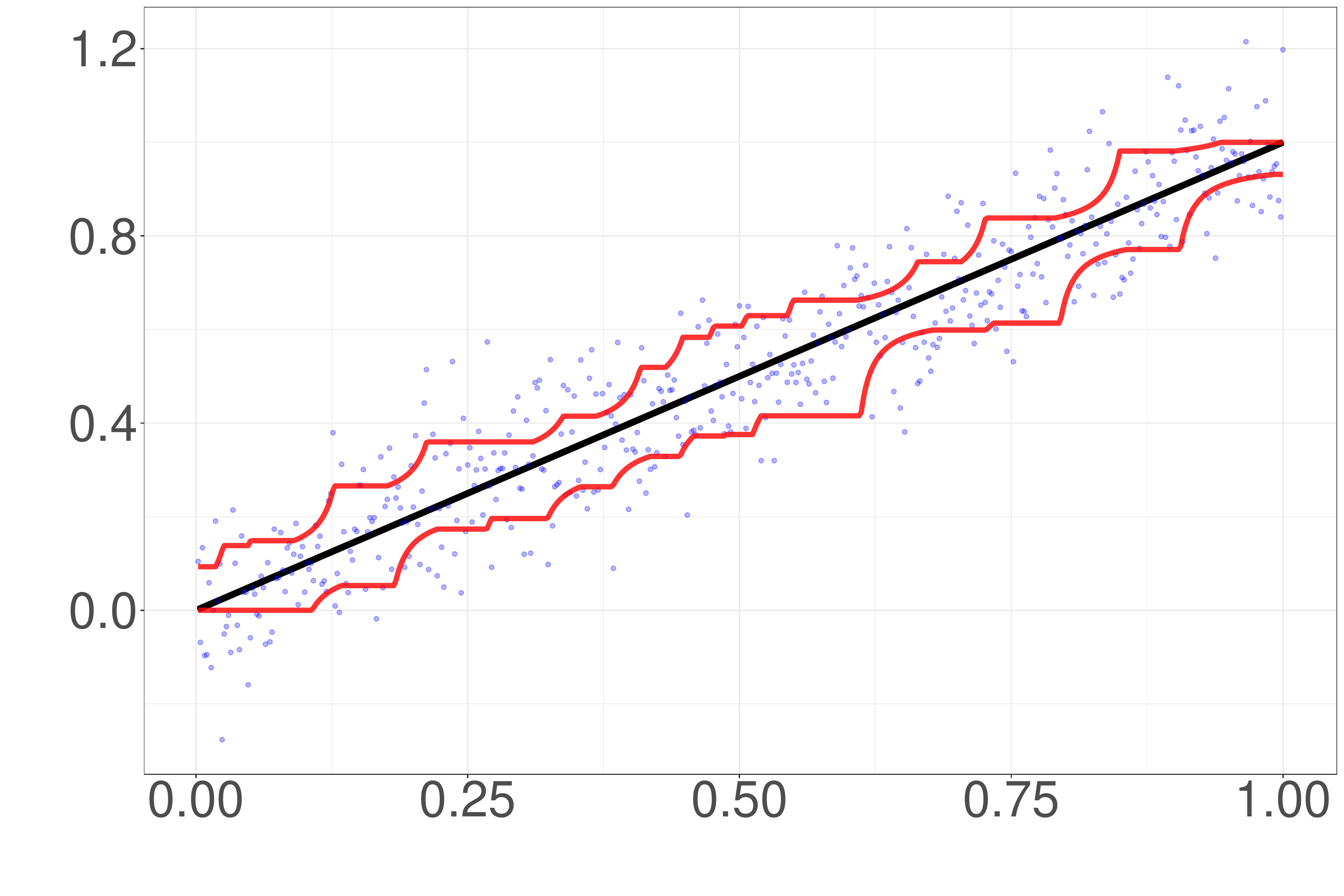}
\end{subfigure}
\begin{subfigure}[b]{0.45\linewidth}
    \centering
    \includegraphics[width=\linewidth]{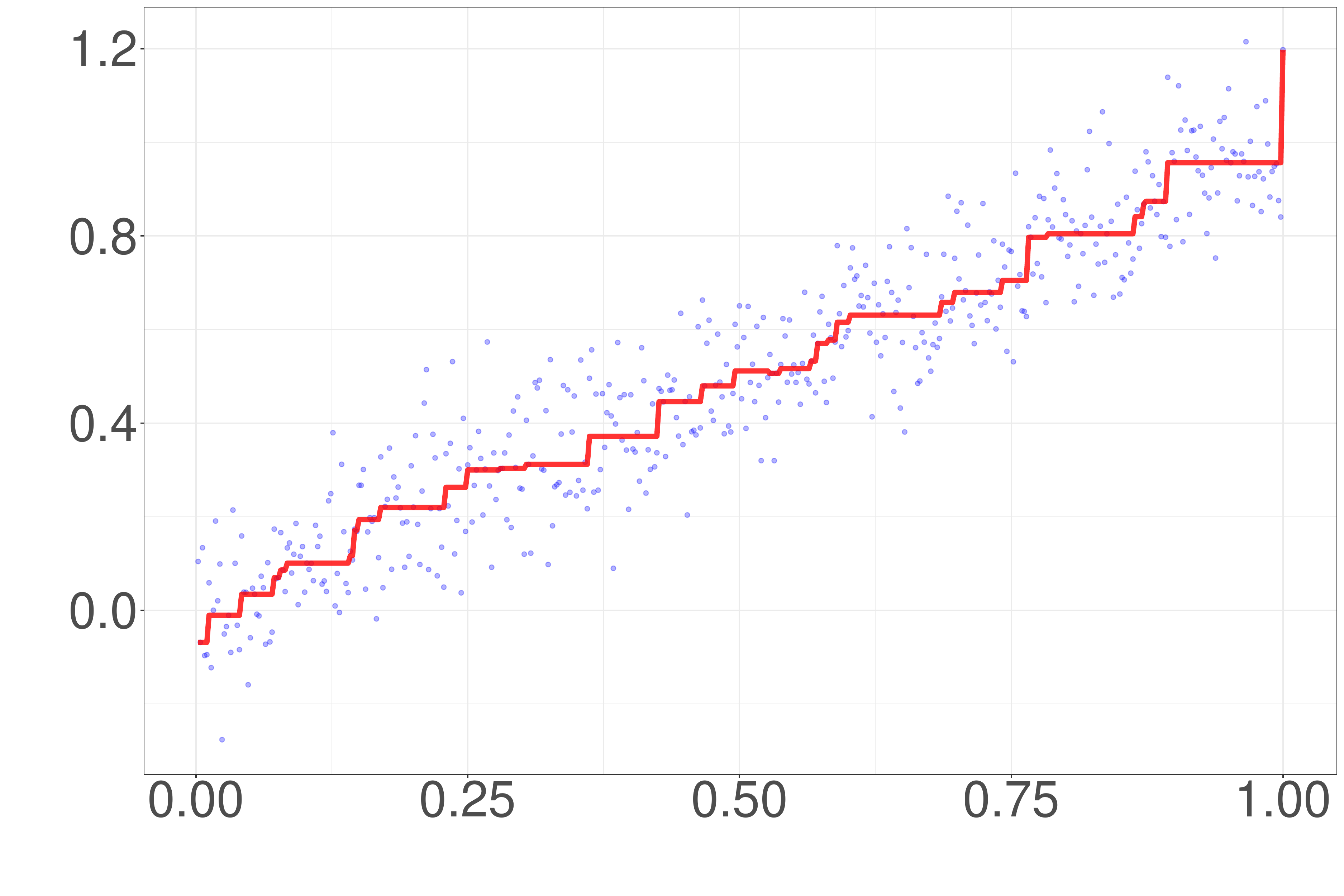}
\end{subfigure}
    \caption{Gaussian data ($\sigma=0.1$), $f(x)=x$, $n=500$. The left pane plots the true median sequence $\theta^*$ (in \texttt{BLACK}) and the confidence bands in \texttt{RED}. The right pane plots the isotonic median fit in \texttt{RED}.}
    \label{fig:gaussian_strict}
\end{figure}

\begin{figure}
\centering 
\begin{subfigure}[b]{0.45\linewidth}
    \centering
    \includegraphics[width= \linewidth]{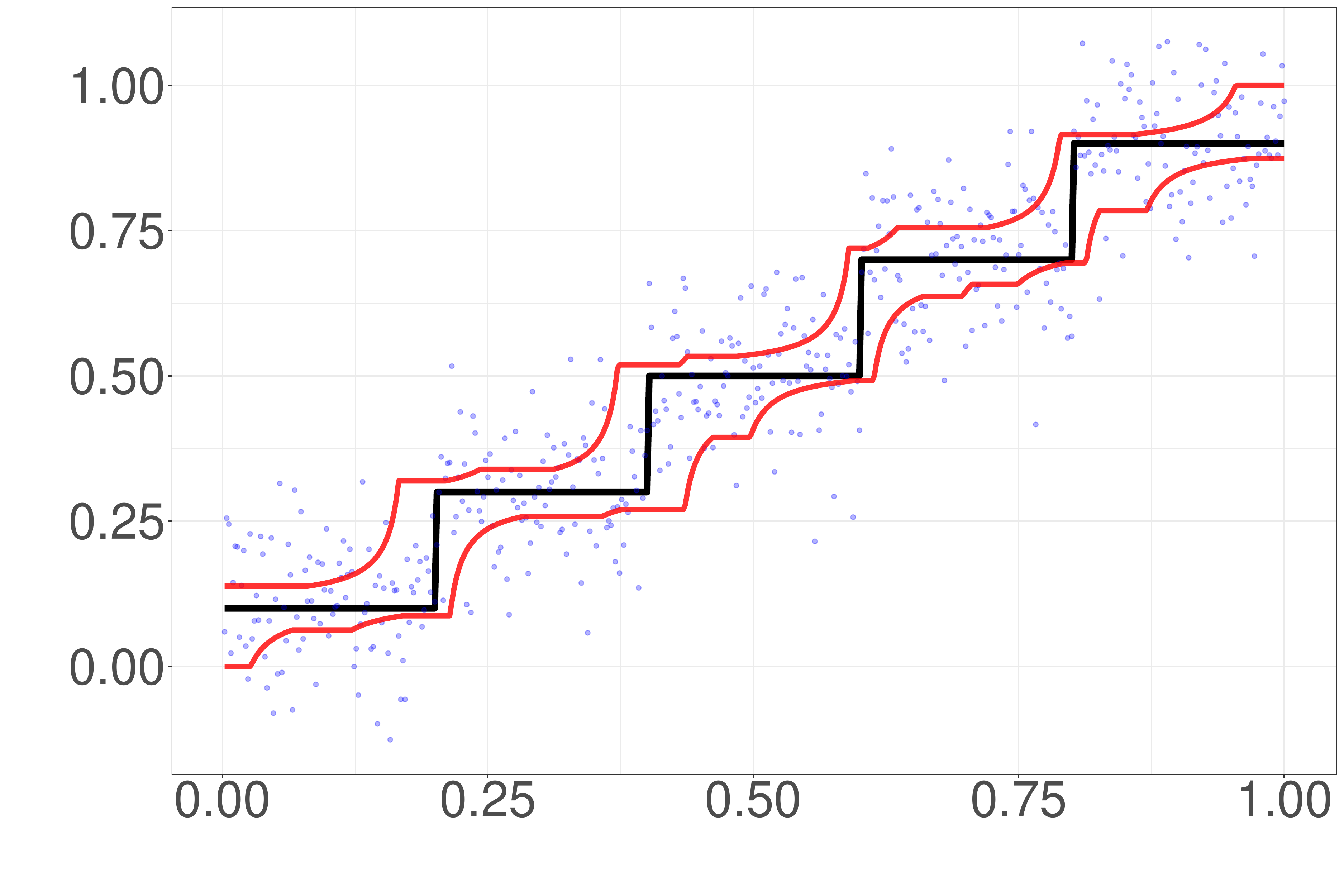}
\end{subfigure}
\begin{subfigure}[b]{0.45\linewidth}
    \centering
    \includegraphics[width=\linewidth]{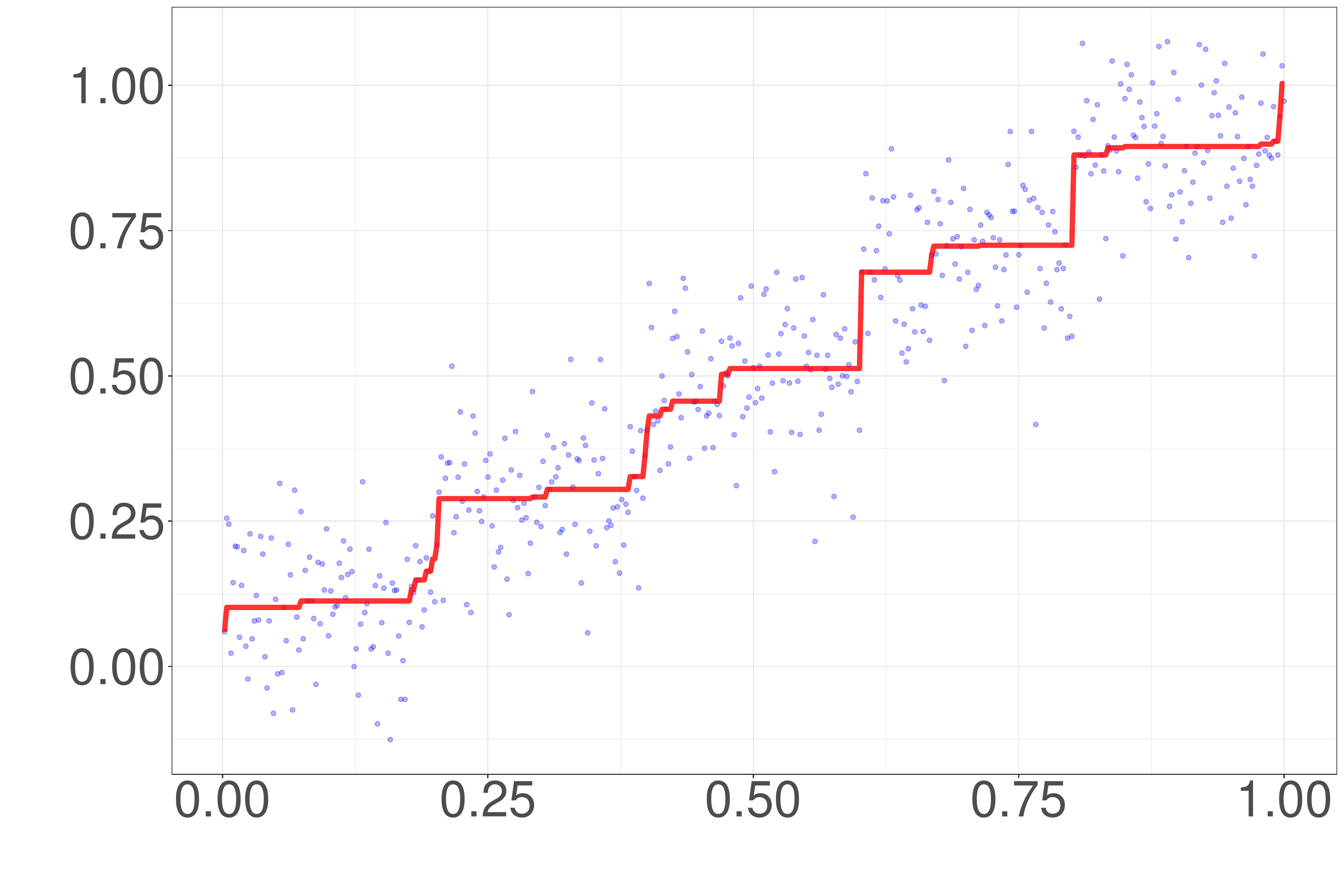}
\end{subfigure}
    \caption{Gaussian data ($\sigma=0.1$), piecewise constant function $f(x) = 0.1 + 0.2 \cdot \lfloor 5x \rfloor$, $n=500$. The left pane plots the true median sequence $\theta^*$ (in \texttt{BLACK}) and the confidence bands in \texttt{RED}. The right pane plots the isotonic median fit in \texttt{RED}.}
    \label{fig:gaussian_piecewise}
\end{figure}

\begin{figure}
\centering 
\begin{subfigure}[b]{0.45\linewidth}
    \centering
    \includegraphics[width= \linewidth]{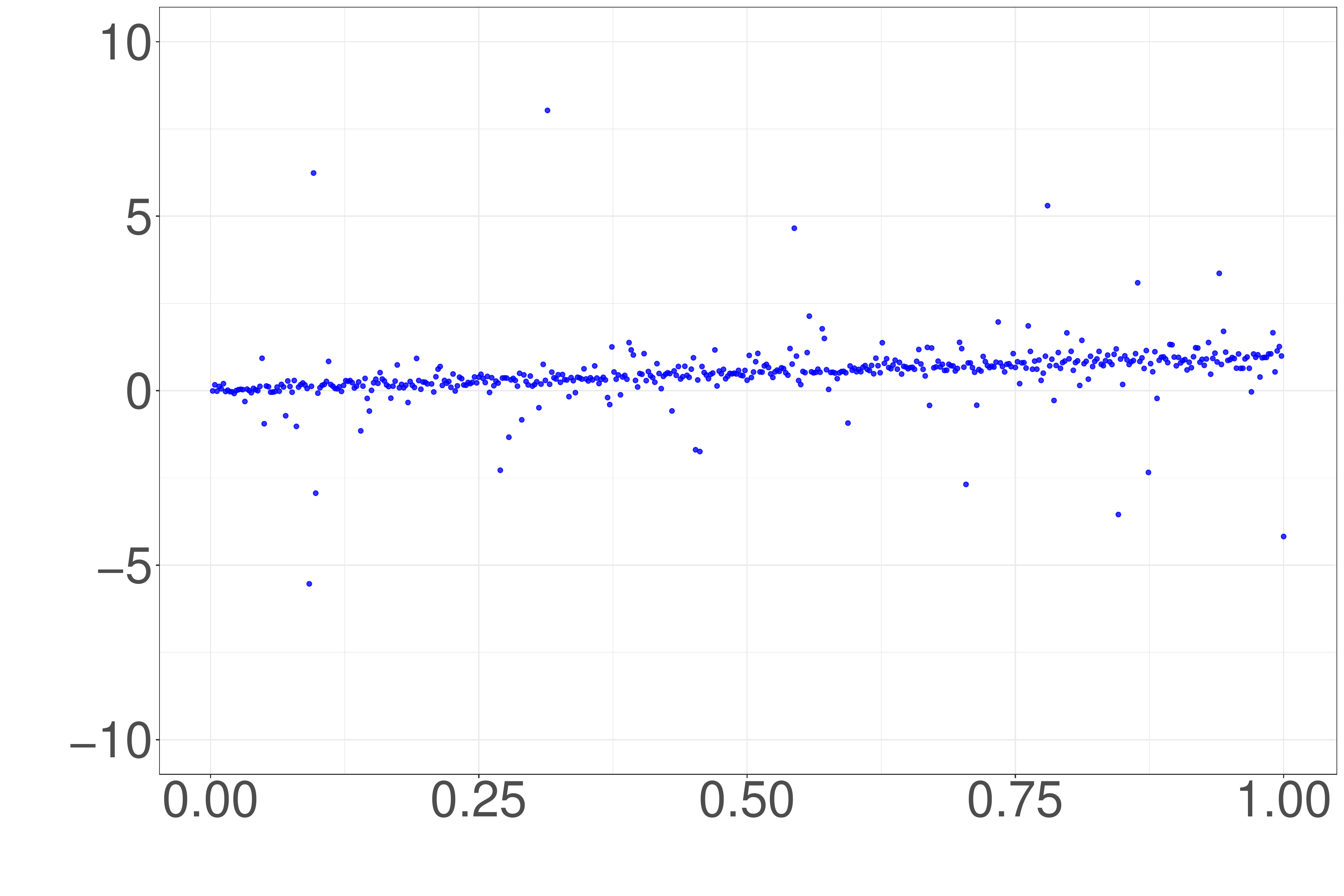}
\end{subfigure}
\begin{subfigure}[b]{0.45\linewidth}
    \centering
    \includegraphics[width=\linewidth]{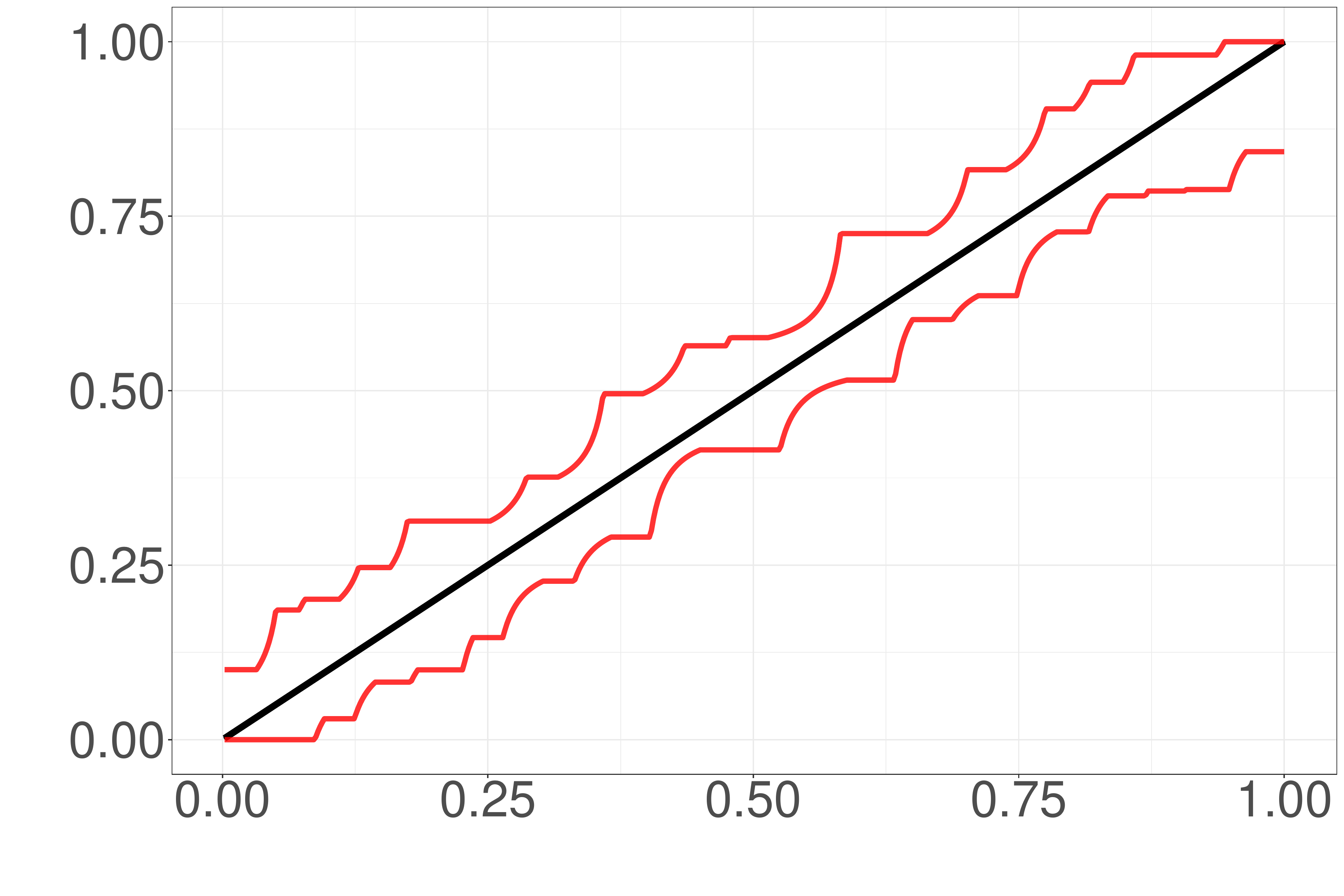}
\end{subfigure}
    \caption{Cauchy data truncated between $[-10,10]$, $\mathrm{location}=0, \mathrm{scale}=0.1$. $f(x)=x$, $n=500$. The left pane plots the truncated data. The right pane plots the true median sequence (in \texttt{BLACK}) and the confidence bands in \texttt{RED}. }
    \label{fig:cauchy}
\end{figure}

\begin{figure}
    \centering
    \includegraphics[scale=0.25]{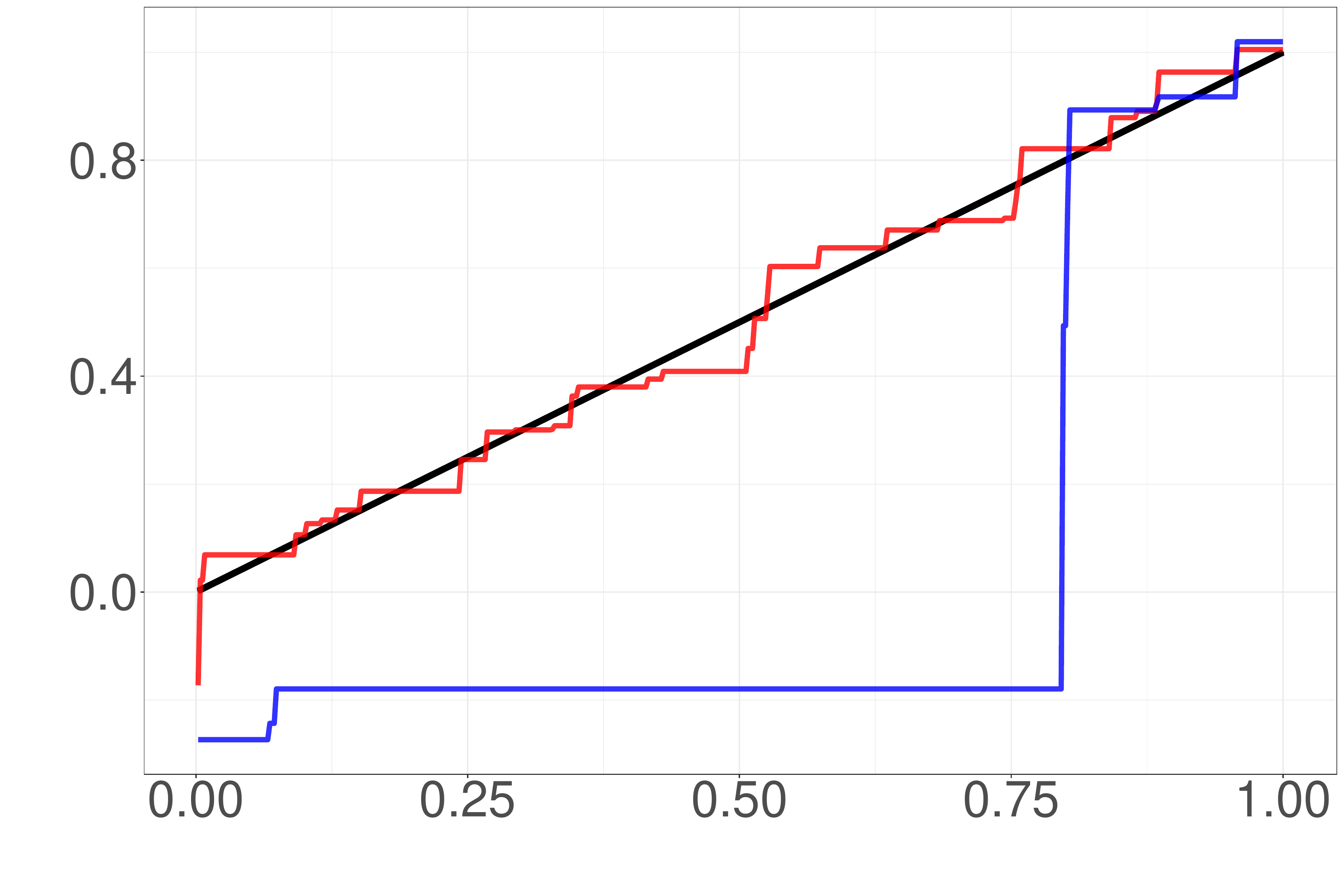}
    \caption{LSE vs. isotonic median, Cauchy data, $\mathrm{location}=0$, $\mathrm{scale}=0.1$, $n=500$. The \texttt{BLACK} line represents the true median sequence. We plot the isotonic median estimator in \texttt{RED}, while   the isotonic Least squares estimator is represented in \texttt{BLUE}.}
    \label{fig:lsevsmedian}
\end{figure}

\begin{figure}
\centering 
\begin{subfigure}[b]{0.45\linewidth}
    \centering
    \includegraphics[width= \linewidth]{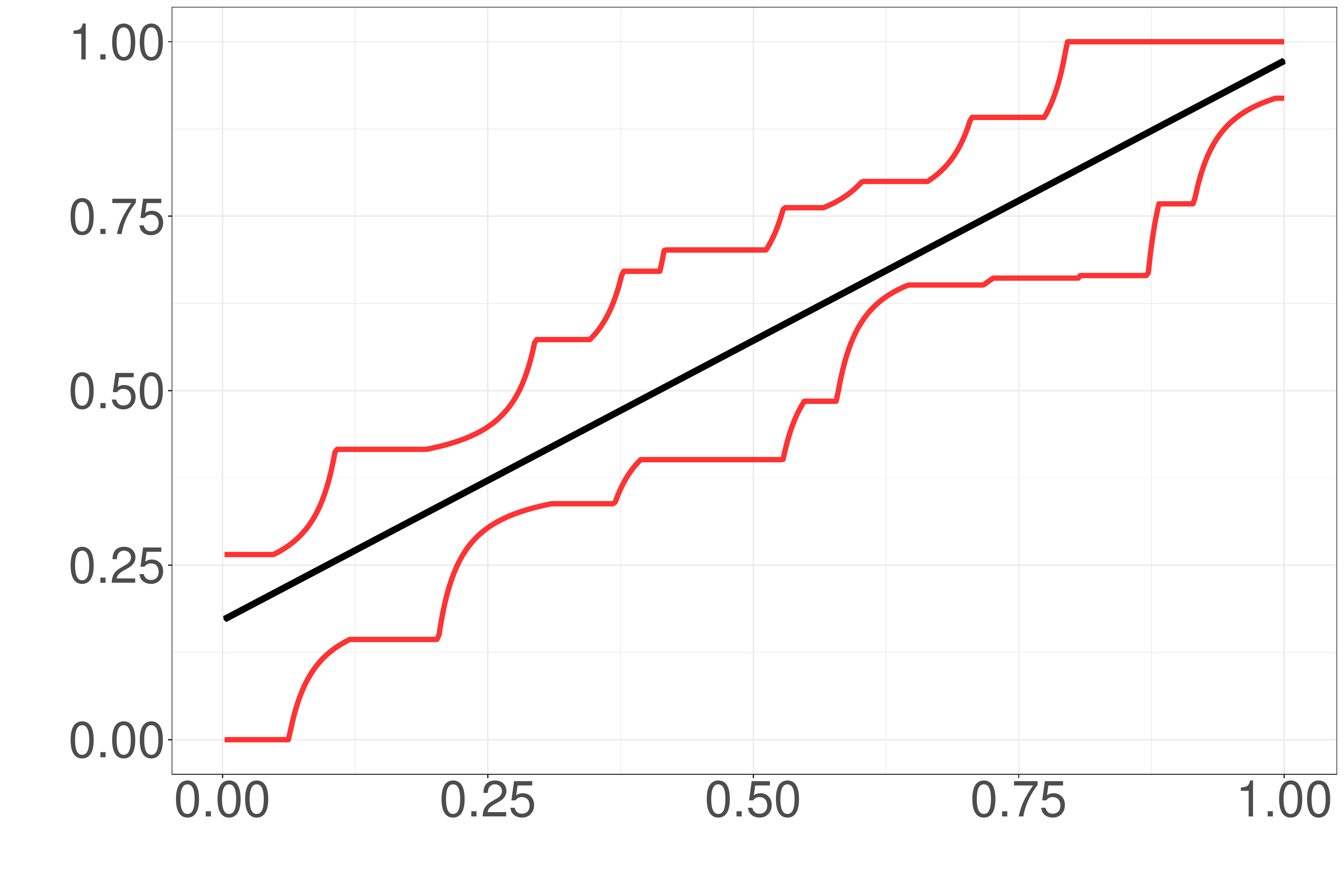}
\end{subfigure}
\begin{subfigure}[b]{0.45\linewidth}
    \centering
    \includegraphics[width=\linewidth]{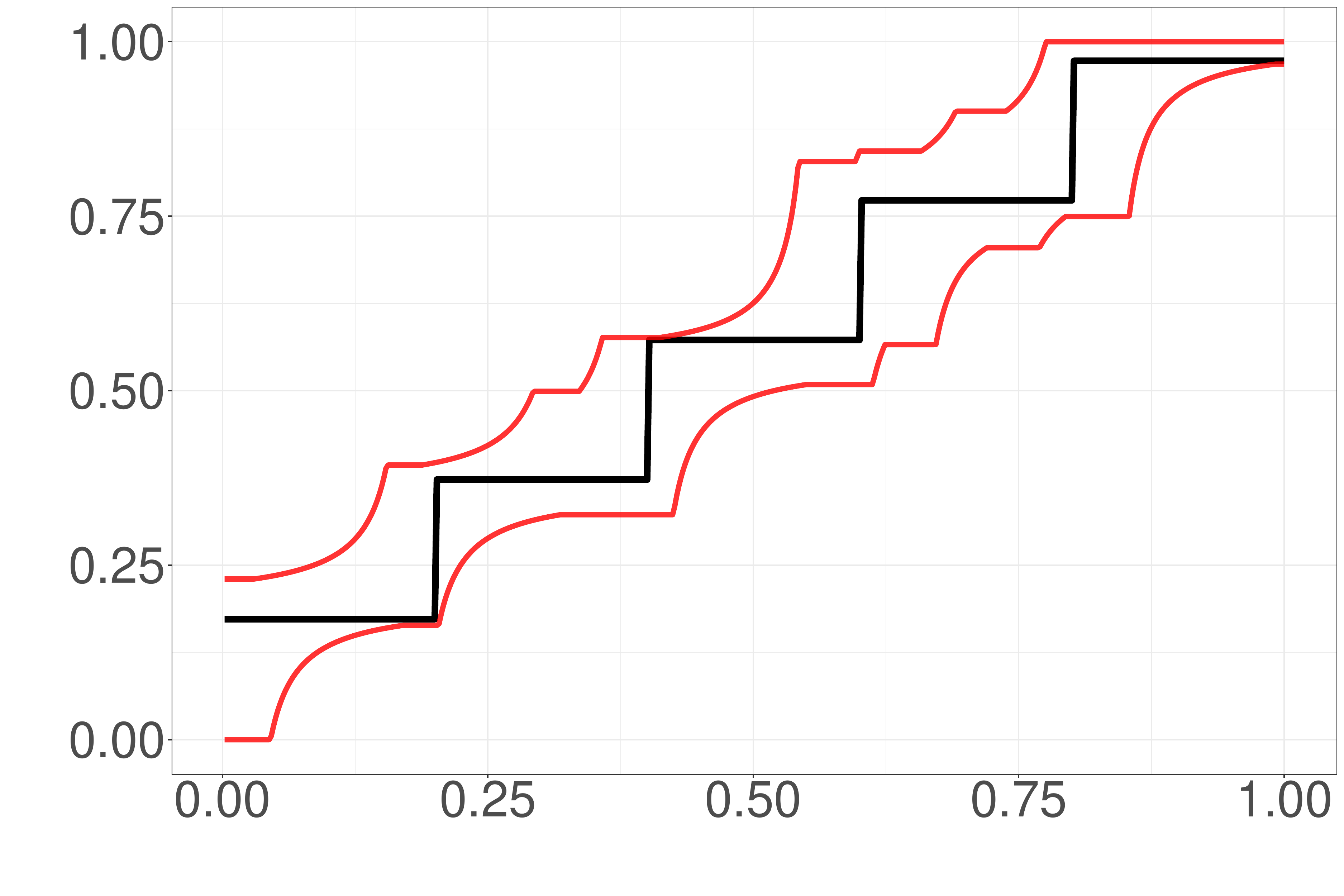}
\end{subfigure}
    \caption{Cauchy data, $\mathrm{location}=0$, $\mathrm{scale}=0.1$, $n=500$. The black curves represent the true $0.7$ quantile sequence (in \texttt{BLACK}) and the confidence bands are visualized in \texttt{RED}.  For the left pane, the true quantile sequence $\theta^*_i = f(i/n) + q_{0.7}$, where $f(x) = 0.1 + 0.8 \cdot x$ and $q_{0.7}$ is the $0.7$-quantile of a Cauchy distribution with location zero and scale $0.1$. For the right pane, $\theta^*_i = f(i/n) + q_{0.7}$, where $f(x) = 0.1 + 0.2 \cdot \lfloor 5 x \rfloor$.   }
    \label{fig:cauchy_7}
\end{figure}

\section{Proofs} 
\label{sec:proofs} 

\subsection{Regret bounds} 
\label{sec:upper_bound_proof} 

\subsubsection{Proof of Theorem~\ref{thm:regret}}
\begin{proof}[Proof of Theorem~\ref{thm:regret}]
We analyze the worst case regret of our policy. To this end,  for $i \in [K]$, define the ``good" event 
\begin{align}
G_i =  \cap_{j = 0}^{1}\{U_{j}^{(i)}(x) \geq F_{j}(x) \geq L_{j}^{(i)}(x)\:\:\forall x \in [0,1]\}. \label{eq:good_event} 
\end{align} 
Further, define the stopping time 
\begin{align}
 \tau = \min \Big\{i \in [K]: \Leb(\Unc^{(i - 1)}) <  \frac{\log T}{\sqrt{T}} \Big\} \label{eq:stopping_time}
\end{align} 
with the convention that the minimum over an empty set is $\infty$. 
Armed with this notation, we can decompose the regret as follows: 
\begin{align}
\mathrm{Regret}_{T}(\pi) &= \sum_{l = 1}^{T} \mathbb{E}[ F_*(X_{l}) - F_{A_l} (X_{l})]. \nonumber \\
&= \sum_{i \in [K]} \sum_{l = \bar{N}_{i-1}+1 }^{\bar{N}_i}\mathbb{E}[ F_*(X_{l}) - F_{A_l}(X_l)] \nonumber \\
&:= \sum_{i \in [K]} \mathbb{E}\Big[ \mathbb{1}(G_{i-1}, i < \tau) \sum_{l = \bar{N}_{i-1}+1 }^{\bar{N}_i} [ F_*(X_{l}) - F_{A_l}(X_l)] \Big] + T_2 \nonumber \\
&:= \sum_{i \in [K]} \mathbb{E}[R_i] + T_2. \label{eq:regret_decomp1} 
\end{align} 
The first term dominates the regret of the policy. We first control the second term $T_2$. We have, using $0\leq F_0, F_1 \leq 1$ \eqref{def:parameter-space}, 
\begin{align}
T_2 &\leq  \sum_{i \in [K]} N_i   \mathbb{P}[G_{i-1}^c] +  \sum_{i \in [K]} \mathbb{E}\Big[ \mathbb{1}( \{ \tau \leq i \} \cup G_{i-1}) \sum_{l = \bar{N}_{i-1}+1 }^{\bar{N}_i} [ F_*(X_{l}) - F_{A_l}(X_l)]  \Big] . \label{eq:T_2int}
\end{align} 
Observe that by our choice $\alpha_T = T^{-2}$ in our policy design, $\mathbb{P}[G_{i-1}^c]   \leq T^{-2}$ using Proposition \ref{prop:isoband}. Therefore, $\sum_{i \in [K]} N_i \mathbb{P}[G_{i-1}^c] \leq \sum_{ i\in [K]} N_i . T^{-2} = \frac{1}{T}$. To control the second term in \eqref{eq:T_2int}, note that on the event $G_{i-1}$, the $l^{th}$ observation contributes non-trivial regret only if $\{X_l \in \Unc^{(i-1)}\}$. Thus for $i \in [K]$, defining the sigma-field
$$\Sigma_i = \sigma(\{X_i,Y_{A_i}(X_i): 1 \leq i \leq \bar{N}_i\}),$$ 
we can control the second term as follows. 
\begin{align}
&\sum_{i \in [K]} \mathbb{E}\Big[ \mathbb{1}( \{ \tau \leq i \} \cup G_{i-1}) \sum_{l = \bar{N}_{i-1}+1 }^{\bar{N}_i} [ F_*(X_{l}) - F_{A_l}(X_l)]  \Big]  \nonumber \\
& \leq  \sum_{i \in [K]} \mathbb{E}\Big[ \mathbb{1}( \{ \tau \leq i \} \cup G_{i-1}) \sum_{l = \bar{N}_{i-1}+1 }^{\bar{N}_i} \mathbb{P}_i [X_l \in \Unc^{(i-1)}] \Big], \nonumber 
\end{align} 
where $\mathbb{P}_i[\cdot]$ and $\mathbb{E}_i[\cdot]$ denote the conditional probability and expectation respectively, conditioned on $\Sigma_{i-1}$. Note that conditioned on $\Sigma_{i-1}$, $\mathbb{P}_i[X_l \in \Unc^{(i-1)}] = \Leb(\Unc^{(i-1)})$ for all $l \in [\bar{N}_{i-1}, \bar{N}_i]$. Thus 
\begin{align}
&\sum_{i \in [K]} \mathbb{E}\Big[ \mathbb{1}( \{ \tau \leq i \} \cup G_{i-1}) \sum_{l = \bar{N}_{i-1}+1 }^{\bar{N}_i} [ F_*(X_{l}) - F_{A_l}(X_l)]  \Big]  \nonumber \\ 
&\leq \sum_{i \in [K]} N_i \mathbb{E}\Big[ \mathbb{1}( \tau \leq i ) \Leb(\Unc^{(i-1)})] \leq \sqrt{T} \log T, \nonumber 
\end{align} 
where the last inequality uses the definition of $\tau$ in \eqref{eq:stopping_time}. Plugging these bounds back into \eqref{eq:T_2int}, we have, 
\begin{align}
T_2 \leq \frac{1}{T} + \sqrt{T} \log T. \label{eq:T_2bound}
\end{align}

\noindent
We now turn to the main term in \eqref{eq:regret_decomp1}. 
For any $i \in [K]$ and $j \in \{0,1\}$, define $$M_i^{j} = \sum_{l = \bar{N}_{i-1}+1 }^{\bar{N}_i} \mathbb{1} (X_l \in \Unc^{(i - 1)}, A_l = j).$$ 
In words, $M_i^{j}$ counts the datapoints in epoch $i$ with contexts in $\Unc^{(i-1)}$ for which arm $j $ is pulled. 
Observe that given $\Sigma_{i - 2}$, we have, for $j \in \{0,1\}$,
\begin{align}
M^{j}_{i-1} \sim \mathrm{Bin}\Big(N_{i-1}, \mathrm{Leb}\Big( \Unc^{(i - 2)} \Big)/2 \Big). \nonumber 
\end{align}

\noindent
Thus for $i < \tau$, $\mathbb{E}_{i - 2} M^{j}_{i-1} \geq N_{i-1} \cdot \frac{ \log T}{2 \sqrt{T}} \geq \frac{1}{2} \log T$ if $N_{i-1} \geq \sqrt{T}$ which is true by our choice $N_1 \geq \sqrt{T}.$

For an appropriately small constant $0 < c < 1$ to be chosen later, define the events 
$$H_i = \cap_{j = 0}^{1} \{M^{j}_{i} > c \mathbb{E}_{i - 1} M^{j}_{i}\} ,\,\,\,\, i \in [K].$$

Using $0\leq F_0, F_1 \leq1$ and recalling the definition of $R_i$ from \eqref{eq:regret_decomp1}, we have, 
\begin{align}
\mathbb{E} \sum_{i\in[K]}  R_i  &\leq  \mathbb{E} \sum_{i \in [K]}  R_i  \mathbb{1}(H_{i - 1}) + \mathbb{E} \sum_{i \in [K]} N_i \mathbb{1}(H_{i - 1})^c \mathbb{1}(i < \tau) \nonumber \\
&\leq \mathbb{E} \sum_{i \in [K]}  R_i  \mathbb{1}(H_{i - 1}) + \mathbb{E} \sum_{i \in [K]} N_i  \mathbb{1}(i-1 < \tau) \mathbb{P}_{i-1}(H_{i - 1}^c ) \nonumber \\
&\leq \mathbb{E} \sum_{i \in [K]}  R_i  \mathbb{1}(H_{i - 1}) +  T^{-1}, \label{eq:main_term_bound} 
\end{align}
where the last inequality follows by conditioning on $\Sigma_{i - 2}$, noting that $\mathbb{1}(i-1 < \tau)$ is $\Sigma_{i-2}$ measurable, and then applying the binomial tail probability bound given in Lemma~\ref{lem:bin_dev}. The value $c$ is chosen so that $\mathbb{E}  \mathbb{1}(i-1 < \tau) \mathbb{1}(H_{i - 1})^{c} \leq T^{-2}.$

%


Next, we observe that
\begin{align}
&R_i \mathbb{1}(H_{i - 1}) = \sum_{l = \bar{N}_{i-1}+1 }^{\bar{N}_i} [ F_*(X_{l}) - F_{A_l}(X_{l})] \mathbb{1}(G_{i - 1} \cap \{X_l \in \Unc^{(i - 1)} \} \cap \{ i < \tau \} \cap H_{i - 1}) \nonumber\\
&\leq \sum_{l = \bar{N}_{i-1}+1 }^{\bar{N}_i} [(U^{(i-1)}_1(X_{l}) - L^{(i-1)}_1(X_{l})) + (U^{(i-1)}_0(X_{l}) - L^{(i-1)}_0(X_{l}) )] \mathbb{1}( \{X_l \in \Unc^{(i - 1)} \} \cap H_{i - 1}). \label{eq:int2} 
\end{align}

The inequality above holds by the following argument: on the event $G_{i - 1}$ and $\{X_{l} \in \Unc^{(i - 1)}\}$, assuming $F_1(X_{l}) \geq F_0(X_{l})$,

\begin{align*}
&F_{*}(X_l) - F_{A_l} (X_{l}) \leq F_1(X_{l}) - F_0(X_{l}) \nonumber \\
& \leq (U^{(i-1)}_1(X_{l}) - L^{(i-1)}_1(X_{l})) + (U^{(i-1)}_0(X_{l}) - L^{(i-1)}_0(X_{l}) ) + (L^{(i-1)}_1(X_{l}) - U^{(i-1)}_0(X_{l})) \nonumber \\
&\leq (U^{(i-1)}_1(X_{l}) - L^{(i-1)}_1(X_{l})) + (U^{(i-1)}_0(X_{l}) - L^{(i-1)}_0(X_{l}) ) , \nonumber 
\end{align*}
where the last inequality follows as $X_l \in \Unc^{(i - 1)}$. The same chain of inequalities hold if $F_1(X_{l}) < F_0(X_{l}).$ 
Plugging this back into \eqref{eq:int2}, we have, 
\begin{align*}
&R_i  \mathbb{1}(H_{i - 1}) \leq \\& \sum_{l = \bar{N}_{i-1}+1 }^{\bar{N}_i} [(U^{(i-1)}_1(X_{l}) - L^{(i-1)}_1(X_{l})) + (U^{(i-1)}_0(X_{l}) - L^{(i-1)}_0(X_{l})] \mathbb{1}(\{X_l \in \Unc^{(i - 2)} \} \cap H_{i - 1}) 
\end{align*}
since $\Unc^{(i - 1)} \subseteq \Unc^{(i - 2)}$.  Evaluating the conditional expectation given $\Sigma_{i-2}$ we get
\begin{align*}
&\mathbb{E}_{i - 2} R_i  \mathbb{1}(H_{i - 1}) \leq \\&\mathbb{E}_{i - 2} \sum_{l = \bar{N}_{i-1}+1 }^{\bar{N}_i} [(U^{(i-1)}_1(X_{l}) - L^{(i-1)}_1(X_{l})) + (U^{(i-1)}_0(X_{l}) - L^{(i-1)}_0(X_{l})] \mathbb{1}(\{X_l \in \Unc^{(i - 2)} \} \cap H_{i - 1}). 
\end{align*}


\noindent
Conditioning further on $M_{i - 1}^{1}$ we can apply our confidence width bound as given in Proposition~\ref{prop:isoband} with $A = \Unc^{(i - 2)}$ to obtain that 

\begin{align*}
&\mathbb{E}\Big[ \sum_{l = \bar{N}_{i-1}+1 }^{\bar{N}_i} [(U^{(i-1)}_1(X_{l}) - L^{(i-1)}_1(X_{l})] \mathbb{1}(X_l \in \Unc^{i - 2}) \mathbb{1}(H_{i - 1})  \mathbb{1}(i < \tau) \Big| \Sigma_{i-2}, M_{i - 1}^{1} \Big]  \\
& \leq  N_i \Leb(\Unc^{(i - 2)}) \big[C \Gamma_1 (\log T)^{3/2} \min\{(M^{1}_{i - 1})^{-1/3},\sqrt{k (M^{1}_{i - 1})^{-1/2} \log (eM^{1}_{i - 1}/k)}\} + \mathrm{Rem}\big] \mathbb{1}(M^{1}_{i-1} > c \mathbb{E}_{i - 2} M^{1}_{i-1})   \\
& \leq  N_i \Leb(\Unc^{(i - 2)}) \big[C \Gamma_1 (\log T)^{3/2} \min\{(N_{i - 1} \Leb(\Unc^{(i - 2)}))^{-1/3},\sqrt{k (N_{i - 1} \Leb(\Unc^{(i - 2)}))^{-1} \log T}\} + \mathrm{Rem}\big]
\end{align*}
where $\mathrm{Rem}$ consists of some lower order terms left to be verified by the reader. The other term can be handled analogously.

Now, taking expectation on both sides of the last display and bounding $\Leb(\Unc^{(i - 2)})$ by $1$, we can conclude that 
\begin{align*}
\mathbb{E} R_i  \mathbb{1}(H_{i - 1}) \leq C N_i \big[ \Gamma_1 (\log T)^{3/2} \min\{(N_{i - 1})^{-1/3},\sqrt{k (N_{i - 1})^{-1/2} \log T}\}
\end{align*}

\noindent 
Finally, we can sum over $i \in [K]$ and notice that $\sum_{i \in [K]} \frac{ N_i }{N_{i-1}^{1/3}} \leq C T^{2/3}$ and $\sum_{i \in [K]} \frac{ N_i }{N_{i-1}^{1/2}} \leq C T^{1/2}.$ The proof is complete by plugging this estimate back into \eqref{eq:main_term_bound} and \eqref{eq:regret_decomp1}.





\end{proof}

\subsubsection{Proof of Lemma \ref{lem:lower}} 
 \begin{proof}
 Our proof will follow the universal strategy of lower bounding the minimax regret in terms of an appropriate Bayes regret. We first derive the lower bound over $ \mathcal{D}(\tilde{C},L)$. Throughout, we assume that the error distribution is centered Gaussian, with an appropriate variance such that $\varepsilon$ satisfies Assumption A \eqref{eq:assum} with parameters $\tilde{C}$ and $L$. 
 
 We first construct an appropriate sub-class of our parameter space. We divide the interval $[0,1]$ into $m$-equal sub-intervals---the parameter $m>1$ will be chosen appropriately. For notational convenience, we set $\mathcal{I}_j = [(j-1)/m, j/m]$, $1\leq j \leq m$.  Set $\Sigma_m = \{\pm 1\}^m$. For each $\sigma \in \Sigma_m$, we construct a pair $(F_0, F_1)$ as follows: 
 if $\sigma_i =1$, we set $F_1(x) = i/m$ and $F_0(x) = (i-1)/m$ on the $i^{th}$ interval. On the contrary, if $\sigma_i = -1$, we set $F_1(x) = (i-1)/m$ and $F_0(x) = i/m$ on the $i^{th}$-interval. It is easy to see that the pair $(F_0^{\sigma}, F_1^{\sigma})$ constructed here is a valid parameter pair. Let $\mathcal{C}_m = \{(F_0^{\sigma}, F_1^{\sigma}): \sigma \in \Sigma_m\}$. Then we immediately have, 
 \begin{align}
 \max_{D \in \mathcal{D}(\tilde{C},L)} \mathrm{Regret}_T(\pi) \geq \frac{1}{2^m} \sum_{\sigma \in \Sigma_m} \mathrm{Regret}_T^{\sigma}(\pi), \label{eq:minimax_bayes} 
 \end{align}
 where $\mathrm{Regret}_T^{\sigma}(\pi)$ denotes the regret incurred by the policy $\pi$  under the parameter pair $(F_0^{\sigma}, F_1^{\sigma})$. Let $A^{\sigma}_*(i)$ denote the oracle optimal arm at round $i$ under the parameter pair $(F_0^{\sigma}, F_1^{\sigma})$. This implies
 \begin{align}
 \mathrm{Regret}_T^{\sigma}(\pi) &= \sum_{i=1}^{T} \mathbb{E}_{\sigma}[(F_*(X_i) - Y_i(A_i)  ) ] \nonumber\\
 &= \sum_{j=1}^{m} \sum_{i=1}^{T} \mathbb{E}_{\sigma} [|F_0^{\sigma}(X_i) - F_1^{\sigma}(X_i) | \mathbf{1}(X_i \in \mathcal{I}_j, A_i \neq A^{\sigma}_*(i)] \nonumber\\
 &\geq \frac{1}{m} \sum_{j=1}^{m} \sum_{i=1}^{T} \mathbb{P}_{\sigma}\Big[ X_i \in \mathcal{I}_j, A_i \neq \frac{1+\sigma_j}{2} \Big], \nonumber
 \end{align} 
 where the last inequality follows from our construction of $(F_0^{\sigma}, F_1^{\sigma})$. This automatically relates the regret of any policy to the corresponding \emph{inferior sampling rate}.  Plugging this back into \eqref{eq:minimax_bayes}, we obtain that 
 \begin{align}
 \max_{D \in \mathcal{D}(\tilde{C},L)} \mathrm{Regret}_T(\pi) &\geq \frac{1}{m \cdot 2^m} \sum_{t=1}^{T} \sum_{\sigma \in \Sigma_m} \sum_{j=1}^{m} \mathbb{P}_\sigma \Big[ X_t \in \mathcal{I}_j, A_t \neq \frac{1 + \sigma_j}{2} \Big] \nonumber \\
 &= \frac{1}{m \cdot 2^m} \sum_{j=1}^{m} \sum_{t =1}^{T} \sum_{\sigma_{[-j]} \in \Sigma_{m-1}} \sum_{i \in \{0,1\}} \mathbb{E}^{t-1}_{\sigma^{i}_{[-j]}} \mathbb{P}_X [ A_t \neq i , X_t \in \mathcal{I}_j], \nonumber
  \end{align} 
  where $\sigma^{i}_{[-j]} = (\sigma_1, \cdots, \sigma_{j-1}, 2i-1, \sigma_{j+1}, \cdots, \sigma_m)$, $\mathbb{E}^{t-1}_{\sigma}[\cdot]$ represents the joint distribution of the process over the first $(t-1)$ rounds, and $\mathbb{P}_X$ denotes the law of the new observation $X_t$. Further simplifying, we have, 
  \begin{align}
  \max_{D \in \mathcal{D}(\tilde{C},L)} \mathrm{Regret}_T(\pi) \geq \frac{1}{m^2 \cdot 2^m} \sum_{j=1}^m \sum_{t=1}^{T} \sum_{\sigma_{[-j]} \in \Sigma_{m-1}} \sum_{i \in \{0,1\}} \mathbb{E}^{t-1}_{\sigma^{i}_{[-j]}} \mathbb{P}_X^j [ A_t \neq i ], \nonumber 
  \end{align} 
  where $\mathbb{P}_X^j$ denotes the conditional distribution of $X_t$ given that $\{X_t \in \mathcal{I}_j\}$. We now observe that $\sum_{i \in \{0,1\}} \mathbb{E}^{t-1}_{\sigma^{i}_{[-j]}} \mathbb{P}_X^j [ A_t \neq i ]$ is the sum of Type I and Type II errors in a binary hypothesis testing problem. Thus using \cite[Theorem 2.2(iii)]{Tsybakovbook}, we have, 
  \begin{align}
  \max_{D \in \mathcal{D}(\tilde{C},L)} \mathrm{Regret}_T(\pi)  &\geq \frac{1}{m^2 \cdot 2^m} \sum_{j=1}^m \sum_{t=1}^{T} \sum_{\sigma_{[-j]} \in \Sigma_{m-1}} \exp\Big(- \mathrm{D}_{KL} (\mathbb{P}^{t-1}_{\sigma^1_{[-j]}} \times \mathbb{P}_X^j \|  \mathbb{P}^{t-1}_{\sigma^{0}_{[-j]}} \times \mathbb{P}_X^j ) \Big) \nonumber \\
  &= \frac{1}{m^2 \cdot 2^m} \sum_{j=1}^m \sum_{t=1}^{T} \sum_{\sigma_{[-j]} \in \Sigma_{m-1}} \exp\Big(- \mathrm{D}_{KL} (\mathbb{P}^{t-1}_{\sigma^1_{[-j]}}  \|  \mathbb{P}^{t-1}_{\sigma^{0}_{[-j]}} ) \Big) \label{eq:int_lower_bdd} 
  \end{align} 
  using the independence of $X_t$ and the past data. At this point, we require an upper bound on $\mathrm{D}_{KL} (\mathbb{P}^{t-1}_{\sigma^1_{[-j]}}  \|  \mathbb{P}^{t-1}_{\sigma^{0}_{[-j]}} )$. Let $\mathscr{F}_t^{+}$ denote the filtration corresponding to the policy $\pi$. Using the chain rule for KL-divergence, we obtain that 
  \begin{align}
  \mathrm{D}_{KL} (\mathbb{P}^{t}_{\sigma^1_{[-j]}}  \|  \mathbb{P}^{t}_{\sigma^{0}_{[-j]}} ) =  \mathrm{D}_{KL} (\mathbb{P}^{t-1}_{\sigma^1_{[-j]}}  \|  \mathbb{P}^{t-1}_{\sigma^{0}_{[-j]}} ) + \mathbb{E}^{t-1}_{\sigma^{1}_{[-j]} } \mathbb{E}_X \Big[ \mathrm{D}_{KL} \Big( \mathbb{P}_{\sigma^{1}_{[-j]} }^{Y_t(A_t) | \mathscr{F}_{t-1}^+} \| \mathbb{P}_{\sigma^{0}_{[-j]} }^{Y_t(A_t) | \mathscr{F}_{t-1}^+}  \Big)  \Big]. \nonumber 
  \end{align} 
  Observe that the second term has non-zero contribution to the divergence provided $X_t \in \mathcal{I}_j$. In this case, the divergence is bounded by that between two gaussian distributions with means $i/m$ and $(i-1)/m$ respectively, and with the same variance. Thus there exists a universal constant $C'>0$ (independent of $T$) such that 
  \begin{align}
  \mathrm{D}_{KL} (\mathbb{P}^{t}_{\sigma^1_{[-j]}}  \|  \mathbb{P}^{t}_{\sigma^{0}_{[-j]}} ) \leq   \mathrm{D}_{KL} (\mathbb{P}^{t-1}_{\sigma^1_{[-j]}}  \|  \mathbb{P}^{t-1}_{\sigma^{0}_{[-j]}} ) + \frac{C'}{m^3}. \nonumber 
  \end{align} 
  By induction, we obtain that $ \mathrm{D}_{KL} (\mathbb{P}^{t}_{\sigma^1_{[-j]}}  \|  \mathbb{P}^{t}_{\sigma^{0}_{[-j]}} ) \leq C' \frac{t}{m^3}$. Plugging this back into \eqref{eq:int_lower_bdd}, 
  \begin{align}
  \max_{D \in \mathcal{D}(\tilde{C},L)} \mathrm{Regret}_T(\pi) &\geq \frac{1}{m^2 \cdot 2^m} \sum_{j=1}^m \sum_{t=1}^{T} \sum_{\sigma_{[-j]} \in \Sigma_{m-1}} \exp(- C \frac{T}{m^3} ) \geq \frac{T}{2 m} \exp(-C' \frac{T}{m^3} ). \nonumber 
  \end{align} 
  Finally, we choose $m = T^{1/3}$. This provides the lower bound $\max_{D \in \mathcal{D}(\tilde{C},L)} \mathrm{Regret}_T(\pi) \geq \frac{1}{2} T^{2/3} \exp(-C')$. This completes the proof for $\mathcal{D}(\tilde{C},L)$.
  
  The lower bound for $\mathcal{D}^{(k)}(\tilde{C},L)$ is relatively straightforward. Assume again that the noise distribution is centered Gaussian with an appropriate variance. Further, assume that $F_0$, $F_1$ are piecewise constant on the intervals $[(i-1)/k, i/k]$ for $i \in \{1, \cdots, k\}$. As the intervals of constant value are known, this corresponds directly to a contextual bandit problem with $k$-discrete arms. One can directly adapt existing lower bound arguments (see \cite[Chapter 2]{slivkins2019introduction}) to see that each arm incurs a $\Theta(\sqrt{T/k})$ regret, and the total regret must be at least $c \sqrt{kT}$ for some constant $c>0$ (independent of $T$).   
   \end{proof}

\subsection{Confidence band results}

\subsubsection{Proof of Theorem~\ref{thm:valid}} 
The proof will proceed via two intermediate lemmas. 
\begin{lemma}\label{lem:determ}
Let $y = \theta^* + \varepsilon$ and $\hat{\theta}$ denote the isotonic $\tau$ quantile regression fit defined by~\eqref{eq:estimator}. Then the following pointwise inequality holds deterministically for all $i \in [n]$,
\begin{equation*}
\theta_i^* + \tau(\varepsilon_{i: \hat{u}_i}) \leq \hat{\theta}_i \leq \theta^*_i + \tau(\varepsilon_{\hat{l}_i:i}).
\end{equation*}
\end{lemma}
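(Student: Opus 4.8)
\textbf{Proof proposal for Lemma~\ref{lem:determ}.}

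The plan is to exploit the characterization of the isotonic quantile estimator as a blockwise quantile, analogous to the min-max formula for the isotonic LSE. First I would recall (or establish from the KKT/subgradient conditions of the convex program~\eqref{eq:estimator}) that on each maximal constant block $[\hat{l}_i, \hat{u}_i]$ of $\hat\theta$, the common value $\hat\theta_i$ is a $\tau$-quantile of the data $\{y_{\hat{l}_i}, \dots, y_{\hat{u}_i\}$ restricted to that block. More precisely, the standard ``pool adjacent violators'' structure gives the representation $\hat\theta_i = \max_{a \le i} \min_{b \ge i} \tau(y_{a:b})$, where $\tau(y_{a:b})$ denotes a $\tau$-quantile of the empirical distribution of $\{y_a, \dots, y_b\}$. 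I would either cite this representation from the isotonic regression literature (it is the exact analogue of the Brunk/Ayer min-max formula, with the mean replaced by the $\tau$-quantile, and follows from the fact that $\rho_\tau$ is convex and the feasible set is a polyhedral cone defined by monotonicity) or derive it quickly from optimality conditions.

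Next I would use two consequences of this min-max formula. On the one hand, taking $a = \hat{l}_i$ in the outer max and using that the inner min over $b \ge i$ is achieved (and at least as large as the value at $b = i$ is not what we want — rather we want $b = \hat u_i$): since $\hat\theta$ is constant on $[\hat l_i, \hat u_i]$, one shows $\hat\theta_i \le \tau(y_{\hat l_i : i})$ by picking $a = \hat l_i$ and noting the inner minimum is $\le \tau(y_{\hat l_i : i})$; symmetrically $\hat\theta_i \ge \tau(y_{i : \hat u_i})$ by picking $b = \hat u_i$ in the inner min and noting the outer maximum over $a \le i$ is $\ge \tau(y_{i:\hat u_i})$. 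The precise bookkeeping here — checking that the block endpoints are the right choices of $a, b$ and that the quantile of a sub-block dominates or is dominated appropriately — is the one genuinely fiddly point; the key monotonicity fact is that $\tau(z_{1:m})$ is monotone under replacing the multiset by a pointwise-larger one, and that restricting to the constant block does not change the quantile value.

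Finally I would translate from $y$ to $\varepsilon$: since $y_j = \theta^*_j + \varepsilon_j$ and $\theta^*$ is non-decreasing, for $j \in [\hat l_i, i]$ we have $\theta^*_j \le \theta^*_i$, hence $y_j \le \theta^*_i + \varepsilon_j$ pointwise, so $\tau(y_{\hat l_i : i}) \le \theta^*_i + \tau(\varepsilon_{\hat l_i : i})$ by the monotonicity of empirical quantiles under pointwise domination; combined with $\hat\theta_i \le \tau(y_{\hat l_i : i})$ this gives the right-hand inequality. Symmetrically, for $j \in [i, \hat u_i]$ we have $\theta^*_j \ge \theta^*_i$, so $y_j \ge \theta^*_i + \varepsilon_j$, giving $\tau(y_{i:\hat u_i}) \ge \theta^*_i + \tau(\varepsilon_{i:\hat u_i})$ and hence the left-hand inequality. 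I expect the main obstacle to be making the min-max representation and the ``quantile of a constant block'' claim fully rigorous for the possibly non-unique quantile and non-unique minimizer $\hat\theta$ — one must be careful that the deterministic inequalities hold for \emph{any} choice of minimizer and \emph{any} choice of $\tau$-quantile of the relevant sub-samples, which is why the statement is phrased with the quantile operator $\tau(\cdot)$ rather than a fixed value.
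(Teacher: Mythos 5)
Your final translation step (using monotonicity of $\theta^*$ to pass from quantiles of $y$ to quantiles of $\varepsilon$) matches the paper, and you correctly identify that everything hinges on the within-block inequalities $\tau(y_{i:\hat u_i}) \le \hat\theta_i \le \tau(y_{\hat l_i : i})$. The gap is in how you propose to obtain them. First, the min--max representation $\hat\theta_i = \max_{a\le i}\min_{b\ge i}\tau(y_{a:b})$ for the isotonic quantile fit is itself nontrivial in this setting (non-unique quantiles, non-unique minimizers, and the box constraint $0\le\theta\le 1$ in $\mathcal{M}$), and you only gesture at it. Second, and more seriously, even granting that representation, your index-picking runs in the wrong direction: choosing $a=\hat l_i$ in the outer maximum yields $\hat\theta_i \ge \min_{b\ge i}\tau(y_{\hat l_i:b})$, a \emph{lower} bound, so observing that this inner minimum is $\le \tau(y_{\hat l_i:i})$ gives no upper control on $\hat\theta_i$; symmetrically, choosing $b=\hat u_i$ in the inner minimum only gives $\hat\theta_i\le\max_{a\le i}\tau(y_{a:\hat u_i})$, not $\hat\theta_i\ge\tau(y_{i:\hat u_i})$. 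To extract the sub-block inequalities you would need, e.g., that $\min_{b\ge i}\tau(y_{i:b})\ge \tau(y_{i:\hat u_i})$, which is not a consequence of the min--max identity alone; even for the isotonic LSE the analogous inequalities $\bar y_{i:\hat u_i}\le\hat\theta_i\le\bar y_{\hat l_i:i}$ are proved from the block (greatest convex minorant) structure or a variational argument, not by plugging block endpoints into the min--max formula. Likewise, knowing that the block value is a $\tau$-quantile of the whole block's data does not by itself yield the sub-block statements.

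The paper's proof sidesteps all of this with exactly the localized optimality argument you allude to at the start but do not carry out: since $[\hat l_i,\hat u_i]$ is a maximal constant block, the vector $\tilde\theta$ obtained by adding a small $\epsilon>0$ to $\hat\theta$ on the sub-block $[i,\hat u_i]$ (respectively subtracting $\epsilon$ on $[\hat l_i,i]$) is still feasible; comparing objectives, dividing by $\epsilon$ and letting $\epsilon\to 0$ gives $(1-\tau)\,\#\{k\in[i,\hat u_i]: y_k\le\hat\theta_i\}\ \ge\ \tau\,\#\{k\in[i,\hat u_i]: y_k>\hat\theta_i\}$, i.e.\ the empirical CDF of $y_{i:\hat u_i}$ at $\hat\theta_i$ is at least $\tau$, hence $\tau(y_{i:\hat u_i})\le\hat\theta_i$, and symmetrically $\hat\theta_i\le\tau(y_{\hat l_i:i})$; your translation to $\varepsilon$ then finishes the proof. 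If you want to salvage your route, apply your KKT/subgradient idea directly to these one-sided sub-block perturbations rather than to a global min--max representation; as written, the middle step of your argument does not go through.
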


\begin{proof}
	For $\epsilon>0$ sufficiently small, consider an alternative estimator $\tilde{\theta}_j = \hat{\theta}_j + \epsilon$ for all $ j \in [i, \hat{u}_i]$, and $\tilde{\theta}_j = \hat{\theta}_j$ otherwise. As $\tilde{\theta} \in \mathcal{M}$, by the optimality of $\hat{\theta}$,  
	\begin{align}
	\sum_{k=1}^{n} \rho_{\tau}(y_k - \hat{\theta}_k) \leq \sum_{k=1}^{n} \rho_{\tau}(y_k - \tilde{\theta}_k) 
	\implies  0 \leq \sum_{k=i}^{\hat{u}_i} \rho_{\tau}(y_k - \hat{\theta}_k - \epsilon) -  \sum_{k=i}^{\hat{u}_i} \rho_{\tau}(y_k - \hat{\theta}_k).   \nonumber 
	\end{align} 
	
	\noindent 
	Next, we observe that 
	\begin{equation*}
	\rho_{\tau}(x - \epsilon) - 	\rho_{\tau}(x) = 
	\begin{cases}
	- \tau \epsilon, & \text{if} \:\: x - \epsilon > 0 \\
	(1 - \tau) \epsilon , & \text{if} \:\:x \leq 0 \\
	\end{cases}
	\end{equation*}
	
	\noindent 
	Using the two displays above, after dividing by $\epsilon > 0$ and setting $\epsilon\to 0$, we obtain
	\begin{align*}
	&(1 - \tau) \sum_{k=i}^{\hat{u}_i} \mathbb{1}(y_k - \hat{\theta}_k \leq 0) \geq \tau \sum_{k=i}^{\hat{u}_i} \mathbb{1}(y_k - \hat{\theta}_k > 0) 
	\implies  \tau \leq \frac{\sum_{k=i}^{\hat{u}_i} \mathbb{1}(y_k - \hat{\theta}_k \leq 0)}{\hat{u}_i - i + 1}
	\implies  \tau((y_k - \hat{\theta}_i)_{i : \hat{u}_i}) \leq 0
	\end{align*}
	where the last implication uses the fact that $\hat{\theta}_k = \hat{\theta}_i$ for $i \leq k \leq \hat{u}_i.$
	Thus we obtain, 
	\begin{align}
	\tau(y_{i:\hat{u}_i}) \leq \hat{\theta}_i \implies \theta_i^* + \tau(\varepsilon_{i: \hat{u}_i}) \leq \hat{\theta}_i.  \nonumber 
	\end{align}
	Similarly, one can obtain $\hat{\theta}_i \leq \theta^*_i + \tau(\varepsilon_{\hat{l}_i:i})$. This completes the proof. 
\end{proof}

\begin{lemma}\label{lem:hoeffding}
Suppose $\varepsilon_1,\dots,\varepsilon_n$ are i.i.d random variables with cdf $F$ satisfying $F(0) = \tau.$ In addition, suppose $F$  satisfies Assumption (A) as in~\eqref{eq:assum}. Also suppose $\Gamma_1$,$\Gamma_2$ are constants chosen to satisfy~\eqref{eq:cond1} and~\eqref{eq:cond2}. Then the following is true:
\begin{equation*}
\mathbb{P}\left(\max_{1 \leq k,l \leq n, l - k + 1 \geq \Gamma_2 \log n} \tau(\varepsilon_{k:l}) \sqrt{l - k + 1} \leq \Gamma_1 \sqrt{\log n}\right) \geq 1 - \alpha.
\end{equation*}
\end{lemma}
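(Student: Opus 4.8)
The plan is to control the quantity $\tau(\varepsilon_{k:l})$ for a \emph{fixed} pair $(k,l)$ with $m := l-k+1 \geq \Gamma_2 \log n$, via a tail bound, and then take a union bound over the $O(n^2)$ choices of $(k,l)$. Fix such a pair. By the definition of the empirical $\tau$-quantile, the event $\{\tau(\varepsilon_{k:l}) > t\}$ is implied by the event that strictly fewer than $\lceil \tau m\rceil$ of the variables $\varepsilon_k,\dots,\varepsilon_l$ are $\le t$; equivalently, $\frac{1}{m}\sum_{j=k}^{l}\mathbb{1}(\varepsilon_j \le t) < \tau$ (up to the usual care with ties, which only helps). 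Writing $p(t) := F(t) = \mathbb{P}(\varepsilon_j \le t)$, and noting $p(0)=\tau$, Assumption~A gives $p(t) - \tau = F(t)-F(0) > \tilde C t$ for $t \in [0,L]$. So the event $\{\tau(\varepsilon_{k:l}) > t\}$ forces the empirical mean of the i.i.d.\ Bernoulli($p(t)$) variables $\mathbb{1}(\varepsilon_j\le t)$ to fall below its expectation by at least $p(t)-\tau > \tilde C t$. Hoeffding's inequality then yields
\begin{equation*}
\mathbb{P}\big(\tau(\varepsilon_{k:l}) > t\big) \leq \exp\big(-2 m (\tilde C t)^2\big), \qquad t \in [0,L].
\end{equation*}
A symmetric argument (looking at $\mathbb{1}(\varepsilon_j \le -t)$ and using $F(0)-F(-t) > \tilde C t$) controls $\mathbb{P}(\tau(\varepsilon_{k:l}) < -t)$ by the same bound, so we may just as well work with $|\tau(\varepsilon_{k:l})|$ if desired, though for the statement only the upper tail is needed.

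Next I would plug in the scale at which the maximum is taken. We want, for every valid $(k,l)$, that $\tau(\varepsilon_{k:l})\sqrt{m} \le \Gamma_1\sqrt{\log n}$, i.e.\ $\tau(\varepsilon_{k:l}) \le t_m := \Gamma_1\sqrt{\log n}/\sqrt{m}$. Two checks are required. First, $t_m$ must lie in $[0,L]$ so that Assumption~A applies: since $m \ge \Gamma_2\log n$, we have $t_m \le \Gamma_1\sqrt{\log n}/\sqrt{\Gamma_2 \log n} = \Gamma_1/\sqrt{\Gamma_2} \le L$ by condition~\eqref{eq:cond2}. Second, the per-pair failure probability is
\begin{equation*}
\mathbb{P}\big(\tau(\varepsilon_{k:l}) > t_m\big) \le \exp\big(-2m\tilde C^2 t_m^2\big) = \exp\big(-2\tilde C^2 \Gamma_1^2 \log n\big) = n^{-2\tilde C^2\Gamma_1^2}.
\end{equation*}
Union bounding over the at most $n^2$ pairs $(k,l)$ gives a total failure probability at most $n^{2 - 2\tilde C^2\Gamma_1^2}$, and I would argue this is $\le \alpha$ using condition~\eqref{eq:cond1}. (Indeed \eqref{eq:cond1} reads $2\log 3\,(\tilde C^2\Gamma_1^2 - 1) \ge \log(1/\alpha)$; since the relevant range has $n \ge 3$, one has $n^{2-2\tilde C^2\Gamma_1^2} = n^{-2(\tilde C^2\Gamma_1^2-1)} \le 3^{-2(\tilde C^2\Gamma_1^2-1)} = \exp(-2\log 3\,(\tilde C^2\Gamma_1^2-1)) \le \alpha$. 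For small $n$ one checks the bound is vacuous or handled separately.)

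The main obstacle I anticipate is not conceptual but bookkeeping around the empirical-quantile definition with ties and the precise counting: one must be careful that $\{\tau(z_{1:m}) > t\}$ really does imply $\#\{j: z_j \le t\} < \tau m$ (or $\le$, depending on the convention in the paper's definition $\mathbb{P}[X<q]\le\tau\le\mathbb{P}[X\le q]$), since getting the inequality direction wrong would break the Hoeffding step. A secondary subtlety is the exact constant in the union bound: whether to count ordered pairs, unordered pairs, or pairs with the length constraint, and whether the $n \ge 3$ (equivalently $\log 3$) threshold in \eqref{eq:cond1} is meant to absorb the $n^2$ factor exactly as above or with a slightly different constant; I would reconcile these by stating the per-pair bound cleanly and then noting $\#\{(k,l)\} \le n^2$ suffices. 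Everything else — Hoeffding, monotonicity of $F$, the algebra with $t_m$ — is routine.
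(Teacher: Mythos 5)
Your proposal is correct and follows essentially the same route as the paper's proof: rewrite the per-pair event $\{\tau(\varepsilon_{k:l})>t\}$ as a deviation of Bernoulli indicators, apply Hoeffding, choose $t=\Gamma_1\sqrt{\log n}/\sqrt{l-k+1}$ (which lies in $[0,L]$ by \eqref{eq:cond2} since $l-k+1\geq\Gamma_2\log n$), obtain the per-pair bound $n^{-2\tilde C^2\Gamma_1^2}$, and union bound over at most $n^2$ pairs with \eqref{eq:cond1} absorbing the $n^2$ factor exactly as you describe. The tie/convention and counting subtleties you flag are handled the same way in the paper and do not affect the argument.
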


\begin{proof}
	Fix any pair of integers $1 \leq k,l \leq n$ such that $l - k + 1 \geq \Gamma_2 \log n.$ Now for any $t > 0$, by Hoeffding's inequality \cite{hoeffding1994probability} we have 
\begin{align}
\mathbb{P}[\tau(\varepsilon_{k:l}) > t] &= \mathbb{P}\Big[ \sum_{j=k}^{l} \mathbb{1}(\varepsilon_j > t) > ( l -k +1) (1-\tau) \Big]  \nonumber \\
&= \mathbb{P}\Big[ \sum_{j=k}^{l} (\mathbb{1}(\varepsilon_j > t)  - (1-F(t)) >  (l-k+1) (F(t) - F(0))    \Big] \nonumber \\
& \leq \exp\left(-2 (l - k + 1) (F(t) - F(0))^2\right) \nonumber 
\end{align}

Now set $t = \frac{\Gamma_1 \sqrt{\log n}}{\sqrt{l - k + 1}}$ and note that by~\eqref{eq:cond2} and the fact that $l - k + 1 \geq \Gamma_2 \log n$ this choice of $t$ lies between $0$ and $L.$  Therefore, we can further conclude that 
\begin{align*}
\mathbb{P}[\tau(\varepsilon_{k:l}) > t] \leq \exp\left(-2 \tilde{C}^2 \Gamma_1^2 \log n\right) = (\frac{1}{n})^{2 \tilde{C}^2 \Gamma_1^2} \leq \frac{\alpha}{n^2}.
\end{align*}
where the last inequality follows due to~\eqref{eq:cond1}. Since there are at most $n^2$ pairs of $(k,l)$ to consider, a union bound now finishes the proof of this lemma. 
\end{proof}

\begin{proof}[Proof of Theorem~\ref{thm:valid}]
Armed with the two lemmas above, we can now finish the proof of Theorem~\ref{thm:valid}. Recall the set $\hat{G}$ from the construction of the confidence band. Fix any $i \in [n].$ There are two cases to consider. 

\noindent 
\textbf{CASE $1$}: Suppose $i \in \hat{G}.$ By Lemma~\ref{lem:coverage} we have 
$$\hat{\theta}_i -  \tau(\varepsilon_{\hat{l}_i:i}) \leq \theta^*_i \leq \hat{\theta}_i  + \tau(\varepsilon_{i: \hat{u}_i}).$$ Now because $i \in \hat{G}$ we have $\min\{\hat{u}_i - i + 1, i - \hat{l}_i + 1\} \geq \Gamma_2 \log n.$ This allows us to use Lemma~\ref{lem:hoeffding} to conclude 
$$\hat{\theta}_i - \frac{\Gamma_1 \sqrt{\log n}}{\sqrt{i - \hat{l}_i + 1,}} \leq \hat{\theta}_i -  \tau(\varepsilon_{\hat{l}_i:i}) \leq \theta^*_i \leq \hat{\theta}_i  + \tau(\varepsilon_{i: \hat{u}_i}) \leq \hat{\theta}_i + \frac{\Gamma_1 \sqrt{\log n}}{\sqrt{\hat{u}_i - i + 1,}}.$$
Therefore, we have now established the desired coverage statement on $\hat{G}.$

\noindent 
\textbf{CASE $2$}: Suppose $i \in \hat{G}^{c}.$ If the set $\{j \in \hat{G}, j > i\}$ is empty then $\hat{\theta}^{(u)}_i = 1$ and trivially we have $\theta^*_i \leq \hat{\theta}^{(u)}_i.$ If the set  $\{j \in \hat{G}, j > i\}$ is non empty then $\hat{k}_{u}(i) \in \hat{G}$ is well defined and we have $\theta^*_i \leq \theta^*_{\hat{k}_{u}(i)} \leq \hat{\theta}^{(u)}_{\hat{k}_{u}(i)} = \hat{\theta}^{(u)}_{i}.$ A similar argument would also show that $\hat{\theta}^{(l)}_{i} \leq \theta^*_i.$ This establishes the desired coverage statement on $\hat{G}^{c},$ and completes the proof.


\end{proof}

\subsubsection{Proof of Theorem \ref{thm:width}}

The proof of Theorem \ref{thm:width} heavily rests on the following proposition which bounds the number of pieces of the isotonic quantile estimator $\hat{\theta}$ defined in~\eqref{eq:estimator}. 
\begin{proposition}\label{prop:pieces}
	Let $k$ denote the number of constant pieces of $\theta^*$ and $\hat{k}$ denote the number of constant pieces of $\hat{\theta}.$ Under the same assumptions as in Theorem~\ref{thm:width} we have the bound
	\begin{equation*}
	\E \hat{k} \leq C \min\{n^{1/3}, k \log n\}.
	\end{equation*}
\end{proposition}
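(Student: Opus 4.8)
The plan is to prove the two bounds $\E\hat k \lesssim n^{1/3}$ and $\E\hat k \lesssim k\log n$ separately and then take the minimum. For both, the strategy is to adapt the argument of Meyer--Woodroofe (\cite{MW00}) from the Gaussian least-squares setting to the quantile setting. The key reduction is the following: a ``new'' jump (change of constant value) in $\hat\theta$ between locations $j$ and $j+1$ forces, by the characterization of the isotonic quantile fit (cf.\ the variational inequalities derived in Lemma~\ref{lem:determ}), that certain local empirical $\tau$-quantiles of the data must be suitably ordered. Concretely, on any maximal block where $\theta^*$ is constant, each constant piece of $\hat\theta$ that lies strictly inside that block must have its ``level'' pinned by local empirical $\tau$-quantiles of the errors $\varepsilon$; chaining these constraints across the block, the number of such interior pieces is controlled by the number of sign changes / record-type events of a centered random walk $W_m = \sum_{i\le m}(\tau - \mathbb{1}(\varepsilon_i \le 0))$-type process with mean-zero, bounded $\pm$-type increments (here $\mathbb{1}(\varepsilon_i\le 0)-\tau$ has mean $0$ since $F(0)=\tau$, and is bounded in $[-1,1]$). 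So I would first establish a \emph{deterministic} combinatorial lemma: the number of constant pieces of $\hat\theta$ inside a block on which $\theta^*$ is constant is at most (a constant times) the number of times a certain random walk built from the errors on that block ``returns near its running extremum,'' or more precisely is bounded by a sum of indicators of events of the form $\{\tau(\varepsilon_{a:b}) \ge \tau(\varepsilon_{b+1:c})\}$ over a nested family of index intervals.

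Given that reduction, the bound $\E\hat k \lesssim k\log n$ is the easier of the two: there are $k$ blocks of $\theta^*$, and on each block I would bound the expected number of interior pieces of $\hat\theta$ by $O(\log n)$ using a union bound over dyadic scales together with Hoeffding's inequality (exactly in the spirit of Lemma~\ref{lem:hoeffding}) — at scale $2^s$ there are $O(n/2^s)$ candidate intervals and each contributes a jump with probability that, after summing a geometric-type tail, gives $O(\log n)$ total in expectation. Adding across the $k$ blocks and adding the (at most $k$) boundary pieces yields $\E\hat k \le C k\log n$.

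For the scale-free bound $\E\hat k \lesssim n^{1/3}$, the idea is again from \cite{MW00}: here one does \emph{not} use the block structure of $\theta^*$ at all, but instead exploits the monotonicity of $\hat\theta$ globally. Because $\hat\theta$ is $[0,1]$-valued and nondecreasing, if it has $\hat k$ pieces then the multiset of jump sizes sums to at most $1$; combining this with the fact that each jump of size $\delta$ requires, via the local-quantile characterization, a fluctuation of the error random walk of order $\delta\sqrt{(\text{block length})}$ over the relevant interval, one gets a constraint of the form $\sum_{\text{pieces}} \delta_j \sqrt{m_j} \lesssim \max(\text{random walk fluctuations})$ with $\sum m_j = n$, $\sum \delta_j \le 1$. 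Optimizing (the worst case is $\hat k$ equal pieces of length $n/\hat k$ and equal jumps $1/\hat k$), one finds $\hat k \cdot (1/\hat k)\sqrt{n/\hat k} = \sqrt{n/\hat k}$ must be at most the typical size of the relevant error-process fluctuation, which by Hoeffding / a maximal inequality over the $O(n^2)$ index pairs is $O(\sqrt{\log n})$; this forces $\hat k \lesssim n^{1/3}$ up to logs, and a more careful second-moment / Dvoretzky--Kiefer--Wolfowitz-type argument (or a direct expectation bound rather than a high-probability one) removes the log to give the clean $C n^{1/3}$. I would carry this out by writing $\E\hat k = \sum_j \P(\text{a new piece starts at } j)$ and bounding each probability by that of a random-walk fluctuation event of the appropriate order, then summing.

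The main obstacle I anticipate is making the reduction from ``constant pieces of $\hat\theta$'' to ``fluctuation events of an error random walk'' fully rigorous in the quantile setting: unlike the Gaussian LSE, the isotonic quantile estimator's pooled value on a block is an empirical $\tau$-quantile rather than an average, so the clean linear-algebra identities of \cite{MW00} must be replaced by order-statistic inequalities (of the type in Lemma~\ref{lem:determ}), and one must be careful that the events driving distinct jumps can be organized into a form where Hoeffding's inequality and a union bound over $O(n^2)$ (or $O(n\log n)$ dyadic) index pairs actually close the argument. Handling the non-uniqueness of $\hat\theta$ and the interaction of the two bounds (so that the final answer is genuinely the minimum, not just each bound in isolation) is a secondary technical point, but should follow once the random-walk reduction is in place.
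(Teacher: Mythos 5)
Your high-level plan (write $\E\hat k=\sum_i\P(\text{jump at }i)$, reduce each jump event to an event about random walks built from the errors, and prove the $n^{1/3}$ and $k\log n$ bounds by two different choices in that reduction) is indeed the skeleton of the paper's proof, which proceeds via Lemmas~\ref{lem:pieces_control} and~\ref{lemma:prob_pieces} to the inclusion $\{\hat\theta_{i-1}<\hat\theta_i\}\subseteq\{\max_{l_i\le l\le i-1}\tau(\varepsilon_{l:i-1})<\min_{i\le u\le u_i}\tau(\varepsilon_{i:u})+\theta^*_{u_i}-\theta^*_{l_i}\}$. However, the probabilistic core of your argument has a genuine gap: you propose to control the jump probabilities with Hoeffding's inequality (dyadic union bound ``in the spirit of Lemma~\ref{lem:hoeffding}''). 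Inside a block where $\theta^*$ is constant, a jump of $\hat\theta$ forces \emph{no} deviation of any empirical quantile from the truth --- the relevant event is only that all local left quantiles of the errors sit below all local right quantiles, i.e.\ a persistence event for a centered $\pm$-increment random walk. Its probability is polynomially small in the distances to the block boundary (of order $1/(u_i-i+1)+1/(i-l_i+1)$), not exponentially small in anything, and Hoeffding applied at deviation $t=0$ gives the trivial bound $1$. This is exactly why the paper needs Lemma~\ref{lem:woodroofe_meyer_argument}, whose proof rests on fluctuation theory (Feller's $\P[\min_{i\le m}S_i\ge 0]\asymp m^{-1/2}$, Lemma~\ref{lem:RW1}) and a change-of-measure/stopping-time argument, not on concentration. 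Without such an estimate your claimed $O(\log n)$ jumps per constant block, and hence the $k\log n$ bound, does not follow from the tools you invoke.

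The $n^{1/3}$ bound has a second missing ingredient. Your budget heuristic (``total jump size $\le 1$, each jump of size $\delta$ costs a fluctuation $\delta\sqrt{m}$, maximal inequality over $O(n^2)$ pairs gives $\sqrt{\log n}$'') at best yields $n^{1/3}$ up to logarithmic factors, and the suggestion that a ``second-moment / DKW-type argument'' removes the log is not an argument. In the paper the clean rate comes from the refined two-sided bound $\P[\max_i\tau(X_{1:i})+\max_j(1-\tau)(-Y_{1:j})\le z]\le C(1/m+1/n+z^2)$, whose quadratic-in-$z$ term is supplied by the small-gap estimate $\P[0<\max_i\tau(X_{1:i})\le z]\lesssim z$ (Lemma~\ref{lem:RW2}), again proved by a martingale change-of-measure rather than concentration; this gives $\P[\hat\theta_{i-1}<\hat\theta_i]\lesssim\min_{l_i,u_i}\bigl(\tfrac{1}{u_i-i+1}+\tfrac{1}{i-l_i+1}+(\theta^*_{u_i}-\theta^*_{l_i})^2\bigr)$, and the summation to $Cn^{1/3}$ then uses the total-variation budget of $\theta^*$ (not of $\hat\theta$, as in your sketch) through a nontrivial dyadic covering/counting argument. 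So while your reduction step and your instinct to adapt \cite{MW00} are correct, the two quantitative inputs that actually drive the proposition --- the random-walk persistence probability and the anti-concentration-near-zero estimate for running maxima of local error quantiles --- are absent from your proposal, and Hoeffding cannot substitute for either.
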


Using the above proposition, we now present the proof of Theorem~\ref{thm:width}.

\begin{proof}[Proof of Theorem~\ref{thm:width}]
	We will only show how to bound $\mathbb{E} \sum_{i = 1}^{n} (\hat{\theta}^{(u)}_j - \hat{\theta}_j)$ as the other term $\mathbb{E}  \sum_{i = 1}^{n} (\hat{\theta}_j - \hat{\theta}^{(l)}_j)$ can be controlled similarly.

	Recall the set $\hat{G}$ from the construction of the confidence band. 
	We decompose  $$\sum_{i = 1}^{n} (\hat{\theta}^{(u)}_j - \hat{\theta}_j) = \underbrace{\sum_{i \in \hat{G}} (\hat{\theta}^{(u)}_j - \hat{\theta}_j)}_{T_1} + \underbrace{\sum_{i \in \hat{G}^{c}} (\hat{\theta}^{(u)}_j - \hat{\theta}_j)}_{T_2}.$$
	We will first bound $T_1.$ Note that when $j \in \hat{G}$ the difference $\hat{\theta}^{(u)}_j - \hat{\theta}_j$ is of the form $\Gamma_1 \sqrt{\log n} \cdot \frac{1}{\sqrt{l}}$ where $l$ is an integer between $1$ and $n_i$ and $n_i$ is the length of the constant block of $\hat{\theta}$ which contains $j.$ Therefore, denoting $H_1(m) = 1 + 1/\sqrt{2} + \dots +  1/\sqrt{m}$ for any integer $m$, we have

	\begin{align*}
	&T_1 \leq \Gamma_1 \sqrt{\log n} \sum_{i = 1}^{\hat{k}} H_1(n_i) \leq \Gamma_1 \sqrt{\log n} \sum_{i = 1}^{\hat{k}} \sqrt{n_i} \leq  \Gamma_1 \sqrt{\log n} \sqrt{n \hat{k}}
	\end{align*}
	where the last step follows using Jensen's inequality. 
	Taking expectation we can write
	\begin{align*}
	&\mathbb{E} T_1 \leq \Gamma_1 \sqrt{n\log n} \:\mathbb{E} \sqrt{\hat{k}} \leq \Gamma_1 \sqrt{n\log n} \:\sqrt{\mathbb{E} \hat{k}} \leq \Gamma_1 \sqrt{n\log n} \sqrt{\min\{n^{1/3}, k \log n\}} \leq \\& \Gamma_1 \min\{n^{2/3} \sqrt{\log n}, n^{1/2} \sqrt{k} \log n\}.
	\end{align*}
	where the second inequality follows from Jensen's inequality and the third inequality follows from Proposition~\ref{prop:pieces}.

	To bound $T_2$, observe that in each constant piece of $\hat{\theta}$ there are at most $2 \Gamma_2  \log n$ elements in $\hat{G}^{c}$ and therefore $|\hat{G}^c| \leq 2 \Gamma_2 \hat{k}\: \log n$ deterministically. Moreover, since $\hat{\theta}^{(u)} - \theta^*$ take values between $0$ and $1$ we can write
	\begin{align*}
	\E T_2 \leq \E \sum_{i \in \hat{G}^{c}} (\hat{\theta}^{(u)}_j - \hat{\theta}_j) \leq \E |\hat{G}^c| \leq \Gamma_2 \log n \:\:\E \hat{k} \leq \Gamma_2 \log n \min\{n^{1/3}, k \log n\}.
	\end{align*} 
	This finishes the proof of the theorem. 
\end{proof}

\noindent 
It remains to prove Proposition~\ref{prop:pieces}. This proof needs three ingredient lemmas. First observe that 
\begin{align}
\hat{k} = 1 + \sum_{i=2}^{n} \mathbf{1}(\hat{\theta}_{i-1} < \hat{\theta}_i) 
\implies  \mathbb{E}[\hat{k}] = 1 + \sum_{i=2}^{n} \mathbb{P}[\hat{\theta}_{i-1} < \hat{\theta}_i]. \nonumber 
\end{align} 
Thus in order to bound $\E \hat{k}$ it suffices to control $\mathbb{P}[\hat{\theta}_{i-1} < \hat{\theta}_i]$ for $2\leq i \leq n$. The following two lemmas ultimately identify an event which contains the event $\{ \hat{\theta}_{i-1} < \hat{\theta}_i\}$ and which can be explicitly written in terms of the error random variables $\varepsilon.$

\begin{lemma}
	\label{lem:pieces_control} 
	For $2\leq i \leq n$, 
	\begin{align}
	\{ \hat{\theta}_{i-1} < \hat{\theta}_i\} \subseteq \Big\{ \max_{l \leq i-1} \tau(y_{l:i-1}) < \min_{u\geq i} \tau(y_{i:u}) \Big\}. \nonumber 
	\end{align}
\end{lemma}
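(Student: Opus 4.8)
The plan is to deduce the inclusion from two deterministic one-sided bounds on the fitted value $\hat\theta$, each proved by a feasible-direction perturbation of exactly the type used in the proof of Lemma~\ref{lem:determ}. Restrict throughout to the event $\{\hat\theta_{i-1}<\hat\theta_i\}$, on which $i-1$ is the right end-point and $i$ the left end-point of their respective constant blocks of $\hat\theta$. The first step is the reduction: it suffices to show
\begin{equation*}
\text{(a)}\quad \tau(y_{l:i-1})\le\hat\theta_{i-1}\quad\text{for all }l\le i-1,\qquad\text{(b)}\quad \tau(y_{i:u})\ge\hat\theta_i\quad\text{for all }u\ge i,
\end{equation*}
where in (a) the empirical $\tau$-quantile is read as the smallest $\tau$-quantile and in (b) as the largest (the tie-breaking convention is inconsequential for how the lemma is used later). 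Granting (a) and (b), the hypothesis $\hat\theta_{i-1}<\hat\theta_i$ gives $\max_{l\le i-1}\tau(y_{l:i-1})\le\hat\theta_{i-1}<\hat\theta_i\le\min_{u\ge i}\tau(y_{i:u})$, which is the asserted containment.

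For (a), fix $l\le i-1$ and, for small $\epsilon>0$, take the competitor $\tilde\theta$ with $\tilde\theta_k=\hat\theta_k+\epsilon$ for $k\in[l,i-1]$ and $\tilde\theta_k=\hat\theta_k$ otherwise. Raising a contiguous segment cannot break monotonicity at its left edge, and at the right edge both monotonicity ($\hat\theta_{i-1}+\epsilon\le\hat\theta_i$) and the upper bound ($\hat\theta_{i-1}+\epsilon\le1$) hold for small $\epsilon$ precisely because $\hat\theta_{i-1}<\hat\theta_i\le1$; hence $\tilde\theta\in\mathcal{M}$. Optimality of $\hat\theta$ yields $0\le\sum_{k=l}^{i-1}\big(\rho_\tau(y_k-\hat\theta_k-\epsilon)-\rho_\tau(y_k-\hat\theta_k)\big)$; dividing by $\epsilon$ and letting $\epsilon\downarrow0$, the one-sided derivative computation of $\rho_\tau$ from the proof of Lemma~\ref{lem:determ} turns this into $\#\{k\in[l,i-1]:y_k\le\hat\theta_k\}\ge\tau(i-l)$. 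Since $\hat\theta_k\le\hat\theta_{i-1}$ for every $k\le i-1$, the left-hand count is at most $\#\{k\in[l,i-1]:y_k\le\hat\theta_{i-1}\}$, so this last count is $\ge\tau(i-l)$, which says exactly that $\hat\theta_{i-1}$ is at least the smallest $\tau$-quantile of $y_{l:i-1}$. This establishes (a). Claim (b) is the mirror image: perturb $\hat\theta$ \emph{down} by $\epsilon$ on $[i,u]$ --- feasible since lowering cannot hurt monotonicity at the right edge while at the left edge $\hat\theta_{i-1}\le\hat\theta_i-\epsilon$ and $\hat\theta_i-\epsilon\ge0$ hold for small $\epsilon$ because $0\le\hat\theta_{i-1}<\hat\theta_i$ --- which gives $\#\{k\in[i,u]:y_k<\hat\theta_k\}\le\tau(u-i+1)$ and hence, using $\hat\theta_k\ge\hat\theta_i$, $\#\{k\in[i,u]:y_k<\hat\theta_i\}\le\tau(u-i+1)$, i.e.\ $\hat\theta_i$ is at most the largest $\tau$-quantile of $y_{i:u}$.

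Conceptually (a) and (b) are just the max--min representation of the isotonic quantile fit evaluated at a block end-point, so the whole argument is deterministic and short. The only places I expect to need care are: choosing the perturbation direction so that the subgradient inequality from $\rho_\tau$ points the correct way --- raise on the left segment, lower on the right --- and observing that the hypothesis $\hat\theta_{i-1}<\hat\theta_i$ is exactly what simultaneously keeps the competitor monotone and inside $[0,1]$ across the split $\{i-1,i\}$; and the routine bookkeeping with non-unique empirical quantiles, dispatched once and for all by the smallest/largest convention fixed above. I do not anticipate a genuine obstacle beyond getting these signs and conventions straight.
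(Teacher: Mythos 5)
Your proposal is correct and follows essentially the same route as the paper: the same reduction to the two deterministic bounds $\tau(y_{l:i-1})\le\hat\theta_{i-1}$ and $\hat\theta_i\le\tau(y_{i:u})$, proved by the same $\epsilon$-perturbation of $\hat\theta$ on $[l,i-1]$ (up) and $[i,u]$ (down), with feasibility supplied by the hypothesis $\hat\theta_{i-1}<\hat\theta_i$ and the count inequality transferred via monotonicity of $\hat\theta$. The only difference is your explicit handling of the smallest/largest quantile convention, which the paper leaves implicit.
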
 

\begin{proof}[Proof of Lemma \ref{lem:pieces_control}] 
	The proof follows once we establish the following inequalities. For any $l \leq i -1$ and $u \geq i$, 
	\begin{align}
	\tau(y_{l:i-1}) \leq \hat{\theta}_{i-1}, \,\,\,\,\, \hat{\theta}_i \leq \tau(y_{i:u}). \nonumber 
	\end{align}
	We establish these inequalities through a perturbative argument. Fix $l \leq i-1$, and construct a new vector $\tilde{\theta} \in \mathcal{M}$ as follows: $\tilde{\theta}_j = \hat{\theta}_j + \epsilon$ for $l \leq j \leq i-1$, while $\tilde{\theta}_j = \hat{\theta}_j$ otherwise. $\tilde{\theta}$ is a small perturbation of $\hat{\theta}$ as $\epsilon$ is taken to be small enough so that $\tilde{\theta} \in \mathcal{M}$. We emphasize that this perturbation is possible because $\hat{\theta}_{i-1} < \hat{\theta}_i$. Using the optimality of $\hat{\theta}$, we obtain that 
	\begin{align}
		&\sum_{j=1}^{n} \rho_{\tau}(y_j - \hat{\theta}_j) \leq \sum_{j=1}^{n} \rho_{\tau}(y_j - \tilde{\theta}_j) \nonumber\\
		&\sum_{j=l}^{i-1} \rho_{\tau}(y_j - \hat{\theta}_j) \leq \sum_{j=l}^{i-1} \rho_{\tau}(y_j - \hat{\theta}_j - \epsilon). \nonumber \\
	    \implies  &\tau((y_j - \hat{\theta}_j)_{l:i-1}) \leq 0 \nonumber 
	\end{align} 
	where the last line follows by arguments similar to the one made in the proof of Lemma~\ref{lem:determ}.
	The monotonicity of $\hat{\theta}$ implies that $\tau((y_j - \hat{\theta}_{i-1})_{l:i-1}) \leq 0$, which, in turn, implies $\tau(y_{l:i-1}) \leq \hat{\theta}_{i-1}$. A similar argument can be made to show that $\tau(y_{i:u}) \geq \hat{\theta}_{i}$. 
\end{proof}

\begin{lemma}
	\label{lemma:prob_pieces} 
	For any $1\leq l_i \leq i-1$ and $i \leq u_i \leq n$, 
	\begin{align}
	\Big\{\max_{l \leq i-1} \tau(y_{l:i-1}) < \min_{u\geq i} \tau(y_{i:u}) \Big\} \subseteq \Big\{\max_{l_i \leq l\leq i -1} \tau(\varepsilon_{l:i-1}) < \min_{i \leq u \leq u_i} \tau(\varepsilon_{i:u}) + \theta_{u_i}^* - \theta_{l_i}^*  \Big\}. \nonumber 
	\end{align} 
\end{lemma}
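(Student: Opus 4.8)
The plan is to prove the inclusion deterministically, using only two elementary properties of the empirical $\tau$-quantile functional $z\mapsto\tau(z_{1:m})$ together with the monotonicity of $\theta^*$: (a) \emph{translation equivariance}, $\tau((z_j+c)_{j\in S})=\tau(z_S)+c$ for any constant $c$; and (b) \emph{monotonicity}, if $z_j\le z_j'$ for every $j$ in a common index set $S$ then $\tau(z_S)\le\tau(z_S')$. The left-hand event is phrased in terms of block quantiles of the observations $y_j=\theta_j^*+\varepsilon_j$, whereas the right-hand event involves block quantiles of the errors $\varepsilon_j$; the whole point is to trade one for the other, paying an additive price governed by the monotone signal $\theta^*$ evaluated at the two reference indices $l_i$ and $u_i$.

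First I would shrink the index ranges. Since $[l_i,i-1]\subseteq\{l\le i-1\}$ and $[i,u_i]\subseteq\{u\ge i\}$, we have $\max_{l_i\le l\le i-1}\tau(y_{l:i-1})\le\max_{l\le i-1}\tau(y_{l:i-1})$ and $\min_{i\le u\le u_i}\tau(y_{i:u})\ge\min_{u\ge i}\tau(y_{i:u})$, so on the left-hand event one still has $\max_{l_i\le l\le i-1}\tau(y_{l:i-1})<\min_{i\le u\le u_i}\tau(y_{i:u})$. Next, for any $l$ with $l_i\le l\le i-1$, every index $j$ in the block $[l,i-1]$ satisfies $j\ge l_i$, hence $\theta_j^*\ge\theta_{l_i}^*$ by monotonicity of $\theta^*$, so $y_j\ge\theta_{l_i}^*+\varepsilon_j$ pointwise on that block; applying (b) and then (a) gives $\tau(y_{l:i-1})\ge\theta_{l_i}^*+\tau(\varepsilon_{l:i-1})$, and taking the maximum over $l\in[l_i,i-1]$ yields $\max_{l_i\le l\le i-1}\tau(y_{l:i-1})\ge\theta_{l_i}^*+\max_{l_i\le l\le i-1}\tau(\varepsilon_{l:i-1})$. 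Symmetrically, for $u$ with $i\le u\le u_i$, every $j\in[i,u]$ has $j\le u_i$, so $\theta_j^*\le\theta_{u_i}^*$, $y_j\le\theta_{u_i}^*+\varepsilon_j$, whence $\tau(y_{i:u})\le\theta_{u_i}^*+\tau(\varepsilon_{i:u})$ and $\min_{i\le u\le u_i}\tau(y_{i:u})\le\theta_{u_i}^*+\min_{i\le u\le u_i}\tau(\varepsilon_{i:u})$.

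Chaining the four inequalities on the left-hand event gives $\theta_{l_i}^*+\max_{l_i\le l\le i-1}\tau(\varepsilon_{l:i-1})<\theta_{u_i}^*+\min_{i\le u\le u_i}\tau(\varepsilon_{i:u})$, and moving the constants to one side produces exactly the right-hand event. The only genuinely delicate point, and the one I would be most careful about, is that the empirical $\tau$-quantile is in general set-valued, so (a) and (b) must be read with a fixed selection convention (for instance the smallest $\tau$-quantile, consistent with the deterministic tie-breaking rule used to define $\hat\theta$); with that convention both properties follow immediately from writing $\tau(z_{1:m})$ as the order statistic $z_{(\lceil\tau m\rceil)}$, together with the fact that order statistics are monotone in each coordinate and translation equivariant. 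Apart from this bookkeeping the argument is a short deterministic chain, entirely parallel to the perturbation computations already used in Lemmas~\ref{lem:determ} and~\ref{lem:pieces_control}, so I do not expect any real obstacle.
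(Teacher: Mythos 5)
Your proposal is correct and follows essentially the same route as the paper's proof: first restrict the maxima/minima to the index ranges $[l_i,i-1]$ and $[i,u_i]$, then use monotonicity of $\theta^*$ to bound $y_j$ pointwise by $\theta^*_{l_i}+\varepsilon_j$ from below and $\theta^*_{u_i}+\varepsilon_j$ from above, and conclude via translation equivariance and monotonicity of the empirical $\tau$-quantile. Your extra care about the set-valuedness of the quantile and a fixed selection convention is a harmless refinement that the paper leaves implicit.
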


\begin{proof}[Proof of Lemma \ref{lemma:prob_pieces}]
	We note that for any $1\leq l_i \leq i-1$ and $i \leq u_i \leq n$, 
	\begin{align}
	\Big\{\max_{l \leq i-1} \tau(y_{l:i-1}) < \min_{u\geq i} \tau(y_{i:u}) \Big\} \subseteq \Big\{\max_{l_i \leq l \leq i-1} \tau(y_{l:i-1}) < \min_{i \leq u \leq u_i } \tau(y_{i:u}) \Big\}. \nonumber 
	\end{align} 
	Using the monotonicity of $\theta^*$, for $l_i \leq j$, $y_j = \theta^*_j + \varepsilon_j \geq \theta^*_{l_i} +\varepsilon_j$, while for any $j \leq u_i$, $y_j = \theta_j^* + \varepsilon_j \leq \theta_{u_i}^* + \varepsilon_j$. This implies 
	\begin{align}
	\Big\{\max_{l_i \leq l \leq i-1} \tau(y_{l:i-1}) < \min_{i \leq u \leq u_i } \tau(y_{i:u}) \Big\} \subseteq \Big\{\max_{l_i \leq l\leq i -1} \tau(\varepsilon_{l:i-1}) < \min_{i \leq u \leq u_i} \tau(\varepsilon_{i:u}) + \theta_{u_i}^* - \theta_{l_i}^*  \Big\}. \nonumber 
	\end{align} 
\end{proof} 

\noindent 
By Lemma~\ref{lem:pieces_control} and Lemma~\ref{lemma:prob_pieces} we have shown the event inclusion relation
\begin{equation*}
\{ \hat{\theta}_{i-1} < \hat{\theta}_i\} \subset \Big\{\max_{l_i \leq l\leq i -1} \tau(\varepsilon_{l:i-1}) + \max_{i \leq u \leq u_i} (1 -\tau)(-\varepsilon_{i:u})< \theta_{u_i}^* - \theta_{l_i}^*  \Big\}.
\end{equation*}

\noindent 
We wish to bound the probability of the event on the R.H.S of the above display. For this purpose we state our next lemma.


\begin{lemma}\label{lem:woodroofe_meyer_argument} 
	Fix $m,n \geq 1$. Let $X_1, \cdots, X_m, Y_1, \cdots, Y_n \sim F$ be iid. Then there exists $C>0$ (independent of $m$ and $n$) such that for all $z>0$, 
	\begin{align}
	\mathbb{P}\Big[ \max_{1\leq i \leq m} \tau(X_{1:i}) + \max_{1\leq j \leq n} (1- \tau)(-Y_{1:j}) \leq z\Big] \leq C \Big( \frac{1}{m} + \frac{1}{n} + z^2\Big). \nonumber 
	\end{align}
\end{lemma}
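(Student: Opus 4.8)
The plan is to reduce the event to a statement about two independent one-sided random walks and then bound each walk's maximum separately. First I would rewrite the two quantile maxima in terms of partial sums. Fix $t>0$ (to be optimized). For the $X$-walk, note that $\tau(X_{1:i}) \le t$ is equivalent to $\sum_{k=1}^{i} \mathbb 1(X_k > t) \ge i(1-\tau)$, i.e.\ to $S_i^X := \sum_{k=1}^i \big(\mathbb 1(X_k > t) - (1-\tau)\big) \ge 0$, a mean-zero (if $t$ is chosen so that $F(t)=\tau$, i.e.\ $t=0$, or mean-negative if $t>0$ since $\mathbb E\,\mathbb 1(X_k>t) = 1 - F(t) < 1-\tau$) $\pm$-type random walk with bounded increments. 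So $\{\max_{i\le m}\tau(X_{1:i}) \le z\}$, for the choice $t=z$, is contained in $\{S_i^X \ge 0 \text{ for all } i \le m\}$ — actually we want the reverse: $\max_i \tau(X_{1:i}) \le z$ forces $\tau(X_{1:i})\le z$ for \emph{every} $i$, in particular $\tau(X_{1:m}) \le z$, so $S_m^X \ge 0$ with the threshold $t=z$. Similarly $\max_{j\le n}(1-\tau)(-Y_{1:j}) \le z$ forces, via the analogous partial-sum reformulation, $S_n^Y \ge 0$ for a companion walk with threshold $z$ on the $Y$'s. By independence of the $X$'s and $Y$'s, $\mathbb P[\cdots \le z] \le \mathbb P[S_m^X \ge 0]\,\mathbb P[S_n^Y \ge 0]$.

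The second step is to show each of $\mathbb P[S_m^X \ge 0]$ and $\mathbb P[S_n^Y \ge 0]$ is $\lesssim \tfrac{1}{\sqrt m} + z$ and $\lesssim \tfrac1{\sqrt n} + z$ respectively; multiplying and using $ab \le a^2+b^2$ (or just expanding the product and absorbing cross terms) then yields the claimed $C(\tfrac1m + \tfrac1n + z^2)$. Here is where Assumption~A enters: $S_m^X$ has i.i.d.\ increments with mean $-(F(z)-F(0)) \le -\tilde C z$ for $z \in [0,L]$ (and for $z>L$ the bound is trivial since the RHS of the lemma is already $\gtrsim 1$), and increment variance bounded by a constant. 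So $S_m^X$ is a random walk with negative drift $\mu := -(F(z)-F(0))$ and $\mathbb E S_m^X = m\mu$, $\mathrm{Var}(S_m^X) \le Cm$. The probability that such a walk is nonnegative at time $m$ is, by Chebyshev/Berry--Esseen type reasoning, at most the probability that a sum exceeds its mean by $|m\mu| = m(F(z)-F(0))$; one gets $\mathbb P[S_m^X \ge 0] \le \min\{1, \tfrac{C}{m(F(z)-F(0))^2}\} \wedge (\text{something} \le \tfrac{C}{\sqrt m})$. Combining the two regimes — the $\tfrac1{\sqrt m}$ bound (valid always, from the local CLT / Berry--Esseen anti-concentration at the mean, since even a mean-zero walk has $\mathbb P[S_m \ge 0]$ bounded away from $1$ only by... wait, it's bounded \emph{above} by roughly $\tfrac12 + \tfrac{C}{\sqrt m}$) — hmm, this needs care.

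Let me restate the intended bound: I expect $\mathbb P[S_m^X \ge 0] \le \tfrac12 + C(\tfrac1{\sqrt m} + z)$ is too weak; rather we need the product of two such quantities to be small, so each must genuinely be close to $0$, which it is \emph{not} when $z$ and $1/m$ are both small — a mean-zero walk is nonnegative at time $m$ with probability $\approx \tfrac12$. The resolution is that we should not just use time $m$: $\max_{i\le m}\tau(X_{1:i}) \le z$ forces $S_i^X \ge 0$ for \emph{all} $i\le m$ simultaneously, and the probability that a near-mean-zero walk stays nonnegative for all $i \le m$ is $\Theta(1/\sqrt m)$ by the classical ballot/persistence estimate, improving to exponentially small once the drift $|\mu|\sqrt m \gtrsim 1$, i.e.\ once $z \gtrsim 1/\sqrt m$; uniformly this gives $\mathbb P \lesssim \tfrac1{\sqrt m} + z$ — no wait, we need to also incorporate the drift for $z \gtrsim 1/\sqrt m$, where persistence probability $\lesssim e^{-cm z^2} \lesssim \tfrac1{mz^2} \cdot (mz^2) e^{-cmz^2} \lesssim$ ... the clean statement is $\mathbb P[S_i^X \ge 0\ \forall i \le m] \le C\min\{1, \tfrac1{\sqrt m} + \sqrt m\,|\mu|\}^{-1}$, and since $|\mu| \ge \tilde C z$ this is $\le C/(\sqrt m(\tfrac1{\sqrt m} + \tilde C z\sqrt m))$... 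I will just record the target inequality $\mathbb P[S_i^X \ge 0\ \forall i\le m] \le C(\tfrac1{\sqrt m} + z)$ and prove it by splitting on $z \lessgtr 1/\sqrt m$: in the first regime use the drift-free persistence bound $C/\sqrt m$, in the second use a Hoeffding bound $\exp(-cmz^2) \le C z$ (valid since $mz^2 \ge 1$ makes $\exp(-cmz^2) \le C/(mz^2)^{1/2} \le Cz$ using $m \ge 1$... actually $\le C/\sqrt{m z^2}\cdot z\sqrt m/\sqrt m$ — I will just use $e^{-cmz^2}\le \tfrac{C}{\sqrt{mz^2}} \le \tfrac{C}{\sqrt m} \cdot \tfrac1z \cdot$ hmm). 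The honest plan: the persistence-of-a-random-walk estimate with drift is standard (see e.g.\ reflection principle for the drift-free case plus Hoeffding for the drifted case), and the main obstacle is assembling these two regimes into the single clean bound $C(\tfrac1{\sqrt m}+z)$ for each walk and then multiplying. The final arithmetic $(\tfrac1{\sqrt m}+z)(\tfrac1{\sqrt n}+z) = \tfrac1{\sqrt{mn}} + \tfrac z{\sqrt m} + \tfrac z{\sqrt n} + z^2 \le \tfrac1m + \tfrac1n + 3z^2$ (by AM-GM on each cross term, e.g.\ $\tfrac z{\sqrt m} \le \tfrac12(\tfrac1m + z^2)$) closes the argument.

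I expect the main obstacle to be \emph{precisely} the persistence estimate $\mathbb P[S_i \ge 0\ \forall i \le m] \le C(\tfrac1{\sqrt m} + |\mu|\sqrt m)$ or its proxy — getting the right dependence on both $m$ and the drift $\mu \asymp z$ uniformly, rather than the easy $\Theta(1/\sqrt m)$ drift-free bound. This is exactly the step that mirrors the Woodroofe--Meyer argument for the Gaussian isotonic LSE referenced in the paper, adapted here to Bernoulli-type increments; I would handle it by a reflection/ballot argument for the centered part together with a one-line Hoeffding tail bound to exploit the drift, then take the minimum of the two.
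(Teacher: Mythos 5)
Your proposal has the right overall shape --- the true content of the lemma is indeed that each maximum separately satisfies $\mathbb{P}[\max_{i\le m}\tau(X_{1:i})\le z]\le C(m^{-1/2}+z)$ (this is what the paper's Lemmas~\ref{lem:RW1} and~\ref{lem:RW2} deliver), and your closing AM--GM arithmetic is fine --- but there are two genuine gaps. First, the reduction to a product of marginal probabilities is invalid. Writing $Z_1=\max_{i\le m}\tau(X_{1:i})$ and $Z_2=\max_{j\le n}(1-\tau)(-Y_{1:j})$, you use $\{Z_1+Z_2\le z\}\subseteq\{Z_1\le z\}\cap\{Z_2\le z\}$, but $Z_1$ and $Z_2$ can be negative (e.g.\ $\tau(X_{1:1})=X_1$), so one of them can exceed $z$ while the sum is still $\le z$. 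The cheap repair, adding $\mathbb{P}[Z_1<0]\le C/\sqrt{m}$ and $\mathbb{P}[Z_2<0]\le C/\sqrt{n}$, is too lossy: $1/\sqrt{m}$ is not $O(1/m+z^2)$. Handling the region where, say, $Z_1>z$ forces $Z_2\le z-Z_1<0$ requires quantifying how unlikely it is for a walk to persist below a strictly negative level (exponentially small in $m\varepsilon^2$, the paper's Lemma~\ref{lem:RW1}(ii)) and then integrating that tail against the law of the other maximum; this convolution/integration-by-parts step is exactly how the paper's proof proceeds and is absent from your plan.

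Second, your per-walk estimate rests on a sign error. The event $\{\tau(X_{1:i})\le z\}$ is equivalent (up to tie conventions) to $\#\{k\le i:X_k>z\}\le i(1-\tau)$, not $\ge$; equivalently the walk $S_i=\sum_{k\le i}(\mathbb{1}(X_k\le z)-\tau)$, whose increments have mean $F(z)-\tau\ge 0$, must stay nonnegative. So the walk persists \emph{with} its drift: there is no exponential decay in $mz^2$, and Hoeffding gives nothing for this event (it only controls events like $\{Z_1\le-\varepsilon\}$, where the drift opposes persistence). The probability in question is genuinely of order $1/\sqrt{m}+z$, and the hard part is the upper bound $\mathbb{P}[0<Z_1\le z]\le Cz$ --- an anti-concentration statement for the maximum, not a tail bound. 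The paper proves it (Lemma~\ref{lem:RW2}) by a change-of-measure comparison: it matches the tilted walk at level $z$ with one at a level $-w$ chosen so the likelihood ratio is a martingale, and uses optional stopping to show the two persistence probabilities differ by at most $Cz$. Your proposed toolkit (ballot/reflection for the centered walk plus a Hoeffding bound to ``exploit the drift'') cannot produce this step, which you yourself flag as the main obstacle; as written, the argument would not close.
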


Lemma~\ref{lem:woodroofe_meyer_argument} is the key probabilistic result underlying our bound in Proposition~\ref{prop:pieces} of the average number of constant pieces in our isotonic quantile estimator. Lemma~\ref{lem:woodroofe_meyer_argument} can be thought of as the quantile analogue of Proposition $5$ in~\cite{MW00}. Our proof technique is an adaptation of the general proof strategy of Proposition $5$ in~\cite{MW00} to our setting where we require bounds on maxima of suitably defined random walks with non symmetric $\pm 1$ valued increments instead of Gaussian increments which was the case in~\cite{MW00}. The required bounds on maxima of these random walks are carried out in Lemmas~\ref{lem:RW1},~\ref{lem:RW2} in Section~\ref{sec:woodroofe}. For now, we give the proof of Proposition~\ref{prop:pieces} assuming Lemma~\ref{lem:woodroofe_meyer_argument}.

\begin{proof}[Proof of Proposition~\ref{prop:pieces}]
Combining the results of Lemmas~\ref{lem:pieces_control},~\ref{lemma:prob_pieces} and~\ref{lem:woodroofe_meyer_argument} we obtain the following bound:
\begin{equation*}
\mathbb{P}[\hat{\theta}_{i-1} < \hat{\theta}_i] \lesssim  \min_{1 \leq l_i \leq i - 1, i \leq u_i \leq n} \left(\frac{1}{u_i - i + 1} + \frac{1}{i - l_i + 1} + (\theta^*_{u_i} - \theta^*_{l_i})^2\right)
\end{equation*}
Summing over the indices of $i$ we get
\begin{align*}
\E \hat{k} \lesssim 1 + \sum_{i = 2}^{n} \min_{1 \leq l_i \leq i - 1, i \leq u_i \leq n} \left(\frac{1}{u_i - i + 1} + \frac{1}{i - l_i + 1} + (\theta^*_{u_i} - \theta^*_{l_i})^2\right) \lesssim &\\ 1 + \underbrace{\sum_{i = 2}^{n} \min_{i \leq u_i \leq n} \left(\frac{1}{u_i - i + 1} + (\theta^*_{u_i} - \theta^*_{i})^2\right)}_{A} + \underbrace{\sum_{i = 2}^{n} \min_{1 \leq l_i \leq i - 1} \left(\frac{1}{i - l_i + 1} + (\theta^*_{i} - \theta^*_{l_i})^2\right)}_{B}.
\end{align*}
We will now bound the term $A$. The term $B$ can be bounded similarly.

We derive two different bounds for A.
\begin{enumerate}
	\item Bound $1$:

For each $2 \leq i \leq n$ we set the value of $u_i$ so that 
\begin{equation*}
u_i^* = \max\Big\{j \geq i: (\theta^*_{j} - \theta^*_{i})^2 \leq \frac{1}{j - i + 1}\Big\}.
\end{equation*}
Then we have 
\begin{equation*}
A \leq \sum_{i = 2}^{n} \left(\frac{1}{u_i^* - i + 1} + (\theta^*_{u_i} - \theta^*_{i})^2\right).
\end{equation*}
Now, for $j = 1,2,\dots$, define the sets 
\begin{equation*}
\mathcal{C}_j = \{2 \leq i \leq n: 2^{j - 1} \leq u_i^* - i + 1 < 2^j\}
\end{equation*}
Then by definition of $u_i^*$ we can write
\begin{equation*}
A \leq 2 \sum_{i = 2}^{n} \frac{1}{u_i^* -i + 1} \leq 2 \frac{|\mathcal{C}_j|}{2^{j - 1}} \lesssim \sum_{j} \frac{|\mathcal{C}_j|}{2^{j}}
\end{equation*}
where $|\mathcal{C}_j|$ denotes the cardinality of $\mathcal{C}_j.$ 

Now we claim that we can bound the cardinality of $\mathcal{C}_j$ as follows:
\begin{equation}\label{eq:card}
|\mathcal{C}_j| \leq 2^j \min\{\sqrt{2^j}, \frac{n}{2^{j - 1}}\}.
\end{equation}

Modulo the above claim we obtain 
$A \lesssim \sum_{j} \min\{\sqrt{2^j}, \frac{n}{2^j}\} \lesssim n^{1/3}$.
The final bound in the display above is obtained upon observing that $(\sqrt{2)}^j < n/2^{j}$ as long as $(\sqrt{2})^j < n^{1/3}$, and the final conclusion follows upon summing the two geometric series separately. 

It now remains to prove the claim~\eqref{eq:card}. If $\mathcal{C}_j$ is empty there is nothing to prove. Otherwise, define the index $i_1 = \min\{2 \leq i \leq n: i \in \mathcal{C}_j\}.$ Define the interval $\mathcal{B}_1 = [i_1,u_{i_1}^* + 1].$ If $\mathcal{C}_j - \mathcal{B}_1$ is empty, then stop otherwise define the next index $i_2 = \min\{u_{i_1}^* + 1 < i \leq n: i \in \mathcal{C}_j\}$ and define the interval $\mathcal{B}_2 = [i_2,u_{i_2}^* + 1]$. Iterate this process till it stops to obtain intervals $\mathcal{B}_1,\mathcal{B}_2,\dots,\mathcal{B}_{l}$ say, where $l$ is the number of blocks obtained in this process. Note that these intervals satisfy the following properties: a) they are pairwise disjoint, b) their union covers $\mathcal{C}_j.$ c) each of the intervals $\mathcal{B}_j$ satisfy that $\theta_{\mathrm{last}(\mathcal{B}_j)}^* - \theta_{\mathrm{first}(\mathcal{B}_j)}^* > \sqrt{2^{-j}}$ where $\mathrm{first}(B_j),\mathrm{last}(B_j)$ are the first and last indices of the interval $\mathcal{B}_j$, d) Each of the intervals satisfy $|\mathcal{B}_j| \geq 2^{j - 1}.$ e) Each of the intervals satisfy $|\mathcal{B}_j| \leq 2^{j}.$

Property (d) implies that $l \leq \frac{n}{2^{j - 1}}$. Also, by property (c) and the fact that $\theta^*_n - \theta^*_1 \leq 1$ we have $l \leq \sqrt{2^j}.$ Therefore, we can conclude
$$|\mathcal{C}_j| \leq \sum_{i = 1}^{l} |\mathcal{B}_l| \leq 2^j \min\{\sqrt{2^j}, \frac{n}{2^{j - 1}}\}.$$

\item Bound $2$: 

For each $2 \leq i \leq n$ we set the value of $u_i^*$ so that 

\begin{equation*}
u_i^* = \max\{j \geq i: (\theta^*_{u_i} - \theta^*_{i}) = 0\}.
\end{equation*}

Then we have 
\begin{align*}
A \leq \sum_{i = 2}^{n} \left(\frac{1}{u_i^* - i + 1}\right) \leq \sum_{i = 1}^{k} H(n_i) \lesssim k \log n
\end{align*}
where $k$ equals the number of constant pieces of $\theta^*$ and $H(m) = 1 + 1/2 + \dots + 1/m$ is the $m$th term of the partial sums of the harmonic sequence. 

\end{enumerate}
\end{proof}

\subsubsection{Proof of Proposition~\ref{prop:isoband}}

\begin{proof}[Proof of Proposition~\ref{prop:isoband}]
Let $\xi_j = x_{(j)}$ denote the $j$th order statistic of the set $\{x_i\}_{i = 1}^{n}.$ 
To prove~\eqref{eq:confi}, we first observe that because of our piecewise constant interpolation scheme, it is enough to show  coverage only at the design points, 
$$\mathbb{P}\big(L(\xi_j) \leq f(\xi_j) \leq U(\xi_j) \:\: \forall j \in [n]\big) \geq 1 -  \alpha.$$
This is a direct consequence of Theorem~\ref{thm:valid}.

Now we will prove the second part of Proposition~\ref{prop:isoband}, namely~\eqref{eq:width}. Let us denote $$W = \big[(U(Z) - L(Z)) \mathbb{I}(Z \in A)\big].$$
We can immediately write $\mathbb{E} W= (\mathbb{E} W| \mathbb{I}\{Z \in A\}) \: \mathbb{P}(Z \in A)= (\mathbb{E} W| \mathbb{I}\{Z \in A\})\:\Leb(A)$ where $\Leb(A)$ stands for the Lebesgue measure of $A.$ Therefore, it suffices to control the conditional expectation $\mathbb{E} W| \mathbb{I}\{Z \in A\}.$
Observe that computing  the conditional expectation $\mathbb{E} W| \mathbb{I}\{Z \in A\}$ is same as computing the unconditional expectation $\mathbb{E}(U(Z) - L(Z)) $ when $Z$ is now drawn from $\Unif(A)$ instead of $\Unif(0,1).$ 
We write $(U(Z) - L(Z)) = U(Z) - f(Z) + f(Z) - L(Z).$ We will now bound $\mathbb{E}(U(Z) - f(Z))$ where $Z \sim \Unif(A)$. A similar bound holds for $\mathbb{E}(f(Z) - L(Z)).$
By definition, $U(Z) = U(Z^{+}).$ Therefore, we first write
\begin{equation}\label{eq:1}
 U(Z) - f(Z) = U(Z^{+}) - f(Z^{+}) +  f(Z^{+}) - f(Z).
\end{equation}
Now, let us denote $I_j = [\xi_{j - 1},\xi_j) \cap A$ for $j = 1,\dots,n + 1$ and $\xi_{0} = 0, \xi_{n + 1} = 1.$ Define the good set 
$$G = \{\max_{1 \leq j \leq n + 1} \Leb(I_j)\leq C \frac{\log n}{n} \}. $$

\noindent 
We can now write 
\begin{align*}
&\mathbb{E} \:[U(Z^{+}) - f(Z^{+})] = \mathbb{E}\:\sum_{j = 1}^{n + 1} [U(\xi_j) - f(\xi_j)]\:\mathbb{1}(Z \in I_j) \leq \\&\mathbb{E}\: \Big[\Big(\max_{1 \leq j \leq n + 1} \Leb( I_j) \Big) \big(\sum_{j = 1}^{n + 1} [U(\xi_j) - f(\xi_j)]\big) \mathbb{1}(G) \Big] +  \mathbb{E}\:\Big[\max_{1 \leq j \leq n + 1} [U(\xi_j) - f(\xi_j)] \mathbb{1}(G^c) \Big]  \leq\\&C \frac{\log n}{n} \:\mathbb{E} \sum_{j = 1}^{n} [U(\xi_j) - f(\xi_j)] + \mathbb{P} (G^{c}) \leq  C \log n \:\:b_n +  n^{-2}. 
\end{align*}
To obtain the last inequality above, we use 
Theorem~\ref{thm:width} and Lemma~\ref{lem:orderstat} (specifically Remark \ref{rem:gaps_union}) to control the first term. The second term is controlled using the trivial bound that $U(\xi_j) - f(\xi_j) \leq 1$ for all $1\leq j \leq n+1$. 
Also, we have
\begin{align*}
&\mathbb{E} \:[f(Z^{+}) - f(Z)] \leq \mathbb{E} \:[f(Z^{+}) - f(Z^{-})] = \mathbb{E} \sum_{j = 1}^{n + 1} [f(\xi_j) - f(\xi_{j - 1})]\:\: \Leb( I_j) \leq \mathbb{E} \max_{1 \leq j \leq n + 1} \Leb( I_j) \leq \\& C \frac{\log n}{n} + \mathbb{P}(G^c) \leq  C \frac{\log n}{n} + n^{-2}.
\end{align*}
Combining the last two displays with~\eqref{eq:1} finishes the proof. 

\end{proof}

\subsection{Proof of Lemma~\ref{lem:woodroofe_meyer_argument}}\label{sec:woodroofe}
In this section we give the proof of Lemma~\ref{lem:woodroofe_meyer_argument}. This will require two intermediate lemmas. 
\begin{lemma}\label{lem:RW1}
	\label{lem:left_tail}
	Let $X_1, \cdots, X_m \sim F$ be iid. 
	\begin{itemize}
		\item[(i)] There exists $C>0$ such that 
		\begin{align}
		\mathbb{P}\Big[ \max_{1\leq i \leq m} \tau(X_{1:i} ) \leq 0 \Big] \leq \frac{C}{\sqrt{m}}. \nonumber 
		\end{align} 
		
		\item[(ii)] Assume there exists a constant $\varepsilon_0>0$ such that for all $\varepsilon \in (0,\varepsilon_0]$,  $F(0) - F( -\varepsilon) > c \varepsilon$ for some $c>0$. 
		Then there exists $\varepsilon>0$ and $C>0$ such that 
		\begin{align}
		\mathbb{P}\Big[\max_{1\leq i \leq m} \tau(X_{1:i}) \leq - \varepsilon \Big] \lesssim \frac{1}{\sqrt{m}}  \exp(- C m \varepsilon^2). \nonumber 
		\end{align} 
	\end{itemize} 
\end{lemma}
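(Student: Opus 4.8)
The plan is to reduce, deterministically, the events about the empirical $\tau$-quantile $\tau(X_{1:i})$ to events about a mean-zero random walk, and then to control the latter: for part~(i) by a classical persistence estimate, and for part~(ii) by an exponential change of measure that brings us back to the mean-zero case.

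\textbf{Reduction.} The starting point is the purely deterministic observation that if $q$ is a $\tau$-quantile of the empirical distribution of $\{X_1,\dots,X_i\}$ and $q\le a$, then by the definition of a $\tau$-quantile and monotonicity of the empirical CDF, $\tau\le \tfrac1i\#\{j\le i: X_j\le q\}\le \tfrac1i\#\{j\le i: X_j\le a\}$. With $a=0$ this yields $\{\tau(X_{1:i})\le 0\}\subseteq\{\sum_{j\le i}(\mathbb{1}(X_j\le 0)-\tau)\ge 0\}$ for every $i$, and intersecting over $i\le m$,
\[
\Big\{\max_{1\le i\le m}\tau(X_{1:i})\le 0\Big\}\subseteq\{S_i\ge 0\ \ \forall\, i\le m\},\qquad S_i:=\sum_{j=1}^i\big(\mathbb{1}(X_j\le 0)-\tau\big).
\]
Since $F(0)=\tau$, the increments are i.i.d., bounded, mean zero, and have variance $\tau(1-\tau)>0$, so $(S_i)$ is a nondegenerate mean-zero random walk.

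\textbf{Part (i).} It thus suffices to prove the standard bound $\mathbb{P}[S_1\ge 0,\dots,S_m\ge 0]\le C/\sqrt m$ for any nondegenerate mean-zero random walk with bounded increments. I would deduce this from the Sparre--Andersen identity $\sum_{m\ge 0}\mathbb{P}[S_1\ge 0,\dots,S_m\ge 0]\,z^m=\exp\big(\sum_{m\ge 1}\tfrac{z^m}{m}\mathbb{P}[S_m\ge 0]\big)$: since $\mathbb{P}[S_m\ge 0]\to \tfrac12$ by the central limit theorem and $|\mathbb{P}[S_m\ge 0]-\tfrac12|\le C m^{-1/2}$ by Berry--Esseen (all moments exist, the increments being bounded), the generating function equals $e^{g(z)}(1-z)^{-1/2}$ with $g$ bounded on $[0,1]$, and a Tauberian theorem applied to the (monotone nonincreasing) coefficient sequence gives the $m^{-1/2}$ rate. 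An alternative, more self-contained route is to stop $S$ at the first time $\sigma$ it becomes negative, use optional stopping to get $\mathbb{E}[S_m\mathbb{1}(\sigma>m)]\le 1$, and combine this with the meander-type lower bound $\mathbb{E}[S_m\mid \sigma>m]\gtrsim\sqrt m$.

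\textbf{Part (ii).} Repeating the reduction with threshold $a=-\varepsilon$ gives
\[
\Big\{\max_{1\le i\le m}\tau(X_{1:i})\le -\varepsilon\Big\}\subseteq\{\widetilde S_i\ge i\delta\ \ \forall\, i\le m\}=:E,\qquad \widetilde S_i:=\sum_{j=1}^i\big(\mathbb{1}(X_j\le -\varepsilon)-F(-\varepsilon)\big),
\]
where $\delta:=\tau-F(-\varepsilon)=F(0)-F(-\varepsilon)>c\varepsilon>0$ by hypothesis; we may assume $0<F(-\varepsilon)<1$, as otherwise the right-hand event is empty. Let $\psi(\lambda)=\log\mathbb{E}\,e^{\lambda(\mathbb{1}(X_1\le -\varepsilon)-F(-\varepsilon))}$ and choose $\lambda^\star>0$ with $\psi'(\lambda^\star)=\delta$ (possible since $\delta$ lies strictly between the mean $0$ and the right endpoint $1-F(-\varepsilon)$). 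Under the tilted law $\mathbb{Q}$ with $d\mathbb{Q}/d\mathbb{P}=e^{\lambda^\star\widetilde S_m-m\psi(\lambda^\star)}$ the increments remain i.i.d., bounded, nondegenerate, with mean $\delta$; and since $\widetilde S_m\ge m\delta$ on $E$,
\[
\mathbb{P}[E]=\mathbb{E}_{\mathbb{Q}}\big[e^{-\lambda^\star\widetilde S_m+m\psi(\lambda^\star)}\mathbb{1}_E\big]\le e^{-m\psi^\star(\delta)}\,\mathbb{Q}[E],\qquad \psi^\star(\delta):=\lambda^\star\delta-\psi(\lambda^\star)>0.
\]
Under $\mathbb{Q}$ the process $\widetilde S_i-i\delta$ is a nondegenerate mean-zero bounded-increment walk and $E=\{\widetilde S_i-i\delta\ge 0\ \forall i\le m\}$, so the estimate from part~(i) gives $\mathbb{Q}[E]\le C/\sqrt m$. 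As $\varepsilon$ is a fixed constant, $\psi^\star(\delta)$ is a fixed positive number, hence $\ge C'\varepsilon^2$ for a suitable $C'$; therefore $\mathbb{P}[E]\le \tfrac{C}{\sqrt m}\,e^{-C'\varepsilon^2 m}$, as claimed.

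\textbf{Main obstacle.} The one genuinely nontrivial ingredient is the persistence bound $\mathbb{P}[S_1\ge 0,\dots,S_m\ge 0]\le C/\sqrt m$ with the \emph{sharp} $m^{-1/2}$ rate; a cruder rate would not propagate correctly through Lemma~\ref{lem:woodroofe_meyer_argument}. Everything else---the deterministic reduction, the exponential tilt, and the Berry--Esseen/optional-stopping inputs---is routine, and making the persistence bound fully self-contained rather than quoting Sparre--Andersen theory is where the real work lies.
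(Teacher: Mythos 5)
Your proposal is correct and follows essentially the same route as the paper's proof: the identical reduction of $\{\tau(X_{1:i})\le a\}$ to a persistence event for the Bernoulli random walk $\sum_j(\mathbb{1}(X_j\le a)-\tau)$, the $C/\sqrt m$ persistence bound from classical fluctuation theory (the paper just cites Feller, Ch.~XII.7, where you sketch the Sparre--Andersen/Berry--Esseen derivation), and for part (ii) an exponential tilt that is exactly the paper's Bernoulli likelihood-ratio computation, since your $\psi^\star(\delta)$ equals the paper's $h(\tau)-h(F(-\varepsilon))$. The only nuance is your closing step ``$\psi^\star(\delta)$ is a fixed positive number, hence $\ge C'\varepsilon^2$'': this suffices for the statement as written, but the paper's Taylor expansion of $h$ (equivalently, expanding $\psi^\star$ near $0$, or Pinsker) yields the quadratic rate with a constant uniform over small $\varepsilon$, which is the form actually invoked with varying $\varepsilon$ in the proof of Lemma~\ref{lem:woodroofe_meyer_argument}.
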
 

\begin{proof}[Proof of Lemma \ref{lem:left_tail}]
	\begin{itemize} 
		
		\item[(i)] We define $W_j = \mathbf{1}(X_j \leq 0)$. For any $1\leq i \leq m$, observe that 
		\begin{align}
		\Big\{ \tau(X_{1:i}) \leq 0 \Big\} = \Big\{ \sum_{j=1}^{i} (W_j - \tau) \geq 0 \Big\}. \nonumber 
		\end{align} 
		Define $S_i = \sum_{j=1}^{i} (W_j - \tau)$, and observe that  
		\begin{align}
		\mathbb{P}\Big[\max_{1\leq i \leq m} \tau(X_{1:i}) \leq 0 \Big] &= \mathbb{P}\Big[ \min_{1\leq i \leq m} S_i \geq 0 \Big] \sim \frac{C}{\sqrt{m}}, \nonumber 
		\end{align} 
		where the last display follows immediately from \cite[Chapter XII.7, Theorem 1a]{Feller2}.

		\item[(ii)]  Define $W_j = \mathbf{1}(X_j \leq - \varepsilon)$ and $V_j = \mathbf{1}(X_j \leq 0)$. For any $1\leq i \leq m$, observe that 
		\begin{align}
		\Big\{ \tau(X_{1:i}) \leq - \varepsilon \Big\} = \Big\{ \sum_{j=1}^{i} (W_j - \tau) \geq 0 \Big\}. \nonumber 
		\end{align} 
		Define $S_i = \sum_{j=1}^{i} (W_j - \tau)$, and observe that 
		\begin{align} 
		\Big\{ \max_{1\leq i \leq m} \tau(X_{1:i}) \leq - \varepsilon \Big\} = \Big\{ \min_{1\leq i \leq m} S_i \geq 0 \Big\}. \nonumber 
		\end{align} 
		Let $\{\xi_1, \cdots, \xi_m\}$ be iid random variables such that under $\mathbb{P}_{-\varepsilon}$, $\mathbb{P}_{-\varepsilon}[\xi_1 =1]= F(-\varepsilon)$, while under $\mathbb{P}_0[\xi_1=1] = F(0) = \tau$. Using the discussion above, we have, 
		\begin{align}
		\mathbb{P}\Big[\max_{1\leq i \leq m} \tau(X_{1:i}) \leq - \varepsilon \Big] &= \mathbb{P}\Big[ \min_{1\leq i \leq m} S_i \geq 0 \Big] \nonumber \\ 
		&= \mathbb{P}_{-\varepsilon}\Big[ \min_{1\leq i \leq m} \sum_{j=1}^{i} (\xi_j - \tau) \geq 0] \nonumber \\
		&= \mathbb{E}_0 \Big[ \mathbf{1}\Big(\min_{1\leq i \leq m} \sum_{j=1}^{i} (\xi_j - \tau) \geq 0 \Big)  \frac{\mathrm{d}\mathbb{P}_{-\varepsilon}}{\mathrm{d}\mathbb{P}_0}\Big]. \nonumber 
		\end{align} 
		Observe that 
		\begin{align}
		\frac{\mathrm{d}\mathbb{P}_{-\varepsilon}}{\mathrm{d}\mathbb{P}_0}= \Big( \frac{F(-\varepsilon)/(1-F(-\varepsilon))}{\tau/(1-\tau)} \Big)^{\sum_{j=1}^{m} (\xi_j - \tau)} \cdot \exp\Big[ m \Big( \tau \log \frac{F(-\varepsilon)}{\tau} + (1- \tau) \log \frac{1- F(-\varepsilon)}{1- \tau} \Big) \Big]. 
		\end{align}
		Using the fact that $F(x)/(1-F(x))$ is non-decreasing, we have, 
		\begin{align} 
		\mathbb{P}\Big[\max_{1\leq i \leq m} \tau(X_{1:i}) \leq - \varepsilon \Big] &\leq \mathbb{E}_0 \Big[ \mathbf{1}\Big(\min_{1\leq i \leq m} \sum_{j=1}^{i} (\xi_j - \tau) \geq 0 \Big)  \Big] \exp\Big[ m \Big( \tau \log \frac{F(-\varepsilon)}{\tau} + (1- \tau) \log \frac{1- F(-\varepsilon)}{1- \tau} \Big) \Big] \nonumber \\
		&\lesssim \frac{1}{\sqrt{m}} \exp(-m (h(\tau) - h(F(-\varepsilon))), \nonumber 
		\end{align} 
		where we set $h(a) = \tau \log a + (1-\tau) \log (1-a)$. The function $h$ is concave, and is maximized at $a= \tau$. This implies, 
		\begin{align}
		h(\tau) - h(F(-\varepsilon)) = - \frac{h''(\xi)}{2} (\tau - F(-\varepsilon))^2 \geq C \varepsilon^2. \nonumber
		\end{align} 
		for $\varepsilon>0$ small enough. This completes the proof. 
		

	\end{itemize} 
\end{proof}

\begin{lemma}\label{lem:RW2}
	\label{lemma:density_near_origin}
	Assume that there exists $z_0>0$ such that for all $z \in (0, z_0]$, $F(z) - F(0) \leq cz$ for some $c>0$. Then there exists $C>0$ such that for all $0\leq z \leq 1$, 
	\begin{align}
	\mathbb{P}\Big[0 < \max_{1\leq i \leq m} \tau(X_{1:i}) \leq z \Big] \leq Cz. \nonumber 
	\end{align} 
\end{lemma}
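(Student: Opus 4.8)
The plan is to reformulate the event in terms of random walks, reduce the bound to a Lipschitz estimate for a survival probability as a function of the "up‑probability", and then prove that estimate by a monotone coupling together with classical random‑walk bounds. Throughout, $p:=F(z)-F(0)$, and I use the convention (as in the proof of Lemma~\ref{lem:RW1}) that $\tau(u_{1:i})\le t\iff \#\{l\le i:u_l\le t\}\ge i\tau$, and $F(0)=\tau$. The case $z>z_0$ is trivial since then the probability is $\le 1\le z/z_0$, so assume $0\le z\le z_0$.

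First, for any $t\ge 0$, writing $M_m=\max_{1\le i\le m}\tau(X_{1:i})$,
\[
\{M_m\le t\}=\Big\{\textstyle\sum_{l=1}^{i}\big(\mathbf 1(X_l\le t)-\tau\big)\ge 0\ \ \forall\, i\in[m]\Big\},
\]
and since the $\mathbf 1(X_l\le t)$ are i.i.d.\ $\mathrm{Bern}(F(t))$ this gives $\mathbb P[M_m\le t]=g_m(F(t))$, where $g_m(\rho)$ is the probability that a random walk with $\{1-\tau,-\tau\}$‑valued increments and up‑probability $\rho$ stays $\ge 0$ for $m$ steps, a polynomial in $\rho$. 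Hence $\mathbb P[0<M_m\le z]=g_m(F(z))-g_m(F(0))$, and it suffices to show the horizon‑uniform Lipschitz bound $g_m(\rho)-g_m(\rho')\le C(\rho-\rho')$ for $\tau\le\rho'\le\rho\le\tau+cz_0$; combined with the hypothesis $F(z)-F(0)\le cz$ this finishes the proof.

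Second, I would couple the two walks by drawing $U_1,\dots,U_m$ i.i.d.\ $\mathrm{Unif}(0,1)$ and calling step $l$ an up‑step for the $\rho$‑walk iff $U_l\le\rho$; then the $\rho$‑walk dominates the $\rho'$‑walk, they agree until the first "extra up‑step" $J=\min\{l:U_l\in(\rho',\rho]\}$ (per‑step probability $\delta:=\rho-\rho'$; such steps fall on down‑steps of the $\rho'$‑walk), at $J$ the $\rho$‑walk jumps up by $1$, and after $J$ it is again a walk with up‑probability $\rho$. Decomposing $g_m(\rho)-g_m(\rho')=\mathbb P[\rho\text{-walk survives }[1,m],\ \rho'\text{-walk does not}]$ over the value of $J=j$, and noting that on this event the $\rho'$‑walk must survive $[1,j-1]$, one obtains
\[
g_m(\rho)-g_m(\rho')\;\le\;\delta\sum_{j=1}^{m}\mathbb E\Big[\mathbf 1\big(W'_1,\dots,W'_{j-1}\ge 0\big)\,(1-q')^{Z'_{j-1}}\,\beta^{(\rho)}_{m-j}\big(W'_{j-1}+1-\tau\big)\Big],
\]
where $W'$ is the $\rho'$‑walk started at $0$, $Z'_{j-1}=(j-1)(1-\tau)-W'_{j-1}$ is its number of down‑steps by time $j-1$ (a deterministic identity), $q'=\delta/(1-\rho')$, the factor $(1-q')^{Z'_{j-1}}$ accounts for "no extra up‑step before $j$", and $\beta^{(\rho)}_k(h)$ is the probability that a $\rho$‑walk started from height $h$ stays $\ge 0$ for $k$ further steps.

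Third, I would estimate the sum using three classical random‑walk facts (see e.g.\ \cite[Ch.~XII]{Feller2} and standard renewal theory): (i) $\mathbb P[W'_1,\dots,W'_{j-1}\ge 0]\le C j^{-1/2}$ and, conditionally on survival, $W'_{j-1}=O(\sqrt j)$ with sub‑Gaussian fluctuations at scale $\sqrt j$ (so the factor $(1-q')^{-W'_{j-1}}=e^{O(q' W'_{j-1})}$ hidden in $(1-q')^{Z'_{j-1}}$ is harmless once $q'\sqrt j\lesssim 1$, i.e.\ $j\lesssim\delta^{-2}$, and for larger $j$ it is killed by $(1-q')^{(j-1)(1-\tau)}\approx e^{-\delta j}$); and (ii) $\beta^{(\rho)}_k(h)\le C(h+1)\min\{1/\sqrt{k+1},\,\delta\}$, the first term being the reflection/ballot bound and the second a ruin/renewal bound for a walk with small positive drift $\delta$. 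The point is that for the long‑horizon ranges one must use the second bound: with $\beta^{(\rho)}_{m-j}(W'_{j-1}+1-\tau)\le C\delta(W'_{j-1}+1)=O(\delta\sqrt j)$ on the survival event, the $j$‑th summand is $\asymp\delta\cdot j^{-1/2}\cdot\delta\sqrt j=\delta^2$, and since $(1-q')^{Z'_{j-1}}\approx e^{-\delta j}$ restricts the sum to $j\lesssim\delta^{-2}$, the total is $\lesssim\delta\cdot\delta^2\cdot\delta^{-2}=\delta$; hence $g_m(\rho)-g_m(\rho')\le C\delta$ uniformly in $m$, which is exactly what is needed. (A fully rigorous treatment splits according to whether $Z'_{j-1}$ is atypically small, but these are routine Binomial tail estimates.)

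The main obstacle is precisely this last step: extracting the \emph{linear}-in-$z$ rate — a crude union bound over $J$, or over the first time the centered walk dips below $0$, only gives $z^{1/2}$ — and doing so uniformly in the horizon $m$. The resolution is to pair the probability $\asymp j^{-1/2}$ of surviving until the first extra up‑step with the \emph{small} probability $\asymp\delta\sqrt j$ that the resulting positive‑drift walk then survives the remaining (possibly long) horizon from that height; their product is $\asymp\delta$, and the geometric cutoff $e^{-\delta j}$ keeps the $j$‑sum bounded. Everything else — the reformulation in the first paragraph and the coupling decomposition in the second — is bookkeeping.
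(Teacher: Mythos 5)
Your reduction to a horizon-uniform Lipschitz bound for $g_m(\cdot)$ and the coupling decomposition over the first extra up-step $J$ are fine, but the key estimate (ii) is false, and with it the final accounting fails. For a walk with up-probability $\rho=\tau+\delta$, the survival probability from height $h$ over $k$ steps satisfies $\beta^{(\rho)}_k(h)\le C(h+1)\max\{1/\sqrt{k+1},\,\delta\}$, not $\min$: $\beta^{(\rho)}_k(h)$ is nonincreasing in $k$ with limit of order $(h+1)\delta$, so the ruin bound $C(h+1)\delta$ is only available for $k\gtrsim\delta^{-2}$, while for shorter horizons one only has the ballot bound $\min\{1,(h+1)/\sqrt{k}\}$, which is much larger (e.g.\ $\beta^{(\rho)}_1(h)\ge\rho$, a constant, whereas your bound would make it $O(\delta)$). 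Once corrected, your displayed inequality cannot close the argument: having dropped the requirement that the $\rho'$-walk actually fails, its right-hand side equals $\mathbb{P}[J\le m,\ \rho\text{-walk survives }[1,m]]$. Evaluating your sum with the correct bounds (the tilting factor $(1-q')^{Z'_{j-1}}\approx e^{-c\delta j}$ only cuts the sum at $j\asymp\delta^{-1}$, not $\delta^{-2}$, and on the survival event $\mathbb{E}[\mathbf{1}(\mathrm{surv})\,(W'_{j-1}+1)]\asymp 1$) gives a total of order $\min\{\delta\sqrt{m},\,1/\sqrt{m}\}$, which at $m\asymp\delta^{-1}$ is of order $\sqrt{\delta}$ --- exactly the crude $\sqrt{z}$ rate you set out to beat. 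This is not a bookkeeping issue: in the window $\delta^{-1}\lesssim m\lesssim\delta^{-2}$ the event $\{J\le m\}$ is typical and the $\rho$-walk survives with probability $\asymp 1/\sqrt{m}\gg\delta$, so any bound that discards the failure of the $\rho'$-walk (a downcrossing from height $\asymp\sqrt{J}$ while the coupled $\rho$-walk stays nonnegative) is doomed to exceed $C\delta$ there; exploiting that failure within the coupling is where the real work lies.

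For comparison, the paper sidesteps this regime analysis with a conjugate change of measure. It bounds the probability by the difference of survival probabilities of two walks with up-probabilities $F(z)$ and $F(-w)$, where $w$ is chosen so that $h(F(-w))=h(F(z))$ for $h(a)=\tau\log a+(1-\tau)\log(1-a)$; this choice makes the likelihood ratio $\bigl(\tfrac{F(z)/(1-F(z))}{F(-w)/(1-F(-w))}\bigr)^{\sum_{j\le i}(\xi_j-\tau)}$ an exact $\mathbb{P}_{-w}$-martingale. Writing the difference as $\mathbb{E}_{-w}\bigl[\mathbf{1}(T\le m)\bigl(1-\mathrm{d}\mathbb{P}_z/\mathrm{d}\mathbb{P}_{-w}\bigr)\bigr]$ with $T$ the first passage below $0$, and using that the stopped sum satisfies $\sum_{j\le T}(\tau-\xi_j)\le 1$, optional stopping together with $1-e^{-x}\le x$ yields a bound of $\log\bigl(\tfrac{F(z)/(1-F(z))}{F(-w)/(1-F(-w))}\bigr)\lesssim F(z)-F(-w)\lesssim z$, uniformly in $m$. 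A rescued coupling proof would need an analogue of this cancellation; as written, your argument has a genuine gap.
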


\begin{proof}[Proof of Lemma \ref{lemma:density_near_origin}]
	Note that by choosing $C>0$ sufficiently large if necessary, the bound follows trivially in any interval $[z_0,1]$. Thus for the subsequent proof, we assume, without loss of generality, that $z \in [0,z_0]$ for some $z_0$ sufficiently small, to be chosen suitably. Fix $z>0$. First observe that 
	\begin{align}
	\mathbb{P}\Big[0 < \max_{1\leq i \leq m} \tau(X_{1:i}) \leq z \Big] &\leq \mathbb{P}\Big[-w < \max_{1\leq i \leq m} \tau(X_{1:i}) \leq z \Big] \nonumber \\
	&= \mathbb{P}\Big[\max_{1\leq i \leq m} \tau(X_{1:i}) \leq z \Big] - \mathbb{P}\Big[\max_{1\leq i \leq m} \tau(X_{1:i}) \leq -w\Big] \nonumber 
	\end{align} 
	for any $w>0$. Define $W_j = \mathbf{1}(X_j \leq z)$ and $V_j = \mathbf{1}(X_j \leq - w)$. We define $S^{(1)}_i = \sum_{j=1}^{i} (W_j - \tau)$ and $S^{(2)}_i = \sum_{j=1}^{i} (V_j - \tau)$. Using the same observation as in the proof of Lemma \ref{lem:left_tail}, we have,  
	\begin{align}
	\mathbb{P}\Big[0 < \max_{1\leq i \leq m} \tau(X_{1:i}) \leq z \Big] \leq \mathbb{P}\Big[\min_{1\leq i \leq m} S^{(1)}_i \geq 0 \Big] - \mathbb{P}\Big[\min_{1\leq i \leq m} S^{(2)}_i \geq 0 \Big]. \nonumber 
	\end{align} 
	Observe that $\mathbb{E}[W_j - \tau] = F(z) - \tau >0$, while $\mathbb{E}[V_j - \tau] = F(-w) - \tau <0$---thus $S^{(1)}$ and $S^{(2)}$ are biased random walks in the positive and negative direction respectively. Note that the two terms above correspond to the probability of the same event---albeit under different probability measures. It will be convenient for us to denote the distribution of the $W_j$ variables as $\mathbb{P}_z$, and that of the $V_j$ variables as $\mathbb{P}_{-w}$. Under this new notation, 
	\begin{align}
	\mathbb{P}\Big[0 < \max_{1\leq i \leq m} \tau(X_{1:i}) \leq z \Big] \leq \mathbb{P}_{z} \Big[\min_{1\leq i \leq m} \sum_{j=1}^{i} (\xi_j - \tau)  \geq 0 \Big]  - \mathbb{P}_{-w} \Big[\min_{1\leq i \leq m} \sum_{j=1}^{i} (\xi_j - \tau)  \geq 0 \Big]. \label{eq:density_int1} 
	\end{align} 
	where $\xi_j$ are iid Bernoulli random variables under both measures, with $\mathbb{P}_{z}[\xi_1=1] = F(z)$ and $\mathbb{P}_{-w}[\xi_1=1] = F(-w)$. 
	Next, define a stopping time $T = \inf \{ k \geq 1 : \sum_{j=1}^{k} (\xi_j - \tau) <0\}$. In turn, this implies, 
	\begin{align}
	&\mathbb{P}_{z} \Big[\min_{1\leq i \leq m} \sum_{j=1}^{i} (\xi_j - \tau)  \geq 0 \Big] = \mathbb{P}_z [ T > m ] , \nonumber \\
	&\mathbb{P}_{-w} \Big[\min_{1\leq i \leq m} \sum_{j=1}^{i} (\xi_j - \tau)  \geq 0 \Big] = \mathbb{P}_{-w} [ T > m ]. \nonumber 
	\end{align} 
	Plugging this back into \eqref{eq:density_int1}, we have, 
	\begin{align}
	\mathbb{P}\Big[0 < \max_{1\leq i \leq m} \tau(X_{1:i}) \leq z \Big] = \mathbb{P}_z [ T > m ] - \mathbb{P}_{-w} [ T > m ] = \mathbb{P}_{-w} [ T \leq m ] - \mathbb{P}_{z} [ T \leq m ]. \label{eq:density_int2}
	\end{align} 
	Continuing, we have, 
	\begin{align}
	\mathbb{P}_{-w} [ T \leq m ] - \mathbb{P}_{z} [ T \leq m ] = \mathbb{E}_{-w}\Big[ \mathbf{1}(T \leq m) \Big(1 - \frac{\mathrm{d}\mathbb{P}_z}{\mathrm{d}\mathbb{P}_{-w}} \Big) \Big]. 
	\end{align} 
	Further, 
	\begin{align}
	\frac{\mathrm{d}\mathbb{P}_z}{\mathrm{d}\mathbb{P}_{-w}} &= \frac{(F(z))^{\sum_{j=1}^{m} \xi_j } (1- F(z))^{m- \sum_{j=1}^{m} \xi_j}}{(F(-w))^{\sum_{j=1}^{m} \xi_j } (1- F(-w))^{m- \sum_{j=1}^{m} \xi_j}} \nonumber \\
	&=  \Big( \frac{F(z)/(1-F(z))}{F(-w)/(1-F(-w))} \Big)^{\sum_{j=1}^{m} (\xi_j - \tau)} \cdot \exp\Big[ m \Big( \tau \log \frac{F(z)}{F(-w)} + (1- \tau) \log \frac{1- F(z)}{1- F(-w)} \Big) \Big]. \nonumber 
	\end{align} 
	At this point, we choose $w$ such that 
	\begin{align}
	\tau \log \frac{F(z)}{F(-w)} + (1- \tau) \log \frac{1- F(z)}{1- F(-w)} =0 . \nonumber 
	\end{align} 
	To see that such a choice is indeed possible for $z$ sufficiently small, define a function $h(a) = \tau \log a + (1-\tau) \log (1-a)$. By direct computation, we note that $h$ is strictly concave, and attains it's maximum at $a = \tau$. Thus the function $h(F(t))$ is strictly increasing on $[-t_0,0]$ for some $t_0>0$. By the intermediate value theorem, there exists a unique $w>0$ such that $h(F(-w)) = h(F(z))$ for $z>0$ sufficiently small. Further, using Taylor series expansion near $a=\tau$, we have, 
	\begin{align}
	h(F(z)) = h(\tau) + \frac{h''(\kappa(z))}{2} (F(z) - \tau)^2, \,\,\,\, h(F(-w)) = h(\tau) + \frac{h''(\kappa(-w))}{2} (F(-w) - \tau)^2. \nonumber 
	\end{align} 
	Equating $h(F(-w)) = h(F(z))$, we have, 
	\begin{align}
	\Big( \frac{F(-w)- \tau}{F(z) - \tau} \Big)^2 = \frac{h''(\kappa(z))}{h''(\kappa(-w))}. \nonumber 
	\end{align} 
	
	By continuity, $\kappa(z), \kappa(-w) \to \tau$, and thus there exists a universal constant $C_1>0$ such that 
	\begin{align}
	| F(-w) - \tau | \leq C_1 |F(z) - \tau|. \nonumber 
	\end{align}
	
	Armed with this choice of $w$, we note that 
	\begin{align}
	\frac{\mathrm{d}\mathbb{P}_z}{\mathrm{d}\mathbb{P}_{-w}} &=  \Big( \frac{F(z)/(1-F(z))}{F(-w)/(1-F(-w))} \Big)^{\sum_{j=1}^{m} (\xi_j - \tau)} \nonumber 
	\end{align}
	is a martingale with respect to the canonical filtration under $\mathbb{P}_{-w}$, and therefore, 
	\begin{align}
	\mathbb{P}_{-w} [ T \leq m ] - \mathbb{P}_{z} [ T \leq m ] &= \mathbb{E}_{-w}\Big[ \mathbf{1}(T \leq m) \Big(1 - \frac{\mathrm{d}\mathbb{P}_z}{\mathrm{d}\mathbb{P}_{-w}} \Big) \Big]  \nonumber \\
	&=  \mathbb{E}_{-w}\Big[ \mathbf{1}(T \leq m)  \Big(1 - \Big( \frac{F(z)/(1-F(z))}{F(-w)/(1-F(-w))} \Big)^{\sum_{j=1}^{T} (\xi_j - \tau)} \Big) \Big]. \nonumber 
	\end{align} 
	We note that $\sum_{j=1}^{T} (\xi_j - \tau) <0$, and thus using the inequality $1-e^{-x} \leq x$ for $x \geq 0$, we have, 
	\begin{align}
	\mathbb{P}_{-w} [ T \leq m ] - \mathbb{P}_{z} [ T \leq m ] &\leq \mathbb{E}_{-w} \Big[ \mathbf{1}(T \leq m) \sum_{j=1}^{T} (\tau - \xi_j) \Big]  \log \Big( \frac{F(z)/(1-F(z))}{F(-w)/(1-F(-w))}\Big). \nonumber \\
	&\leq \log \Big( \frac{F(z)/(1-F(z))}{F(-w)/(1-F(-w))}\Big), \nonumber 
	\end{align} 
	where the last inequality follows upon observing that $\sum_{j=1}^{T} (\tau - \xi_j) \leq 1$. Finally, 
	\begin{align}
	\log \Big( \frac{F(z)/(1-F(z))}{F(-w)/(1-F(-w))}\Big) &= \log \Big( 1+ \frac{F(z) - F(-w)}{F(-w)} \Big)  - \log \Big(1- \frac{F(z) - F(-w)}{1- F(-w)} \Big) \nonumber \\
	&\lesssim (F(z) - F(-w)) \lesssim F(z) - \tau \leq C_2 z \nonumber 
	\end{align} 
	for some constant $C_2>0$. 
	This completes the proof. 
\end{proof}

Armed with Lemmas~\ref{lem:RW1},~\ref{lem:RW2}, we are now ready to prove Lemma~\ref{lem:woodroofe_meyer_argument}.

\begin{proof}[Proof of Lemma \ref{lem:woodroofe_meyer_argument}]
	Note that it suffices to establish this bound for $0<z\leq 1$---the bound for larger $z$ follows trivially by increasing $C$ if necessary. For ease of notation, denote $\max_{1\leq i \leq m} \tau(X_{1:i}) = Z_1$ and $\max_{1\leq j \leq n} (1- \tau)(-Y_{1:j}) = Z_2$, and let $G_1$ and $G_2$ denote the corresponding cdfs. We have, using $\{Z_1 + Z_2 \leq z\} \subset \{ \min\{Z_1, Z_2\} \leq z\}$, we have, 
	\begin{align}
	\mathbb{P}[Z_1 + Z_2 \leq z] \leq \mathbb{P}[Z_1 \leq z, Z_2 \leq z - Z_1] + \mathbb{P}[Z_2 \leq z, Z_1 \leq z - Z_2]. \nonumber 
	\end{align}
	We derive an upper bound on the first term. The second term is similar, and is thus omitted. We have, 
	\begin{align}
	\mathbb{P}[Z_1 \leq z, Z_2 \leq z - Z_1] &\leq \mathbb{P}[Z_1 \leq -1] + \mathbb{P}[-1 < Z_1 \leq z, Z_2 \leq z - Z_1]\nonumber \\
	&\leq \exp(-Cm) + \int_{-1}^{z} G_2(z-x) \mathrm{d}G_1(x) \nonumber \\
	&\leq \exp(-Cm) + \int_{-1}^{z} \Big( \frac{1}{\sqrt{n}} + C_1 (z-x) \Big) \mathrm{d}G_1(x) \nonumber \\
	&\leq \exp(-Cm) + \frac{G_1(z)}{\sqrt{n}} + C_1 \int_{-1}^{z} G_1(y) \mathrm{d}y \nonumber\\
	&\leq \exp(-Cm) + \frac{1}{\sqrt{n}} \Big( \frac{1}{\sqrt{m}} + C_2 z \Big) + \frac{C_1}{\sqrt{m}} \int_{-1}^{0} \exp(-C_2 m y^2) \mathrm{d}y +  C_1 \int_0^{z} \Big( \frac{1}{\sqrt{m}} + y \Big) \mathrm{d}y. \nonumber  \\
	&= \Theta\Big( \frac{1}{m} + \frac{1}{\sqrt{nm}} + \frac{z}{\sqrt{n}} + \frac{z}{\sqrt{m}} + z^2  \Big). \nonumber  
	\end{align}
	In the display above, the second inequality follows by using Lemma~\ref{lem:RW1} on the first term, the third inequality follows upon using Lemmas~\ref{lem:RW1},\ref{lem:RW2} on the second term, the fourth inequality follows using integration by parts. The fifth inequality follows by again using Lemmas~\ref{lem:RW1},\ref{lem:RW2} on the last two terms. The final conclusion follows from the inequality $2ab \leq a^2 + b^2$. 
\end{proof}

\section{Some auxilliary results} 

\begin{lemma}\label{lem:orderstat}
\label{lem:gaps} 
Let $U_1, \cdots, U_n \sim \Unif(0,1)$ be iid random variables, and let $0= U_{(0)} < U_{(1)} < \cdots < U_{(n)} <U_{(n+1)}=1$ denote the corresponding order statistics. Then there exists $C>0$ sufficiently large such that 
\begin{align}
\mathbb{P}\Big[ U_{(k+1)} - U_{(k)} < C \frac{\log n}{n} \,\, \mathrm{for\, all} \,\, 0\leq k \leq n \Big] \geq 1-n^{-2}.  \nonumber 
\end{align} 
\end{lemma}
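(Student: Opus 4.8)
The plan is to control the complement of the stated event by a union bound over the $n+1$ spacings $S_k := U_{(k+1)}-U_{(k)}$, $0\le k\le n$. The key observation is that each spacing has the same marginal law as $U_{(1)}=\min(U_1,\dots,U_n)$: indeed, by the standard exponential representation of uniform order statistics one may write $S_k = E_k/\sum_{j=0}^n E_j$ for i.i.d.\ $\mathrm{Exp}(1)$ variables $E_0,\dots,E_n$, so $(S_0,\dots,S_n)$ is exchangeable and hence, for every $k$,
\[
\mathbb{P}\big[S_k \ge t\big] = \mathbb{P}\big[U_{(1)}\ge t\big] = (1-t)^n \le e^{-nt}.
\]
(If one prefers to avoid exchangeability, the same estimate follows from an elementary covering argument: a spacing exceeding $t$ forces the existence of a subinterval of $[0,1]$ of length $t$ whose interior contains no $U_i$; covering $[0,1]$ by $\lceil 2/t\rceil$ consecutive intervals of length at most $t/2$, such a subinterval contains one of the covering intervals, and the chance that a fixed interval of length $t/2$ misses all $n$ points is $(1-t/2)^n\le e^{-nt/2}$.)

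Given this, a union bound over $k=0,1,\dots,n$ yields
\[
\mathbb{P}\Big[\max_{0\le k\le n} S_k \ge t\Big] \le (n+1)\,e^{-nt}.
\]
Setting $t = C\log n/n$ makes the right-hand side equal to $(n+1)\,n^{-C}$. Choosing the absolute constant $C$ large enough — for instance $C=5$ suffices, since then $(n+1)n^{-C}\le n^{-2}$ holds for all $n\ge 2$ because $n+1\le n^3$; and for $n=1$ the claim reads $0\ge 0$, as $C\log n/n = 0$ — we conclude
\[
\mathbb{P}\Big[U_{(k+1)}-U_{(k)} < C\frac{\log n}{n}\ \text{for all}\ 0\le k\le n\Big] \ge 1-n^{-2},
\]
which is the assertion of the lemma.

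I do not expect any substantive obstacle; this is a classical concentration estimate for uniform spacings. The only points warranting a little care are (i) pinning down the marginal law of a single spacing, handled either by quoting the exchangeability of uniform spacings or by the covering argument above, and (ii) verifying that one fixed choice of the absolute constant $C$ makes the bound $1-n^{-2}$ valid simultaneously for every $n\ge 1$, including the small-$n$ edge cases where the logarithmic factor is tiny or zero. Both are routine.
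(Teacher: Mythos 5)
Your proof is correct and follows essentially the same route as the paper: the marginal tail bound $\mathbb{P}[U_{(k+1)}-U_{(k)}\ge t]=(1-t)^n$ for each spacing, a union bound over the $n+1$ spacings, and a choice of $C$ large enough to beat the factor $n+1$. Your extra care in justifying the spacing law (exchangeability or the covering argument) and in checking the small-$n$ edge cases only fills in details the paper leaves as ``direct computation.''
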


\begin{proof}
Direct computation yields that for all $0 \leq k \leq n$, 
\begin{align}
\mathbb{P}[X_{(k+1)} - X_{(k)} > r ] = (1-r)^n \nonumber 
\end{align}
for all $r \in [0,1]$. By union bound, we have, 
\begin{align}
\mathbb{P}[U_{(k+1)} - U_{(k)} > C \frac{\log n}{n} \,\, \mathrm{for \, some} \,\, 0\leq k \leq n] \leq (n+1) \Big(1- C \frac{\log n}{n}\Big)^n \leq \frac{1}{n^2}  \nonumber 
\end{align}
for $C>0$ is sufficiently large. This completes the proof. 
\end{proof}

\begin{remark}
\label{rem:gaps_union}
We will apply Lemma \ref{lem:gaps} to the setting where $U_1, \cdots, U_n \sim \Unif(A)$ i.i.d., where $A \subseteq \mathbb{R}$ is a disjoint union of intervals. We let $ \min\{x \in A\} = U_{(0)} < U_{(1)}< \cdots < U_{(n)} < U_{(n+1)} =\max\{x \in A\}$ denote the corresponding order statistics. Finally, one can construct an interval of length $\mathrm{Leb}(A)$ by ``arranging" its constituent intervals in a contiguous fashion; for $x,y \in A$, let $d_A(x,y)$ denote the distance between $x,y$ on this re-arranged interval. With this convention, one can derive the following immediate corollary of Lemma \ref{lem:gaps}. 
\begin{align}
    \mathbb{P}\Big[d_A(U_{(k+1)}, U_{(k)}) < C \frac{ \log n}{n} \mathrm{for\, all}\, 0 \leq k \leq n \Big] \geq 1 - \frac{1}{n^2}. \nonumber 
\end{align}
We will use Lemma \ref{lem:gaps} in this specific form. 
\end{remark}

\begin{lemma}
\label{lem:bin_dev} 
Let $X \sim \mathrm{Bin}(n,p)$. For any $\delta>0$, there exists $C:=C(\delta)$ such that $\mathbb{P}(|X- np| > \delta np ) \leq 2 \exp(-C np)$. 
\end{lemma}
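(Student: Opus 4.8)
The plan is to apply the standard exponential (Chernoff) moment bound to the two tails separately. Write $\mu = np$ and start from the union bound
\[
\mathbb{P}(|X-\mu| > \delta\mu) \le \mathbb{P}\big(X > (1+\delta)\mu\big) + \mathbb{P}\big(X < (1-\delta)\mu\big),
\]
so it suffices to bound each term by $\exp(-c\mu)$ for some strictly positive $c = c(\delta)$ and then take $C(\delta)$ to be the smaller of the two resulting exponents (with a factor $2$ to absorb the two terms).

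For the upper tail I would use the identity $\mathbb{E}[e^{tX}] = (1-p+pe^t)^n$ together with the elementary inequality $1+x \le e^x$, which gives $\mathbb{E}[e^{tX}] \le \exp\big(np(e^t-1)\big) = \exp\big(\mu(e^t-1)\big)$ for every $t \ge 0$. Markov's inequality applied to $e^{tX}$ then yields, for $t \ge 0$,
\[
\mathbb{P}\big(X \ge (1+\delta)\mu\big) \le \exp\Big(\mu\big(e^t - 1 - t(1+\delta)\big)\Big),
\]
and the choice $t = \log(1+\delta) \ge 0$ minimizes the bracket, giving $\mathbb{P}(X \ge (1+\delta)\mu) \le \exp(-h_+(\delta)\,\mu)$ with $h_+(\delta) := (1+\delta)\log(1+\delta) - \delta$. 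Since $h_+(0)=0$ and $h_+'(\delta) = \log(1+\delta) > 0$ for $\delta > 0$, the rate $h_+(\delta)$ is strictly positive.

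For the lower tail I would first observe that when $\delta \ge 1$ the event $\{X < (1-\delta)\mu\}$ is empty, because $X \ge 0$ while $(1-\delta)\mu \le 0$, so there is nothing to prove. For $0 < \delta < 1$ the same argument with $t < 0$ gives $\mathbb{P}(X \le (1-\delta)\mu) \le \exp\big(\mu(e^t - 1 - t(1-\delta))\big)$, and the optimal choice $t = \log(1-\delta) < 0$ yields $\mathbb{P}(X \le (1-\delta)\mu) \le \exp(-\phi(\delta)\,\mu)$ with $\phi(\delta) := \delta + (1-\delta)\log(1-\delta)$; one checks $\phi(0)=0$ and $\phi'(\delta) = -\log(1-\delta) > 0$ on $(0,1)$, so $\phi(\delta) > 0$.

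Combining the two bounds, one may take $C(\delta) = \min\{h_+(\delta),\phi(\delta)\}$ for $0 < \delta < 1$ and $C(\delta) = h_+(\delta)$ for $\delta \ge 1$, which gives $\mathbb{P}(|X-np| > \delta np) \le 2\exp(-C(\delta)\,np)$ in all cases. There is no genuine obstacle here: this is the textbook multiplicative Chernoff bound, and the only points deserving a line of verification are the vacuity of the lower tail when $\delta \ge 1$ and the strict positivity of the rate functions $h_+$ and $\phi$; if one prefers, the statement can simply be quoted from standard concentration references.
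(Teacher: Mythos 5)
Your proof is correct, but it takes a different route from the paper. The paper disposes of this lemma in one line via Bernstein's inequality: since $\mathrm{Var}(X)=np(1-p)\leq np$, Bernstein gives
\begin{equation*}
\mathbb{P}(|X-np|>\delta np)\;\leq\; 2\exp\Bigl(-\frac{\delta^2 n^2p^2}{np(1-p)+\tfrac{1}{3}\delta np}\Bigr)\;\leq\; 2\exp\Bigl(-\frac{\delta^2}{1+\delta/3}\,np\Bigr),
\end{equation*}
so one can take $C(\delta)=\delta^2/(1+\delta/3)$ explicitly. You instead derive the multiplicative Chernoff bound from scratch: union bound over the two tails, the MGF estimate $\mathbb{E}[e^{tX}]\leq\exp(np(e^t-1))$, Markov, and optimization of $t$, yielding the rate functions $h_+(\delta)=(1+\delta)\log(1+\delta)-\delta$ and $\phi(\delta)=\delta+(1-\delta)\log(1-\delta)$, with the correct observation that the lower tail is vacuous when $\delta\geq 1$. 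Your computations check out (the optimal choices $t=\log(1\pm\delta)$ and the positivity arguments for $h_+$ and $\phi$ are right, and replacing the strict inequality in the event by $\geq$ only helps), and your rates are in fact slightly sharper than Bernstein's in the exponent. What the paper's route buys is brevity — a single appeal to a standard inequality with an explicit constant, which is all the regret analysis needs — while yours buys self-containedness and the optimal large-deviation exponent; either suffices for the lemma as stated.
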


\begin{proof}
This will follow by a direct application of Bernstein's inequality. We have, 
\begin{align*}
\mathbb{P}(|X- np| > \delta np ) \leq 2 \exp\Big( - \frac{\delta^2 n^2 p^2}{np(1-p) + \frac{1}{3} \delta np} \Big)\leq 2 \exp(- C np). 
\end{align*}
\end{proof}

\bibliographystyle{plain}
\bibliography{references}

\end{document}